	\theoremstyle{plain}
		\newtheorem{mainthm}{\textsc{Theorem}}		
		\newtheorem{thm}{Theorem}[section]	
		\newtheorem{cor}[thm]{Corollary}	
		\newtheorem{maincor}{\textsc{Corollary}}		%
		\newtheorem{lem}[thm]{Lemma}		
		\newtheorem{prop}[thm]{Proposition}
	\theoremstyle{definition}
		\newtheorem{defn}[thm]{Definition}	
	\theoremstyle{remark}
		\newtheorem{rem}[thm]{Remark}		
		\newtheorem{note}[thm]{Notation}		
\numberwithin{equation}{section}	
\newcommand{\cfsa}{\mathcal{CF}^{sa}}
\newcommand{\trasp}[1]{{#1}^\mathsf{T}}	
\newcommand{\MM}{ M}		
\newcommand{\traspm}[1]{{#1}^\mathsf{-T}}		
\newcommand{\iMor}{\mathrm{n_-}}		
\newcommand{\R}{\mathbf{R}}	
\newcommand{\C}{\mathbf{C}}		
\newcommand{\Q}{\mathbf{Q}}		
\newcommand{\circo}{\mathbb{S}}		
\newcommand{\U}{\mathbf{U}}		
\newcommand{\OO}{\mathrm{O}}		
\newcommand{\irel}{I}		
\newcommand{\ispec}{\iota_{\textup{spec}}}		
\newcommand{\ispecomega}{\iota^\omega_{\textup{spec}}}		
\newcommand{\igeo}{\iota_{\textup{geo}}}		
\newcommand{\Sp}{\mathrm{Sp}}
\newcommand{\Lagr}{\Lambda}
\newcommand{\Ddt}{\tfrac{\mathrm{D}}{\mathrm{d}t}}
\newcommand{\Ddtt}{\tfrac{\mathrm{D}^2}{\mathrm{d}t^2}}
\newcommand{\Real}{\mathrm{Re}}
\newcommand{\Lin}{\mathrm{Lin}}
\newcommand{\Graph}{\mathrm{Gr\,}}
\newcommand{\dist}{\mathrm{dist}}
\newcommand{\Gr}{\mathrm{Gr}}
\newcommand{\im}{\mathrm{rge}\,}
\newcommand{\ind}{\mathrm{ind}\,}
\newcommand{\norm}[1]{\left\| #1 \right\|}			
\newcommand{\abs}[1]{\left\lvert #1 \right\rvert}				
\renewcommand{\L}{L}	
\newcommand{\N}{\mathbb{N}}		
\newcommand{\iMas}{\mu_{\scriptscriptstyle{\mathrm{Mas}}}}
\newcommand{\iRel}{\mu_{\scriptscriptstyle{\mathrm{Rel}}}}
\newcommand{\iCLM}{\mu^{\scriptscriptstyle{\mathrm{CLM}}}}
\newcommand{\Z}{\mathbb{Z}}		
\newcommand{\coindex}{\mathrm{n_+}}
\newcommand{\iiindex}{\mathrm{n_-}}
\newcommand{\iindex}[1]{\mu_{\scriptscriptstyle{\mathrm{Mor}}}\left[#1\right]}
\newcommand{\coiindex}[1]{\mathrm{n_+}\left[#1\right]}
\newcommand{\ssgn}[1]{\sgn\left[#1\right]}
\newcommand{\noo}[1]{\overset {\mbox{%
\lower1pt\hbox{${\scriptscriptstyle o}$}}}n^{\mbox{%
\lower2pt\hbox{$\scriptscriptstyle #1$}}}}
\newcommand{\nnull}[1]{\mathrm{n_0}\left[#1\right]}
\newcommand{\nind}[1]{\mathrm{n_-}\left[#1\right]}
\newcommand{\ncind}[1]{\mathrm{n_+}\left[#1\right]}
\DeclareMathOperator{\spfl}{sf}			
\DeclareMathOperator{\sgn}{sgn}		
\DeclareMathOperator{\rk}{rank}		
\renewcommand{\leq}{\leqslant}
\renewcommand{\geq}{\geqslant}
\renewcommand{\hat}{\widehat}
\renewcommand{\=}{\coloneqq}			
\newcommand{\email}[1]{\href{mailto:#1}{\textsf{#1}}}
\newcommand{\Id}{I}
\newcommand{\X}{\mathcal{X}}               
\title{Instability of semi-Riemannian closed geodesics}
\author{Xijun Hu\thanks{The author is partially supported by NSFC (No.11425105 No. 11790271).},
Alessandro Portaluri
\thanks{The
author is partially supported by the project ERC Advanced Grant 2013
No.~339958 ``Complex Patterns for Strongly Interacting Dynamical Systems ---
COMPAT”, by Prin 2015 ``Variational methods, with applications to
problems in mathematical physics and geometry” No.~$\mathrm{2015KB9WPT\_001}$,
by Ricerca locale 2016 Obem$\_$Rilo$\_$16$\_$01.} , Ran Yang }
\date{\today}
\date{\today}
\begin{document}
 \maketitle

\begin{abstract}

A celebrated result due to  Poincaré affirms that a  closed non-degenerate minimizing geodesic $\gamma$ on an oriented Riemannian surface is hyperbolic. Starting from this classical theorem,  our first main result is  a general instability criterion for timelike and spacelike closed semi-Riemannian geodesics on  both oriented and non-oriented manifolds. A key role is played by  the spectral index, a  new topological invariant that we define through  the spectral flow (being the Morse index truly infinite) of a path of selfadjoint Fredholm operators. A major step in the proof of this result is a {\em new\/} spectral flow formula.

Bott's iteration formula, introduced in \cite{Bot56},  relates in a clear  way the Morse index of an iterated closed Riemannian geodesic and the so-called $\omega$-Morse indices.
Our second result is a semi-Riemannian generalization of the famous
Bott-type iteration formula in the case of closed (resp. timelike closed) Riemannian (resp. Lorentzian) geodesics.

Our last result is a strong instability result obtained by controlling the Morse index of the geodesic and of all of its iterations.

\vskip0.2truecm
\noindent
\textbf{AMS Subject Classification: 58E10, 53C22, 53D12, 58J30.}
\vskip0.1truecm
\noindent
\textbf{Keywords:} Closed Geodesics, Semi-Riemannian manifolds,
Linear Instability,   Maslov index, Spectral flow, Bott iteration formula.
\end{abstract}

\tableofcontents

\section{Introduction}\label{sec:intro}

A celebrated  result proved
by Poincaré in the beginning  of the last century
put on evidence the relation intertwining  the {\em (linear and exponential) instability\/} of a
closed geodesic (as a critical point of the geodesics energy functional on the free loop space) of an oriented Riemannian two-dimensional manifold and its  Morse index. The literature on this criterion is quite broad.  We refer the interested reader to \cite{Poi99,  HS10, Bol88, BT10} and references therein.

Loosely speaking,  a closed geodesic $\gamma$
on $\MM$ is termed {\em linearly stable\/}  if the monodromy matrix  associated to
$\gamma$ splits into two-dimensional rotations. Accordingly, it is diagonalizable and  all Floquet multipliers
belong to  the unit circle $\U$  of the complex plane $\C$. Additionally,
if $1$ is not a Floquet multiplier, we term $\gamma$  {\em non-degenerate\/}. Thus, if $\gamma$ is a stable closed geodesic, then all orbits of the geodesic flow
near $\dot \gamma$ in $T_\gamma\MM$ stay near $\dot \gamma$ for all times. (Cf. Figure \ref{fig:geodetiche}).
\begin{figure}[ht]
 \centering
 \includegraphics[scale=0.15]{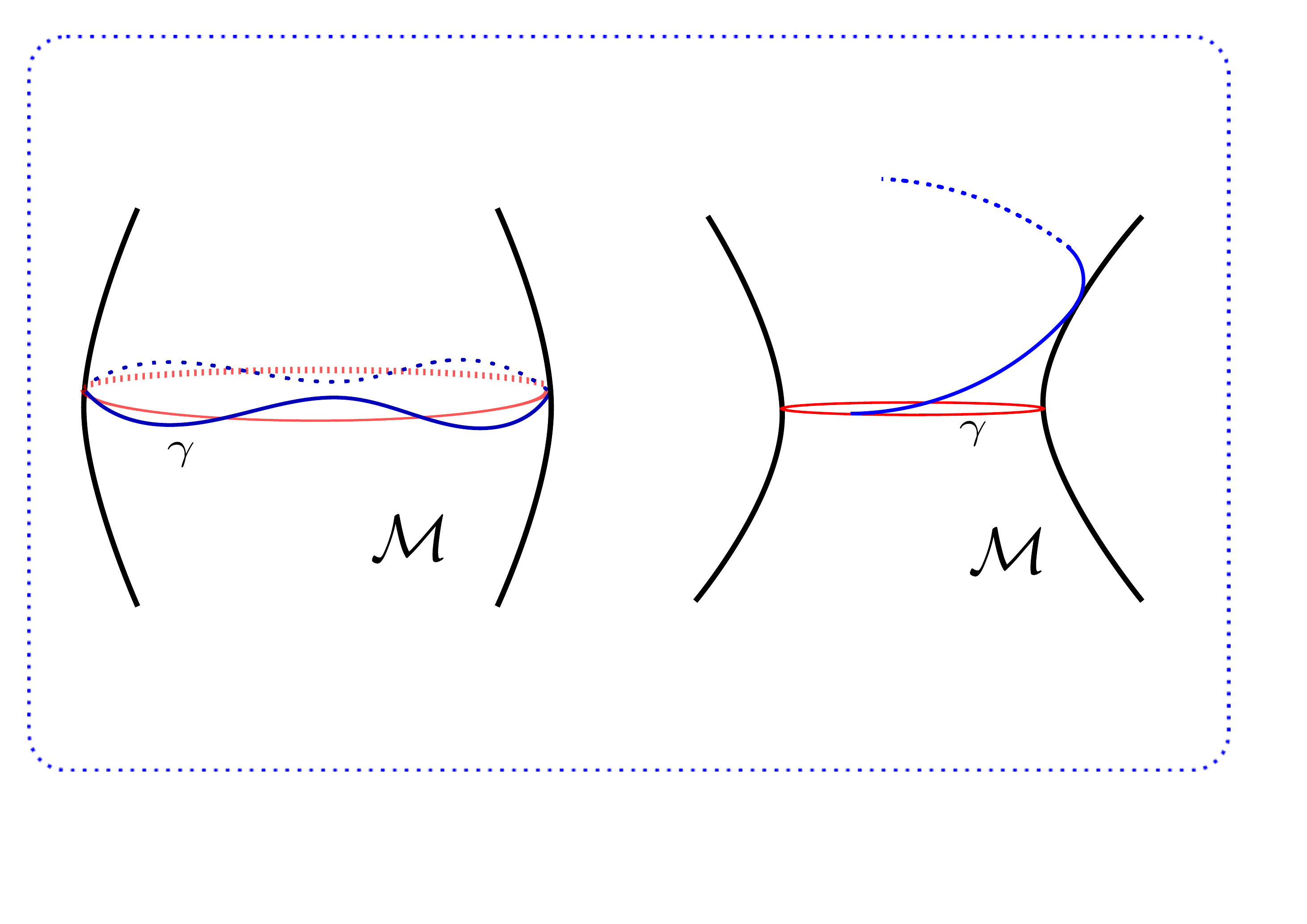}
 \caption{{\color{blue}{Stable}} closed geodesics:{\color{blue}{ nearby
geodesics}} stay in a neighborhood
 of the {\color{red}{closed geodesic $\gamma$}}. Then the
linearized  Poincaré map is diagonalizable and any eigenvalue $\lambda$ is on the unit circle.
 {\color{blue}{Unstable}} closed geodesics: {\color{blue}{nearby geodesics}} have the tendency to diverge. (Image credit to  Hans-Bert Rademacher for the picture available at pag.57 of  \url{http://www.math.uni-leipzig.de/~rademacher/tianjin.pdf}). }\label{fig:geodetiche}
\end{figure}

In  1988 the  Poincaré instability criterion for closed geodesics
was generalised  in several interesting directions by
D. V. Treschev in \cite{Tres88}.  In this paper the author describes  the connection intertwining
the {\em Morse index\/} of a closed non-degenerate Riemannian geodesic
$\gamma$ and the spectrum of the Poincaré map.  More precisely, denoting by $\iMor(\gamma)$
the Morse index of $\gamma$ as a critical
point of the geodesic energy functional on the free loop space of the
$(n+1)$-dimensional Riemannian manifold $\MM$,    the author proved that
if either  $\gamma$ is  a non-degenerate oriented closed geodesic such that
$n+\iMor(\gamma)$ is odd or  $\gamma$ is nonoriented non-degenerate closed
geodesic and $n+\iMor(\gamma)$ is even then $\gamma$ is linearly unstable. Several years later,  the first author and his collaborator in \cite{HS10} proved  a
generalization of the aforementioned  result, dropping the non-degeneracy assumption.  Of the same flavor is a
very recent instability
result proved by the first author and his collaborator in the case of
Hamiltonian systems. (We
refer the interested reader to \cite[Subsection 4.3, pag. 762]{HS09}).
Due to its interest in dynamical systems, a big effort has been given in the
investigation of
stability properties of closed geodesics on Riemannian manifolds as well as of
periodic solutions of more general Lagrangian  systems (cf. \cite{LL02,BJP14,
BJP16, HS09}
and references therein) under the standard Legendre convexity condition.

Dropping the positivity
assumption of the metric tensor is a quite challenging task. The first
problem  is that the critical points of the geodesic energy functional, have in general infinite Morse index and  co-index.  However, in this {\em strongly indefinite\/} situation a natural substitute of the Morse index is represented by
a topological invariant known in literature as the {\em spectral flow\/}.

The spectral flow is naturally associated to a path of  selfadjoint Fredholm operators  arising from the second variation along the geodesic $\gamma$. This invariant was introduced by Atiyah, Patodi and Singer in \cite{APS76} in their study of index theory on manifolds
with boundary and since then many interesting properties and applications have been established. (Cf., for instance, \cite{PPT04, MPP07, MPW17, HP17b, PW16} and references therein).  In general the
spectral flow depends on the homotopy class of the whole path and not only on its ends. However in the special case of geodesics on semi-Riemannian manifolds things are simpler since it depends only on the endpoints of the path and therefore it can be considered a relative form of Morse index known in literature as  {\em relative Morse index.\/}

It is well-known that this invariant is strictly related to a  symplectic
invariant known in literature as
{\em Maslov index\/}; by using these two integers, several  generalization of
the celebrated
Morse Index Theorem are available. (We refer the interested reader to
cf. \cite{APS08, MPP05, MPP07, HS09, GPP03, LZ00a, LZ00b,Por08, RS95} and references
therein). Very recently new spectral flow formulas has been established in the study of heteroclinic and
homoclinic orbits of Hamiltonian systems. (Cfr. \cite{BHPT17, HP17a, HPY17}).
{{\subsection*{Acknowledgements}

  The second named author warmly thanks  all faculties and staff at the Mathematics Department of the Shandong  University (Jinan)  for providing excellent working conditions during his stay. 
  
  We are deeply grateful to the anonymous referees for their careful reading of our manuscript and their insightful comments and suggestions.
}}
\vskip1truecm


\section{Description of the problem and main
results}

The aim of this section is to describe the problem, the main results and to introduce some basic definitions and finally to fix our  notation.

Let $(\MM, \mathfrak g)$ be a $(n+1)$-dimensional semi-Riemannian
manifold, where $\mathfrak g$ is a metric
tensor of  index,  $\iiindex(\mathfrak g)=: \nu \in \{0, \dots, n+1\}$. If $\nu=0$ (resp. $\nu=1$) the
pair $(\MM, \mathfrak g)$  defines a Riemannian (resp. Lorentzian) manifold.
Let  $\nabla$ denote the Levi-Civita connection of $\mathfrak g$ and  let
$\mathcal R$ be the corresponding curvature tensor chosen with the following
sign convention
\begin{equation}
\mathcal R(X,Y)=[\nabla_X, \nabla_Y]- \nabla_{[X,Y]}
\end{equation}
where $[\nabla_X, \nabla_Y]\= \nabla_X \nabla_Y-\nabla_Y \nabla_X$. Denoting  by $\Ddt$ the covariant derivative, we recall that
a {\em closed geodesic\/} on $M$ is a smooth solution $\gamma\colon
[0,T] \rightarrow M$ of  the following  problem
\begin{equation}\label{eq:geo}
\begin{cases}
 \Ddt \dot \gamma(t)=0, \qquad t \in [0,T]\\
 \gamma(0)= \gamma(T)\\
 \dot\gamma(0)=\dot \gamma(T)
\end{cases}
 \end{equation}
 where $\cdot $ denotes the time derivative. It is well-known that, if $\gamma:[0,T]\to M$ is a geodesic, then there exists a
constant $e_\gamma$ such that
\begin{equation}\label{eq:energia-intro}
e_\gamma \= \mathfrak g\big(\dot  \gamma, \dot \gamma).
\end{equation}
The value of $e_\gamma$ given in Equation \eqref{eq:energia-intro} determines the causal
character of the geodesic; more precisely,
$\gamma$ is termed {\em timelike, lightlike\/} or {\em spacelike\/} if
$e_\gamma$ is negative, zero or
positive, respectively. Given a (closed) geodesic $\gamma$, a  {\em Jacobi field\/} is a
smooth  vector field $\xi$
along $\gamma$  that satisfies the second order linear differential equation
\begin{equation}\label{eq:jacobi-intro}
-\Ddtt \xi  (t) +\mathcal R(\dot \gamma (t) , \xi  (t)) \dot \gamma (t) =0, \qquad
t\in [0,T].
\end{equation}
We denote by
$\mathfrak P_\gamma: T_{\gamma(0)}M \oplus T_{\gamma(0)}M \to  T_{\gamma(0)}M
\oplus T_{\gamma(0)}M$ the map
 defined by
$
\mathfrak P_\gamma(v, v')= \left(\xi(T), \Ddt \xi(T)\right)
$
where $\xi$ is the unique Jacobi field along $\gamma$ such that $\xi(0)=v $ and $\Ddt \xi(0)=v'$.
We  assume that $\gamma$ is a closed spacelike (resp. timelike) geodesic, namely for
any $t\in[0,T]$, $\mathfrak g(\dot{\gamma}(t),\dot{\gamma}(t))=\abs{\dot \gamma(t)}^2$
(resp. $\mathfrak g(\dot{\gamma}(t),\dot{\gamma}(t))=-\abs{\dot \gamma(t)}^2$ ) . We let $e_{0}(0)\=\frac{\dot{\gamma}(0)}{|\dot{\gamma}(0)|}$ if $\gamma$ is spacelike
and $e_{n-\nu+1}(0)\=\frac{\dot{\gamma}(0)}{|\dot{\gamma}(0)|}$ if $\gamma$ is timelike. Then we can find $n$ linearly
independent  $\mathfrak g$-orthonormal vectors in $T_{\gamma(0)}M$ such that the following set
 \begin{multline}
 \big\{e_{1}(0),\ldots,e_{n}(0)\big\} \textrm{ in the spacelike case } \\ \big(\textrm{resp. } \big\{e_{0}(0),\ldots,e_{n-\nu}(0), e_{n-\nu+2}(0),\ldots,e_{n}(0)\big\}
\textrm{ in the timelike case}\big)
 \end{multline}
   is a basis of the subspace $\displaystyle
N_{\gamma(0)}M=\Set{v\in T_{\gamma(0)}M\mid \mathfrak g(v,\dot{\gamma}(0))=0}.$
Since parallel transport along $\gamma$ is
a $\mathfrak g$-isometry, then it follows that $\{e_{1}(t),\ldots,e_{n}(t)\}$ is a $\mathfrak g$-orthonormal basis of $N_{\gamma(t)}M$ in the spacelike case whilst  $\{e_{0}(t),\ldots,e_{n-\nu}(t), e_{n-\nu+2}(t), \ldots, e_n(t)\}$ is a $\mathfrak g$-orthonormal basis of $N_{\gamma(t)}M$ in the timelike case.
Let us denote by $\mathfrak P_\gamma^{\perp_\mathfrak g}:
N_{\gamma(0)} \oplus N_{\gamma(0)}\to N_{\gamma(0)} \oplus N_{\gamma(0)}$ the restriction to the
normal subspace at $\gamma(0)$ of  the
linearized {\em Poincaré map\/} defined by $
 \mathfrak P_\gamma^{\perp_\mathfrak g}(v, v')= \left(\xi(T), \Ddt \xi(T)\right)$
where $\xi$ is the unique Jacobi field $\mathfrak g$-orthogonal to $\dot \gamma$
such that
$\xi(0)=v$ and $\Ddt \xi(0)= v'$.
\begin{defn}\label{def:stability-intro}
A closed semi-Riemannian geodesic $\gamma$  is termed
{\em linearly stable\/} if the linearized Poincaré map $\mathfrak P_\gamma^{\perp_\mathfrak g}$
is semisimple and its spectrum $\sigma(\mathfrak P_\gamma^{\perp_\mathfrak g}) $ lies on the unit circle $\U$ of the
complex plane. Otherwise it is termed {\em linearly unstable\/}.
\end{defn}
 If
 $\gamma \in \Lambda\MM$  is a (non-constant) closed geodesic in $(\MM, \mathfrak g)$
we can introduce, by means of the parallel transport,  a  trivialization of the
pull-back bundle $\gamma^{*}(T\MM)$ of $T\MM$ along $\gamma$
by choosing a parallel $\mathfrak g$-orthonormal frame $\mathfrak E$
along $\gamma$ given by $(n+1)$-parallel linearly independent vector fields
$ e_0, \dots  e_n$ and by means of this parallel
trivialization,  the metric tensor $\mathfrak g$ reduces to the constant
indefinite scalar product $g$ in $\R^{n+1}$ of constant index $\nu$.
Writing the Jacobi  vector field along $\gamma$ in local coordinates as $\xi(t)=\sum_{i=0}^{n}u_{i}(t)e_{i}(t)$, inserting the above expression
into the Equation \eqref{eq:jacobi-intro} and by taking the $\mathfrak g$-scalar product with $e_j$, we reduce
it to the linear second order system of ordinary differential equations
\begin{equation}\label{eq:Space-Original-intro}
 -\bar{G}\ddot u (t) + \bar{R}(t)u(t)=0, \qquad t \in [0,T]
\end{equation}
where $\bar R_{ij}\= \mathfrak g\big(\mathcal R(\dot \gamma, e_i) \dot \gamma, e_j\big)$ and $\bar{G}= \begin{bmatrix}I_{n+1-\nu} & 0\\0 &-\Id_\nu \end{bmatrix}$. Being the map $\mathcal R(\dot \gamma, \cdot) \dot\gamma $
$\mathfrak g$-symmetric, it follows that the matrix $\bar R=[\bar R_{ij}]_{i,j=0}^n$ is symmetric; moreover in the spacelike case,  
$ \bar{R}_{0i}(t)=\bar{R}_{i0}(t)=0$ for any $i=0,\dots,n$.
Since the Jacobi field $\xi$ is
$T$-periodic, inserting the local expression of $\xi$ into the equation $\xi(0)=\xi(T)$, we get the following boundary
condition for $u$:
\begin{equation}
u(0)=\bar A\,u(T)   \textrm{ where }  \bar{A}=[a_{ij}]_{i,j=0}^n.
\end{equation}
It is worth to observe that, since $\dot{\gamma}(0)=\dot{\gamma}(T)$, then we have $a_{00}=1$ and $a_{0j}=0$ for any $j=1,\dots,n$. Furthermore, by a direct
computation it follows also  that $\bar A$ is $\mathfrak g$-orthogonal.
By construction, the Morse-Sturm system  given in Equation \eqref{eq:Space-Original-intro} decouples into a scalar differential
equation (corresponding to the Jacobi field along $\dot \gamma$) and a differential system in $\R^n$
(corresponding to the restriction of the Jacobi deviation equation to vector fields $\mathfrak g$-orthogonal
to $\gamma$). Similarly in the timelike case, we have
\begin{multline}
\bar R_{(n-\nu+1)i}(t)=\bar R_{i(n-\nu+1)}(t)=0  \textrm{ and }\\
a_{(n-\nu+1)(n-\nu+1)}=1, \ a_{(n-\nu+1)j}=0 \textrm{ for } j \neq n-\nu+1.
\end{multline}
Depending on the causal character of $\gamma$, we can distinguish two cases:
\begin{itemize}
\item if $\gamma$ is {\em spacelike\/} then we have
\begin{equation}\label{eq:Space-Reduced-space-intro}
-G \ddot u(t)+\widehat{R}(t)u(t)=0, \qquad t \in [0,T]
\end{equation}
where $u(t)=\trasp{(u_{1}(t),\ldots,u_{n}(t))},  G\= \begin{bmatrix}I_{n-\nu} & 0\\0 &-\Id_\nu \end{bmatrix}$
and $\widehat{R}(t)=[ \bar{R}_{ij}(t)]_{i,j=1}^n$. Correspondingly the matrix $\bar A$ reduces to
\[
A=[a_{ij}]_{i,j=1}^n.
\]
\item if $\gamma$ is {\em timelike\/} then we have
\begin{equation}\label{eq:Space-Reduced-time-intro}
-G \ddot  u(t)+\widehat{R}(t)u(t)=0, \qquad t \in [0,T]
\end{equation}
where $u(t)=\trasp{(u_{0}(t),\dots, u_{n-\nu}(t), u_{n-\nu+2}, \dots, u_{n}(t))},$
$G\ =\begin{bmatrix}I_{n+1-\nu} & 0\\0 &-\Id_{\nu-1} \end{bmatrix}$ and $\widehat{R}(t)=[ \bar{R}_{ij}(t)]_{i,j=0}^n$ and
$i,j \neq n-\nu+1$. In this case the matrix $\bar A$ reduces to
\[
A=[a_{ij}]_{i,j}^n \textrm{ where } i,j =0, \dots, n \textrm{ and } i,j \neq n-\nu+1.
\]
\end{itemize}
Now, we are entitled to introduce the following definition.
\begin{defn}\label{def:oriented}
A closed geodesic $\gamma$ is termed oriented if $\det A= 1$; non-oriented if $\det A= -1$.
\end{defn}
\begin{rem}
We observe that the orientation of a geodesic $\gamma$ is independent either on the signature of $\mathfrak g$ or on the causal character of $\gamma$.
\end{rem}
\begin{note}
In short-hand notation, in the rest of the paper we will denote either a spacelike or a  timelike geodesic with
the same symbol, if not explicitly stated.
\end{note}
Given a spacelike (resp. timelike) closed geodesic $\gamma$, let
$t \mapsto \Psi(t)$ be the {\em flow\/} of the Morse-Sturm system
given in Equation \eqref{eq:Space-Reduced-space-intro} (resp. Equation \eqref{eq:Space-Reduced-time-intro});
thus  for every $t \in [0,T]$, $\Psi$ is the unique linear isomorphism of $\R^n \oplus \R^n$ such that $\displaystyle
 \Psi(t)\big(G\dot u(0), u(0)\big)= \big(G\dot u(t), u(t)\big),$
where $u$ is a solution of Equation \eqref{eq:Space-Reduced-space-intro}
(resp. Equation \eqref{eq:Space-Reduced-time-intro}).
We observe that $\Psi$ is a
smooth curve in the general linear
group of $\R^n \oplus \R^n$ satisfying the matrix differential equation $\dot
\Psi (t)= K(t)\Psi(t)$ with
initial condition $\Psi(0)=\Id$ where $K$ is given by $
K(t)\=\begin{bmatrix}
       0 & \hat{R}(t) \\
       G& 0
      \end{bmatrix}.$
The symmetry of $\hat{R}$ implies that $\Psi$ is actually a (smooth) curve in $\Sp(2n,\R)$.
We denote by $J$  the standard complex structure, given by $\displaystyle
J\= \begin{bmatrix}
       0 & -\Id\\
       \Id&0
      \end{bmatrix}$
where $\Id$ denotes the identity in the appropriate dimension.
Observing that $\trasp{A}G A= G$,  it follows  that the operator
$\displaystyle
A_d\=\begin{bmatrix}
\traspm{A} & 0 \\
0 & A
\end{bmatrix}$
lies in $\Sp(2n, \R)$ being, in fact, $\trasp{A_d}JA_d= J$.
By taking into account the $\mathfrak g$-orthonormal periodic trivialization of
$\gamma^*(TM)$, the induced linearized Poincaré map is given by
\begin{equation}\label{eq:poincare-1}
\mathcal P(T)\=   A_d\Psi(T) \in \Sp(2n, \R).
\end{equation}
\begin{rem}
We observe that, in terms of the the operator $A_d$, a spacelike (resp. timelike) geodesic $\gamma$ is {\em linearly stable\/} if the symplectic  matrix $\mathcal P(T)$ is linearly stable.
\end{rem}
In order to introduce both the geometric and analytic indices, we start to embed the second order self-adjoint differential operator coming out from the Jacobi deviation equation, into a one parameter family of operator. Now,   for any $s \in [0,+\infty)$, we   introduce  the closed (unbounded) operator
$
 \mathcal A_s: \mathscr D(\mathcal A_s) \subset L^2([0,T]; \R^n) \to L^2([0,T];
\R^n)
$
having  domain $\mathscr D(\mathcal A_s)\= \{u\in W^{2,2}([0,T],\R^{n}):
u(0)=Au(T),\dot{u}(0)=A\dot{u}(T)\}$
(independent on $s$) and  defined by
\begin{equation}\label{eq:gli-operatori}
 \mathcal A_s(u)(t)\= -G\dfrac{d^2}{dt^2} + \widehat R(t)+s\,G.
\end{equation}
It is well-known (cf., for instance, \cite{GGK90}) that,
for each $s \in [0,s_0]$ the operator $\mathcal A_s$ is
a closed Fredholm operator and selfadjoint in $L^2([0,T]; \R^n)$.
Since the domain $\mathscr D(\mathcal A_s)$ doesn't depend
on $s$, the path $s \mapsto \mathcal A_s$  can be seen as a path of bounded
Fredholm operators from
$\mathcal W\= W^{2,2}([0,T];\R^n) \cap W^{1,2}_A([0,T];\R^n)$ into $L^2([0,T],
\R^n)$ which  are selfadjoint when
regarded as operators on $L^2$, where
$
 W^{1,2}_A([0,T]; \R^n)\= \{u \in W^{1,2} ([0,T], \R^n): u(0)= A\,u(T)\}.
$
For any $c \in [0,1]$,  $s \in [0,+\infty)$ and for any $\omega \in \U$, we define the operator
\begin{equation}\label{eq:Acs}
\mathcal A^\omega_{c,s}=-G \dfrac{d^2}{dt^2} + c \widehat R(t)+ sG, \qquad t \in [0,T]
\end{equation}
on  the Hilbert  space
\begin{equation}\label{spazio-Acs}
E^2_\omega([0,T])\=\Set{u\in W^{2,2}([0,T], \C^n)| u(0)=\omega A u(T), \  \dot u(0)=\omega A \dot u(T)}.
\end{equation}
As we will prove in Corollary \ref{thm:lemma3}, there exists $s_0$ sufficiently large such that for $s \geq s_0$, the operator
$\mathcal A^\omega_{1,s}$ is non-degenerate.  Now we are entitled to define the spectral index of a
closed geodesic $\gamma$.
\begin{defn}\label{def:spectral-index}
Under the previous notation, we term {\em $\omega$-spectral index\/} of the closed non-lightlike geodesic $\gamma$, the integer
$\ispecomega(\gamma)$ defined by
\begin{equation}\label{eq:spectral-index}
 \ispecomega(\gamma)\=  \spfl(\mathcal A^\omega_{1,s}; s \in [0,s_0]).
\end{equation}
\end{defn}
\begin{rem}
As already observed, since  for $s \geq s_0$, the operator
$\mathcal A^\omega_{1,s}$ is non-degenerate, the spectral index given in Definition \ref{def:spectral-index} is well-defined, i.e. it is independent on $s_0$.
\end{rem}
We now introduce the Hamiltonian system
\begin{equation}\label{eq:Dcs}
\dot z(t)= J D_{c,s}(t) z(t), \qquad t \in [0,T]
\end{equation}
for $\displaystyle
D_{c,s}(t)\=
\begin{bmatrix}
G & 0 \\0 & - c\widehat R(t) -sG
\end{bmatrix}$, $s \in [0,+\infty)$
and we denote by $\Psi_{c,s}$ its fundamental solution.
\begin{defn}\label{def:Maslov-geodesic}
Let $\gamma$ be a closed non-lightlike geodesic and let $\mathcal P_\omega:[0,T] \to \Sp(2n)$ be the path pointwise given by $\mathcal P_\omega(t)\= \omega A_d\Psi_{1,0}(t)$.
We define the {\em $\omega$-geometric index of $\gamma$\/}  as follows
 \begin{equation}\label{eq:indice-geometrico}
  \igeo^\omega(\gamma)\=\iota_1(\mathcal P_\omega(t); t\in [0,T])
 \end{equation}
 where $\iota_1$ is the Maslov-type index (cf. Appendix \ref{sec:appendix}  and references therein for the
definition and the main properties of $\iota_1$).
 \end{defn}
 \begin{note}
 In shorthand notation, we will denote
 \begin{itemize}
\item  the path $\mathcal P_1$ (obtained by setting $\omega=1$) by $\mathcal P$;
\item $\igeo^1(\gamma) $ by $\igeo(\gamma)$;
\item $\ispec^1(\gamma)$ by $\ispec(\gamma)$.
\end{itemize}
 \end{note}
Our first main result reads as follows.
\begin{mainthm}\label{thm:Lemma6}({\bf An $\omega$-spectral flow formula\/})
Under the previous notation, the following spectral flow formula holds:
\[
 \ispecomega(\gamma) +\dim\ker ( A -\omega \Id) =  \igeo^\omega(\gamma).
 \]
\end{mainthm}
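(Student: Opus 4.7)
The plan is to embed the one-parameter path defining $\ispecomega(\gamma)$ into a two-parameter homotopy and exploit the vanishing of spectral flow along a contractible loop to split it into three more tractable pieces.

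\textbf{Step 1 (Rectangular homotopy).} I view $\{\mathcal A^\omega_{c,s}\}_{(c,s)\in [0,1]\times [0,s_0]}$ (see \eqref{eq:Acs}) as a continuous map from a rectangle into the space $\cfsa$ of self-adjoint Fredholm operators on $L^2([0,T];\mathbf C^n)$, each acting on the common dense domain $E^2_\omega([0,T])$. By Corollary~\ref{thm:lemma3}, combined with a uniform perturbation argument, I may enlarge $s_0$ so that $\mathcal A^\omega_{c,s_0}$ is invertible for every $c\in [0,1]$; this is possible since $c\widehat R$ is bounded uniformly in $c\in[0,1]$. The rectangle is contractible, so the spectral flow around its oriented boundary vanishes by homotopy invariance. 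Path additivity then gives
\[
\ispecomega(\gamma)=\spfl(\mathcal A^\omega_{0,s};\,s \in [0, s_0])-\spfl(\mathcal A^\omega_{c,0};\,c \in [0,1])+\spfl(\mathcal A^\omega_{c,s_0};\,c \in [0,1]),
\]
and the third summand vanishes by invertibility along the top edge.

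\textbf{Step 2 (Bottom edge: Sturm--Maslov duality).} The path $c\mapsto\mathcal A^\omega_{c,0}$ is a Morse--Sturm family with $\omega A$-twisted periodic boundary conditions. Under the symplectic change of variable $z=(G\dot u, u)$, the kernel of $\mathcal A^\omega_{c,0}$ is canonically identified with the intersection of the diagonal of $\mathbf R^{2n}\oplus\mathbf R^{2n}$ with the graph of the symplectic matrix $\mathcal P_\omega(T)=\omega A_d\Psi_{c,0}(T)$ attached to the Hamiltonian system \eqref{eq:Dcs} at $s=0$. A standard crossing-form computation identifies the Sturm crossing form, at every $c$-zero, with the symplectic crossing form of the associated Lagrangian path, up to sign. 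Invoking the duality between spectral flow and Maslov-type index recalled in the Appendix for the $\iota_1$-index gives
\[
-\spfl(\mathcal A^\omega_{c,0};\,c \in [0,1])=\iota_1(\mathcal P_\omega(t);\,t \in [0,T])=\igeo^\omega(\gamma).
\]

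\textbf{Step 3 (Left edge: the endpoint correction).} For $c=0$ the family reduces to the constant-coefficient operator $\mathcal A^\omega_{0,s}=-G\tfrac{d^2}{dt^2}+sG$, which diagonalizes in a Fourier-type basis adapted to the twisted boundary matrix $\omega A$. All eigenvalues are monotone in $s$, so every zero-crossing accumulates at the degenerate initial point $s=0$. A direct computation identifies $\ker(\mathcal A^\omega_{0,0})$ with the space of constant modes $u(t)\equiv a$ with $a\in\ker(I-\omega A)$, of complex dimension $\dim\ker(A-\omega\Id)$ (using conjugacy of the spectrum of the real matrix $A$ on $\U$). Combining this with the first-order variation $\partial_s\mathcal A^\omega_{0,s}|_{s=0}=G$, a sign bookkeeping using the $G$-orthogonality of $A$ gives
\[
\spfl(\mathcal A^\omega_{0,s};\,s \in [0, s_0])=-\dim\ker(A-\omega\Id).
\]

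\textbf{Step 4 (Assembly).} Substituting Steps 2 and 3 into the identity of Step 1 produces
\[
\ispecomega(\gamma)=\igeo^\omega(\gamma)-\dim\ker(A-\omega\Id),
\]
which is equivalent to the claimed formula. The main obstacle is the sign bookkeeping in Step 3: because $G$ is indefinite, the pairing $\langle G\cdot,\cdot\rangle$ restricted to $\ker(I-\omega A)\subset\mathbf C^n$ is \emph{a priori} indefinite, and one must exploit the $G$-orthogonality of $A$ together with $\omega\in\U$ to show that the contribution of the degenerate endpoint to the spectral flow is the full scalar $\dim\ker(A-\omega\Id)$ rather than a signature-weighted count.
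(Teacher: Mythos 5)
Your Step~1 decomposition of $\ispecomega(\gamma)$ along the boundary of the $(c,s)$-rectangle is correct, and it parallels the rectangular-homotopy scheme the paper uses on the Maslov-index side. However, Steps~2 and~3 are each individually false as stated, although the two errors cancel in the final assembly. The ``same-parameter'' duality of the type the paper proves in Proposition~\ref{thm:lemma4} (crossing forms of the Sturm path against crossing forms of the Lagrangian path) identifies $-\spfl(\mathcal A^\omega_{c,0};\,c\in[0,1])$ with $\iota_1\big(\omega A_d\Psi_{c,0}(T);\,c\in[0,1]\big)$, which is a Maslov index in the parameter $c$. Passing from this to the $t$-parametrized index $\igeo^\omega(\gamma)=\iota_1\big(\omega A_d\Psi_{1,0}(t);\,t\in[0,T]\big)$ requires a second homotopy in the $(c,t)$-rectangle at $s=0$, and this introduces the extra edge $\iota_1\big(\omega A_d\Psi_{0,0}(t);\,t\in[0,T]\big)$, which you implicitly assume to be zero. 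It is not zero in general: already for $n=1$, $G=-1$, $A=\Id$, $\omega=1$ the path $t\mapsto\begin{bmatrix}1&0\\-t&1\end{bmatrix}$ has odd $\iota_1$-index, because $e^{-\varepsilon J}$-perturbation carries the left endpoint into $\Sp(2,\R)^+_1$ and the right endpoint into $\Sp(2,\R)^-_1$. Exactly the same term, with opposite sign, is missing from your Step~3: using the duality of Proposition~\ref{thm:lemma4} together with Proposition~\ref{thm:lemma5} one finds $\spfl(\mathcal A^\omega_{0,s};\,s\in[0,s_0])=\iota_1\big(\omega A_d\Psi_{0,0}(t);\,t\in[0,T]\big)-\dim\ker(A-\omega\Id)$, not $-\dim\ker(A-\omega\Id)$.

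The obstacles you flag at the end of Step~3 are genuine and not merely bookkeeping. For indefinite $G$, the family $s\mapsto\mathcal A^\omega_{0,s}$ is \emph{not} monotone, and extra crossings arise at $s_*>0$ whenever $\bar\omega e^{\pm\sqrt{s_*}T}$ hits an eigenvalue of $A$; those crossings contribute $0$ (because the eigenspaces of $A$ for eigenvalues off the unit circle are $G$-isotropic, so the crossing form has signature~$0$), but this is a fact one has to prove. More importantly, the endpoint contribution at $s=0$ in the crossing-form formula is $-\iMor\!\left[\int_0^T\langle Gu,u\rangle\,dt\right]$ on $\ker\mathcal A^\omega_{0,0}$, which is an \emph{index}, not a dimension; since $\ker\mathcal A^\omega_{0,0}$ contains modes with a $bt$-component and the restriction of $G$ to $\ker(A-\bar\omega\Id)$ is indefinite, this cannot equal $\dim\ker(A-\omega\Id)$ in general. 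The paper's Proposition~\ref{thm:lemma5} sidesteps precisely this degeneracy by computing the $t$-Maslov index at $c=0$ with $s=s_0$ large (where the interior of the path is nondegenerate and the only crossing is the single endpoint $t=0$, analyzed via the symmetric matrix $M_{s_0}(t)$ of Lemmas~\ref{thm:lemma0-2}--\ref{thm:lemma1-2}). To repair your argument you either need to establish $\iota_1\big(\omega A_d\Psi_{0,0}(t);\,t\in[0,T]\big)=0$ --- which is false --- or to carry the correction term through both Steps~2 and~3 explicitly, at which point you have essentially reconstructed the paper's route through the auxiliary rectangle at $s=s_0$.
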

As a  direct consequence of the $\omega$-spectral flow formula, we immediately get  a new Morse-type Index Theorem.
\begin{maincor}\label{thm:Lemma7}({\bf A Morse Index Theorem \/})
Under the previous notation, we have
\[
\ispec(\gamma)+\dim\ker(A-\Id)= \igeo(\gamma).
\]
\end{maincor}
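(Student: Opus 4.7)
The plan is to obtain the identity as an immediate specialization of the $\omega$-spectral flow formula in Main Theorem \ref{thm:Lemma6}. That theorem asserts
\[
\ispecomega(\gamma) + \dim\ker(A - \omega \Id) = \igeo^\omega(\gamma)
\]
for every $\omega \in \U$. Choosing $\omega = 1$ and invoking the shorthand conventions fixed in the note just before the statement of the Main Theorem, namely $\ispec^1(\gamma) = \ispec(\gamma)$ and $\igeo^1(\gamma) = \igeo(\gamma)$, together with the tautological identity $\dim\ker(A - 1\cdot\Id) = \dim\ker(A-\Id)$, one recovers the claimed equality verbatim. No further computation is required.

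The only point that deserves an explicit (but routine) verification is the coherence of the underlying functional-analytic setup at $\omega=1$: the Hilbert space $E^2_1([0,T])$ introduced in \eqref{spazio-Acs} reduces exactly to the space of $W^{2,2}$ functions satisfying the twisted periodic conditions $u(0) = A u(T)$ and $\dot u(0) = A\dot u(T)$, which is precisely the domain $\mathscr D(\mathcal A_s)$ used to define $\ispec(\gamma)$ through \eqref{eq:gli-operatori} and \eqref{eq:spectral-index}. Thus there is no incompatibility between the two settings and no genuine obstacle to overcome; the corollary should be read as the natural Morse-type reformulation of the stronger $\omega$-index theorem, with the defect term $\dim\ker(A-\Id)$ measuring exactly the failure of non-degeneracy at the endpoint of the spectral flow path.
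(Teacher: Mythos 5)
Your argument is exactly the paper's: the corollary follows immediately by taking $\omega=1$ in Theorem \ref{thm:Lemma6} and invoking the shorthand conventions $\ispec^1 = \ispec$, $\igeo^1 = \igeo$. The additional remark about the coherence of the domains at $\omega=1$ is a harmless sanity check that the paper leaves implicit.
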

\begin{proof}
The proof of this result immediately follows by setting $\omega=1$. \end{proof}
We denote by $\gamma^{(m)}:[0,mT]\to M$ the $m$-th iteration of the  geodesic $\gamma$, defined by
\[
\gamma^{(m)}(t)\= \gamma(t-jT), \quad jT \leq t \leq (j+1)T, \ j=0, \dots, m-1.
\]
Another  contribution of the present paper is  represented by a sort of  semi-Riemannian version of the Bott-type iteration formula which plays a crucial role in the instability criteria that we shall prove. It is worth to
note that, with respect to the classical case,  in our framework the  Legendre convexity condition does not hold.
\begin{mainthm}({\bf Semi-Riemannian Bott-type formula\/}) \label{thm:Bott-SR}
Let $(M, \mathfrak g)$ be a semi-Riemannian manifold and $\gamma$ be a closed
non-lightlike closed geodesic. For any $m \in \N$, the
following iteration formula holds
\begin{equation}\label{eq:Bott-equation}
\ispec\big(\gamma^{(m)}\big)=\sum_{\omega^m=1} \ispecomega(\gamma)
\end{equation}
\end{mainthm}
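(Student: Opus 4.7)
The plan is to realize the spectral index of the iterated geodesic $\gamma^{(m)}$ as a direct sum over $m$-th roots of unity of the $\omega$-spectral indices of $\gamma$, exploiting the fact that the Morse-Sturm operator for $\gamma^{(m)}$ on $[0,mT]$ has $T$-periodic coefficients. More precisely, by definition
\[
\ispec(\gamma^{(m)})=\spfl\bigl(\mathcal{A}^{(\gamma^{(m)})}_{s};\,s\in[0,s_{0}]\bigr),
\]
where $\mathcal{A}^{(\gamma^{(m)})}_{s}=-G\tfrac{d^{2}}{dt^{2}}+\widehat{R}(t)+sG$ acts on
$\mathscr{D}^{(m)}\=\{v\in W^{2,2}([0,mT],\C^{n}):\,v(0)=A^{m}v(mT),\ \dot v(0)=A^{m}\dot v(mT)\}$,
with $\widehat{R}$ extended $T$-periodically. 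The complexified problem is insensitive to passing from $\R$ to $\C$, so I will freely work over $\C$ to make the Fourier decomposition below available.

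The key construction is a Floquet-type isomorphism of Hilbert spaces
\[
\Theta\colon\bigoplus_{\omega^{m}=1}E^{2}_{\omega}([0,T])\;\longrightarrow\;\mathscr{D}^{(m)},\qquad
\bigl(\Theta\,(u_{\omega})_{\omega}\bigr)(t+jT)\=\tfrac{1}{\sqrt{m}}\sum_{\omega^{m}=1}(\omega A)^{-j}u_{\omega}(t),
\]
for $t\in[0,T]$ and $j=0,\dots,m-1$. I first check that, for each $\omega$ separately, the block $(\iota_{\omega}u)(t+jT)=(\omega A)^{-j}u(t)$ glues smoothly at the inner nodes $t=jT$ precisely because $u\in E^{2}_{\omega}([0,T])$ (so that $u(0)=\omega A\,u(T)$ and similarly for $\dot u$); the outer boundary condition reads $(\iota_{\omega}u)(mT)=(\omega A)^{-m}u(0)=A^{-m}u(0)$, i.e.\ $(\iota_\omega u)(0)=A^m(\iota_\omega u)(mT)$, which is the $A^m$-periodic condition. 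Bijectivity of $\Theta$ is the discrete Fourier inversion on $\Z/m\Z$ applied to the shift-twist operator $U\colon v(t)\mapsto A\,v(t-T)$ acting on $\mathscr{D}^{(m)}$: $U^{m}=\Id$, its spectrum is $\{\omega:\omega^m=1\}$, and the $\omega$-eigenspace is exactly $\iota_{\omega}(E^{2}_{\omega}([0,T]))$.

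The second step is to verify that $\Theta$ intertwines the two families of operators, namely
\[
\Theta^{-1}\circ\mathcal{A}^{(\gamma^{(m)})}_{s}\circ\Theta\;=\;\bigoplus_{\omega^{m}=1}\mathcal{A}^{\omega}_{1,s}.
\]
This reduces to the identity $\mathcal{A}^{(\gamma^{(m)})}_{s}\circ\iota_{\omega}=\iota_{\omega}\circ\mathcal{A}^{\omega}_{1,s}$, which holds block by block because $\widehat{R}$ is $T$-periodic and the twist $(\omega A)^{-j}$ is constant on each piece $[jT,(j+1)T]$ and commutes with $G$ and with $d^{2}/dt^{2}$. Taking $s_{0}$ large enough so that $\mathcal{A}^{\omega}_{1,s_{0}}$ is non-degenerate for every $m$-th root of unity $\omega$ (and hence so is $\mathcal{A}^{(\gamma^{(m)})}_{s_{0}}$ by the direct-sum decomposition), the additivity of the spectral flow under an orthogonal direct sum of paths of Fredholm selfadjoint operators yields
\[
\ispec(\gamma^{(m)})=\spfl\bigl(\mathcal{A}^{(\gamma^{(m)})}_{s}\bigr)=\sum_{\omega^{m}=1}\spfl\bigl(\mathcal{A}^{\omega}_{1,s}\bigr)=\sum_{\omega^{m}=1}\ispecomega(\gamma),
\]
which is the claimed formula.

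The main obstacle is the careful bookkeeping of boundary/matching conditions required to show that $\Theta$ lands in $\mathscr{D}^{(m)}$ and that it is a unitary isomorphism onto it: one has to match both the $C^{1}$ gluing at every interior node $t=jT$ and the global twisted-periodic condition at the endpoints, keeping track of the exponents $(\omega A)^{-j}$ on each subinterval. The intertwining with the operator is then an essentially algebraic consequence of $T$-periodicity of $\widehat{R}$, and the reduction of the spectral flow then follows from its additivity on direct sums, which is standard once the isomorphism is in place.
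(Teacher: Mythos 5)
Your proposal is a direct Floquet/Fourier block decomposition of the Morse--Sturm operator, which is genuinely different from what the paper does (the paper deduces the Bott formula from the $\omega$-spectral flow formula of Theorem~\ref{thm:Lemma6} together with the Maslov-type iteration formula $\iota_1(A_d^m\Psi_{1,0};[0,mT])=\sum_{\omega^m=1}\iota_\omega(A_d\Psi_{1,0};[0,T])$ of Liu--Tang \cite[Thm.~1.1]{LT15} and the elementary identity $\dim\ker(A^m-\Id)=\sum_{\omega^m=1}\dim\ker(A-\omega\Id)$). Unfortunately your decomposition breaks down exactly where the semi-Riemannian setting departs from the Riemannian one.

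The gap is the claim that $\widehat R$ is ``extended $T$-periodically''. As the paper notes at the start of Section~\ref{subsec:Bott-SR}, the geometrically correct extension of $\widehat R$ to $[0,mT]$ is the $A$-twisted one, $\widehat R(t+T)=\trasp{A}\,\widehat R(t)\,A$, the relation $\widehat R(T)=\trasp{A}\widehat R(0)A$ being precisely the compatibility condition for this twisted extension (it comes from the identity $e_i(T+\tau)=\sum_j a_{ji}\,e_j(\tau)$ for the parallel frame). Your map $\Theta$, with blocks $(\iota_\omega u)(jT+\tau)=(\omega A)^{-j}u(\tau)$, is indeed a bijection onto $\mathscr D^{(m)}$ (the matching and boundary conditions check out), but it no longer intertwines $\mathcal A^{(m)}_s$ with $\bigoplus_{\omega^m=1}\mathcal A^\omega_{1,s}$. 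Computing on $[jT,(j+1)T]$ one finds $\bigl(\mathcal A^{(m)}_s\,\iota_\omega u\bigr)(jT+\tau)=\omega^{-j}\bigl[-GA^{-j}\ddot u+(\trasp{A})^j\widehat R(\tau)u+sGA^{-j}u\bigr]$, whereas $(\omega A)^{-j}(\mathcal A^\omega_{1,s}u)(\tau)=\omega^{-j}A^{-j}\bigl[-G\ddot u+\widehat R(\tau)u+sGu\bigr]$. Matching the $-G\ddot u$ and $sG$ terms forces $GA=AG$, and matching the $\widehat R$ term forces $\trasp{A}=A^{-1}$; since $A$ is only $\mathfrak g$-orthogonal ($\trasp{A}=GA^{-1}G$), the two conditions are in fact equivalent and both amount to requiring $A\in\OO(p)\times\OO(q)$ rather than $A\in\OO(p,q)$. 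This is automatic in the Riemannian case ($G=\Id$) and for timelike closed Lorentzian geodesics (where the normal bundle is definite), which is where the classical Bott argument lives, but it fails for a generic holonomy of a semi-Riemannian closed geodesic whenever $G$ is genuinely indefinite on $N_\gamma M$. (A secondary issue is that $(\omega A)^{-j}$ is not unitary once $A\notin\OO(n)$, so $\Theta$ is not a unitary equivalence, and one would need an additional argument even if the intertwining held; likewise your shift-twist operator satisfies $U^m=$ multiplication by $A^m$, not $U^m=\Id$.) The paper's detour through the Maslov index sidesteps all of this, since the additivity of $\iota_\omega$ under iteration is a purely symplectic statement about the monodromy $A_d\Psi_{1,0}(T)$ and does not require any compatibility between $A$ and $G$.
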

By using the spectral flow formula stated in Theorem \ref{thm:Lemma6} and by taking into account the homotopy  properties of $\Sp_\omega^*(2n)$ (cf. Subsection \ref{subsec:Maslov}), we get our main linear instability result for spacelike and timelike  closed semi-Riemannian geodesics.
\begin{mainthm}\label{thm:main-deg}{\bf(Semi-Riemannian Instability criterion)}
Let $(M, \mathfrak g)$ be a $(n+1)$-dimensional semi-Riemannian manifold
of index $\nu$ and let $\gamma:[0,T] \to M$ be a  closed non-lightlike geodesic. If
\begin{itemize}
\item[]{\bf (OR)} $\gamma$   is oriented and $\ispec(\gamma) +n$   is odd
\item[]{\bf (NOR)} $\gamma$   is nonoriented and $\ispec (\gamma) +n$ is even
\end{itemize}
then $\gamma$ is linearly unstable.
\end{mainthm}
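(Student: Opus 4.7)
The plan is to argue by contradiction: assume $\gamma$ is linearly stable and derive a parity for $\ispec(\gamma)+n$ that is incompatible with (OR) or (NOR). The backbone is the Morse Index Theorem (Corollary \ref{thm:Lemma7}), which reduces a statement about $\ispec(\gamma)$ to a statement about the geometric index $\igeo(\gamma)=\iota_1(\mathcal P_1)$; the parity of the latter is then controlled by Maslov-type index properties on the two connected components of $\Sp^{*}_{1}(2n)=\Sp(2n)\setminus\{M:\det(M-\Id)=0\}$ recalled in Subsection \ref{subsec:Maslov}.

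\textbf{Step 1: the stable endpoint.} Under the stability hypothesis, $\mathcal P(T)=A_d\Psi(T)\in\Sp(2n)$ is semisimple with $\sigma(\mathcal P(T))\subset\U$. Since every symplectic matrix has even algebraic multiplicity at $\pm 1$, and the remaining eigenvalues appear in complex conjugate pairs $\{e^{i\theta},e^{-i\theta}\}$ each contributing the positive factor $|e^{i\theta}-1|^2$ to $\det(\mathcal P(T)-\Id)$, a direct eigenvalue count gives
\begin{equation}
\det(\mathcal P(T)-\Id)\geq 0,
\end{equation}
with equality precisely when $1\in\sigma(\mathcal P(T))$. Hence $\mathcal P(T)$ lies in the closure of the ``positive'' component $\Sp^{*}_{1,+}(2n)$.

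\textbf{Step 2: parity of $\iota_1$.} The homotopy invariance of $\iota_1$ combined with the two-component structure of $\Sp^{*}_{1}(2n)$ yields a congruence of the form
\begin{equation}
\iota_1(\mathcal P_1) + n + \dim\ker\bigl(\mathcal P(T)-\Id\bigr) \equiv \varepsilon(A_d) \pmod{2},
\end{equation}
where $\varepsilon(A_d)\in\{0,1\}$ records the connected component of the initial point $A_d=\diag(\traspm{A},A)$. A short computation shows that $\varepsilon(A_d)$ depends only on $\det A$, i.e.\ on whether $\gamma$ is oriented or non-oriented.

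\textbf{Step 3: parity of $\dim\ker(A-\Id)$.} The $G$-orthogonality of $A$ pairs its spectrum into hyperbolic pairs $\{\lambda,\lambda^{-1}\}$ with $\lambda\notin\{\pm 1\}$ (of even total algebraic multiplicity) together with the real eigenvalues $\pm 1$. Consequently
\begin{equation}
\dim\ker(A-\Id)+\dim\ker(A+\Id)\equiv n\pmod{2},\qquad \det A=(-1)^{\dim\ker(A+\Id)},
\end{equation}
so $\dim\ker(A-\Id)\equiv n\pmod 2$ when $\det A=1$, and $\equiv n+1\pmod 2$ when $\det A=-1$.

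\textbf{Conclusion.} I will substitute $\igeo(\gamma)=\ispec(\gamma)+\dim\ker(A-\Id)$ from Corollary \ref{thm:Lemma7} into the congruence of Step 2 and combine it with Step 3, using the correct value of $\varepsilon(A_d)$ for each orientation class. In the oriented case this forces $\ispec(\gamma)+n$ to be even, contradicting (OR); in the non-oriented case it forces $\ispec(\gamma)+n$ to be odd, contradicting (NOR).

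The main obstacle is Step 2, namely pinning down the exact mod-$2$ relation between $\iota_1(\mathcal P_1)$, $\dim\ker(\mathcal P(T)-\Id)$, and the connected component of the starting point $A_d$, with all signs tracked through the conventions of the appendix. A secondary technical point is extending the eigenvalue count of Step 3 from the semisimple case to the full Jordan structure admissible for a $G$-orthogonal matrix; this will require the standard classification of such Jordan blocks, whose non-$\pm 1$ parts always come in even-dimensional packages and therefore do not affect the mod-$2$ count.
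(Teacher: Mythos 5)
Your strategy is a genuine alternative to the paper's, but the proposal as written has a real gap at its core step, and a couple of secondary issues.

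\textbf{Comparison of strategies.} The paper does not go through $\igeo(\gamma)$ and Corollary~\ref{thm:Lemma7} at all. Instead it uses Proposition~\ref{thm:lemma4} to write $\ispec(\gamma)=-\iota_1\bigl(A_d\Psi_{1,s}(T);\,s\in[0,s_0]\bigr)$, so the relevant path is the $s$-parameter path whose endpoints are the (assumed stable) Poincar\'e map $\mathcal P(T)=A_d\Psi_{1,0}(T)$ and the far endpoint $A_d\Psi_{1,s_0}(T)$. The component of the far endpoint is pinned down explicitly by computing $\det(A_d\Psi_{0,s_0}(T)-\Id)=(-1)^n\bigl(2\cosh(\sqrt{s_0}T)\bigr)^n\det A$ and then running a homotopy in $c$ on which $\mathcal A_{c,s_0}$ is everywhere nondegenerate (Corollary~\ref{thm:lemma3}). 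That endpoint is always safely away from the singular cycle, so no perturbation analysis is needed there. By contrast you analyze the $t$-parameter path $\mathcal P_1(t)=A_d\Psi_{1,0}(t)$ giving $\igeo(\gamma)$, whose left endpoint is $A_d$, and then must subtract $\dim\ker(A-\Id)$ via Corollary~\ref{thm:Lemma7}. This forces you to (i) determine the component of the perturbed $e^{-\varepsilon J}A_d$, which is delicate exactly because $A_d$ may lie on the singular cycle and, in the genuinely semi-Riemannian case, $A_d$ need not be linearly stable (so neither Lemma~\ref{thm:lemma2} nor Proposition~\ref{thm:lemma3.2} applies off the shelf), and (ii) control the parity of $\dim\ker(A-\Id)$, which needs algebraic rather than geometric multiplicities when $A$ has nontrivial Jordan structure. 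The paper's choice of path sidesteps both of these.

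\textbf{The gap in Step~2.} The congruence you write,
\begin{equation}
\iota_1(\mathcal P_1)+n+\dim\ker\bigl(\mathcal P(T)-\Id\bigr)\equiv\varepsilon(A_d)\pmod 2,
\end{equation}
is not derived and does not follow from the cited ingredients. Lemma~\ref{thm:parity} says precisely that the parity of $\iota_1$ of a path $S$ on $[a,b]$ is determined by whether $e^{-\varepsilon J}S(a)$ and $e^{-\varepsilon J}S(b)$ lie in the same or opposite components of $\Sp^{*}_{1}(2n)$; there is no additive contribution of $n$ nor of the nullity $\dim\ker(\mathcal P(T)-\Id)$. So, for a stable endpoint (where Lemma~\ref{thm:lemma2} puts $e^{-\varepsilon J}\mathcal P(T)\in\Sp(2n,\R)^{+}$), the correct statement is simply $\iota_1(\mathcal P_1)\equiv\varepsilon(A_d)\pmod 2$. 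Since you yourself flag this as ``the main obstacle'' and do not supply a derivation, this is a genuine hole, not a cosmetic one; the whole conclusion hinges on it.

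\textbf{Secondary issues.} In Step~1, showing $\det(\mathcal P(T)-\Id)\geq 0$ only places $\mathcal P(T)$ in the \emph{closure} of $\Sp(2n,\R)^{+}$. What the parity lemma actually requires is the component of the perturbed matrix $e^{-\varepsilon J}\mathcal P(T)$, and that requires the crossing-form/co-orientation argument of Lemma~\ref{thm:lemma2} (positive definiteness of $\trasp{T}T$), not just an eigenvalue count on the unperturbed $\mathcal P(T)$. In Step~3, the relations $\dim\ker(A-\Id)+\dim\ker(A+\Id)\equiv n$ and $\det A=(-1)^{\dim\ker(A+\Id)}$ hold with \emph{algebraic} multiplicities; with geometric multiplicities (which is what $\dim\ker$ computes, and what Corollary~\ref{thm:Lemma7} actually subtracts) they fail once $A$ has nontrivial Jordan blocks at $\pm 1$. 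You note this, but it is not a minor technicality: it is precisely the quantity appearing in the Morse Index Theorem you are relying on.

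In short: the route through $\igeo$ and Corollary~\ref{thm:Lemma7} could in principle be made to work, but it is strictly harder than the paper's route through the $s$-parameter path, and as written the decisive Step~2 congruence is both unproved and, in the form stated, incorrect.
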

{\begin{rem}
It is worth to observe that the spectral flow techniques were successfully developed for proving  Morse type index theorem  (cfr. \cite[Theorem 5.6]{BP10} which holds even for  closed non-lightlike geodesic) and Bott's iteration formula (cf. \cite[Theorem 5.3]{JP08}) for closed semi-Riemannian geodesics. Please note that here we choose a different operator path to define the spectral flow. The choice made by authors in  the \cite{BP10} is in the direction of extending the classical Morse index theorem for closed Riemannian geodesics whilst our choice   comes from the original purpose of investigating the instability of closed semi-Riemannian geodesics.
\end{rem}

An immediate consequence of Theorem
\ref{thm:main-deg} is the following instability criterion for closed Riemannian (resp. Lorentzian)  geodesics (resp.timelike geodesics).
\begin{maincor}\label{thm:Riemann}
Let $(M, \mathfrak g)$ be a $(n+1)$-dimensional
Riemannian manifold (resp. Lorentzian) and let $\gamma:[0,T] \to M$ be a closed (resp. timelike closed)
geodesic.
If one of the following two alternatives hold
\begin{itemize}
\item[]{\bf (OR)} $\gamma$   is oriented (resp. oriented and timelike) and $
\iMor(\gamma) +n$   is odd
\item[]{\bf (NOR)} $\gamma$   is non-oriented (resp. non-oriented and
timelike) and $
\iMor(\gamma) +n$  is even
\end{itemize}
then the geodesic  is linearly unstable.
\end{maincor}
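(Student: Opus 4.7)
The plan is to reduce both alternatives of the corollary to Theorem \ref{thm:main-deg} by proving that, in these two classical contexts, the spectral index $\ispec(\gamma)$ coincides with the ordinary Morse index $\iMor(\gamma)$ of the geodesic.

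First I would specialize the matrix $G$ appearing in the reduced Morse--Sturm systems \eqref{eq:Space-Reduced-space-intro} and \eqref{eq:Space-Reduced-time-intro}. In the Riemannian case we have $\nu=0$, so the spacelike reduction gives $G=I_n$. In the Lorentzian case a timelike geodesic has $\nu=1$, and the timelike reduction eliminates precisely the unique negative direction of the metric, yielding $G=I_{n+1-\nu}\oplus(-I_{\nu-1})=I_n$. Thus in both settings the operator family introduced in \eqref{eq:gli-operatori} becomes
\[
\mathcal A_s=-\dfrac{d^2}{dt^2}+\widehat R(t)+s\,I_n,
\]
a selfadjoint path on $L^2([0,T];\R^n)$ with domain incorporating the twisted periodic boundary condition through $A$. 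Since $\widehat R(t)$ is a bounded symmetric perturbation of $-d^2/dt^2$, each $\mathcal A_s$ is bounded below with compact resolvent, and the spectrum consists of isolated eigenvalues of finite multiplicity accumulating only at $+\infty$.

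Second I would compute the spectral flow explicitly using this semi-boundedness. For $s$ large enough (the $s_0$ provided by Corollary~\ref{thm:lemma3}) the operator $\mathcal A_{s_0}$ is positive definite and, in particular, non-degenerate. As $s$ increases from $0$ to $s_0$, the eigenvalues of $\mathcal A_s$ move monotonically upward (their derivatives with respect to $s$ equal $\langle I_n\,e_s,e_s\rangle=1$ on unit eigenvectors). Hence each negative eigenvalue of $\mathcal A_0$ crosses zero exactly once transversally in the positive direction, while no positive eigenvalue contributes. By the definition of spectral flow via eigenvalue crossings,
\[
\ispec(\gamma)=\spfl(\mathcal A_{1,s};\,s\in[0,s_0])=\#\{\text{negative eigenvalues of }\mathcal A_0\}=\iMor(\gamma),
\]
where in the final equality I invoke the classical fact that the negative eigenspace of the second variation operator on the normal bundle (with the Lorentzian convention for timelike geodesics giving a positive--definite quadratic form on normal variations) counts exactly the Morse index of $\gamma$.

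Third, with this identification $\ispec(\gamma)=\iMor(\gamma)$, the hypotheses (OR) and (NOR) of Corollary~\ref{thm:Riemann} become literally the hypotheses (OR) and (NOR) of Theorem~\ref{thm:main-deg}, so the conclusion of linear instability is immediate. The main potential obstacle is a careful verification of the monotonicity/transversality of crossings and of the Fredholm/selfadjoint regularity needed to apply the eigenvalue--crossing formula for the spectral flow, but these follow from the compact--resolvent structure of $\mathcal A_s$ and from the non-degeneracy of $\mathcal A_{s_0}$ already established in the body of the paper.
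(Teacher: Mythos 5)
Your proposal is correct and follows the same structural route as the paper: reduce Corollary~\ref{thm:Riemann} to Theorem~\ref{thm:main-deg} by showing that $\ispec(\gamma)=\iMor(\gamma)$ when $G=I_n$, which happens precisely in the Riemannian case ($\nu=0$) and the timelike Lorentzian case ($\nu=1$). The only difference is that the paper delegates the identity $\ispec(\gamma)=\iMor(\gamma)$ to \cite[Section 3]{HS09} and \cite[Section 3]{HS10}, whereas you supply the short argument yourself: since $\mathcal A_s=\mathcal A_0+sI_n$, the spectrum is rigidly translated, every strictly negative eigenvalue of $\mathcal A_0$ crosses zero exactly once with positive crossing form, no others cross, and hence the spectral flow up to a non-degenerate $s_0$ equals the number of negative eigenvalues, i.e.\ the Morse index. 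This is a legitimate and self-contained substitute for the citation, and the finiteness of $\iMor(\gamma)$ (semi-boundedness of $\mathcal A_0$, compact resolvent) that you invoke is indeed exactly what makes the reduction to the Riemannian/timelike case work.
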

\begin{proof}
 The proof of this result readily follows by  Theorem
 \ref{thm:main-deg} once observed that for closed Riemannian  (resp. timelike closed Lorentzian) geodesics, $\ispec(\gamma)=\iMor(\gamma)$(cf. \cite[Section $3$]{HS09} or \cite[Section $3$]{HS10}).
\end{proof}
\begin{rem}
It is worth noting that the celebrated Poincare's instability criterion, can be recovered by  Corollary \ref{thm:Riemann}.  In fact,  if $\gamma$ is
a minimizing closed geodesic, then
$\iMor(\gamma)=0$ and on a Riemannian surface $n=1$; thus  $\iMor(\gamma)+n =1$.
\end{rem}
Before describing the last  result, we introduce the notion of strong stability for closed geodesic on Riemannian or Lorentzian
manifolds.
\begin{defn}\label{def:strong stable}
Let $\gamma$ be a  closed (resp. timelike closed) Riemannian (resp. Lorentzian)
geodesic and let $\mathcal P(T)$ be the corresponding monodromy matrix. We say that
$\gamma$ is termed {\em strongly stable\/} if,  there exists
$\varepsilon>0$ such that any symplectic matrix $M$ with
$\norm{ M-\mathcal P(T)}\leq\varepsilon$ is linearly stable.
\end{defn}

\begin{mainthm}\label{cor:non-strong stable}
Let $(M, \mathfrak g)$ be a  $(n+1)$-dimensional Riemannian (resp. Lorentzian) manifold and let $\gamma$ be a  closed (resp. timelike closed) Riemannian (resp. Lorentzian)
geodesic. We assume that
\[
\iMor\big(\gamma^{(m)}\big)=0, \qquad  \textrm{ for any } m\in \N.
\]
Then $\gamma$ is not strongly stable.
\end{mainthm}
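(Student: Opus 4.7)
The plan is to argue by contradiction: assuming $\gamma$ is strongly stable, I would combine the Bott-type iteration formula (Theorem~\ref{thm:Bott-SR}) with the positivity of the Riemannian/timelike-Lorentzian $\omega$-spectral index to force all $\ispecomega(\gamma)$ to vanish at roots of unity, and then apply the instability criterion (Theorem~\ref{thm:main-deg}) to an iterate whose parity matches either (OR) or (NOR).

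First I would observe that strong stability passes to integer powers: by classical Krein theory, strong stability of $\mathcal{P}(T)\in\Sp(2n,\R)$ is preserved under iteration, so $\gamma^{(m)}$ is (strongly) linearly stable for every $m\in\N$. Next, in the Riemannian, respectively timelike Lorentzian, setting one has $\ispec(\gamma^{(m)})=\iMor(\gamma^{(m)})$ and each $\omega$-spectral index $\ispecomega(\gamma)$ is a non-negative integer, being a Morse-type count associated to a spectral flow whose terminal operator $\mathcal A^\omega_{1,s_0}$ is positive definite. Plugging the hypothesis $\iMor(\gamma^{(m)})=0$ into Theorem~\ref{thm:Bott-SR},
\[
0=\ispec(\gamma^{(m)})=\sum_{\omega^m=1}\ispecomega(\gamma),
\]
so positivity forces $\ispecomega(\gamma)=0$ for every root of unity $\omega$.

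I would then apply Theorem~\ref{thm:main-deg} to a suitably chosen iterate. Recalling that $\gamma^{(m)}$ is oriented iff $(\det A)^m=1$, a case analysis on the parity of $n$ and the orientation of $\gamma$ settles three of the four possibilities: (a) if $n$ is odd and $\gamma$ is oriented, then $\iMor(\gamma)+n$ is odd and (OR) makes $\gamma$ itself linearly unstable, contradicting the inheritance of linear stability to iterates; (b) if $n$ is odd and $\gamma$ is non-oriented, the same argument applied to $\gamma^{(2)}$ (now oriented and still of zero Morse index) gives the contradiction; (c) if $n$ is even and $\gamma$ is non-oriented, then (NOR) applies to $\gamma$ directly. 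The delicate remaining case is $n$ even with $\gamma$ oriented, for which every iterate is oriented and $\iMor(\gamma^{(m)})+n$ is always even, so Theorem~\ref{thm:main-deg} never triggers along the sequence of iterates.

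To close this final case I would exploit that $\ispecomega(\gamma)=0$ for every root of unity, and use density of roots of unity in $\U$ together with the homotopy invariance and semi-continuity of the spectral flow to deduce $\ispecomega(\gamma)=0$ for all but finitely many $\omega\in\U$. Theorem~\ref{thm:Lemma6} then reduces to $\igeo^\omega(\gamma)=\dim\ker(A-\omega\,\Id)$ for such $\omega$; interpreting this vanishing via the $\omega$-Maslov index of $\mathcal P_\omega$ and Krein-signature bookkeeping forces every eigenvalue of $\mathcal P(T)$ to equal $1$ with semisimple Jordan structure, i.e.\ $\mathcal P(T)=\Id$. But the identity of $\Sp(2n,\R)$ is not strongly stable, since arbitrarily small hyperbolic perturbations exist, and this produces the required contradiction. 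The main obstacle is precisely this last step: rigorously translating the global vanishing of the $\omega$-spectral index into a rigidity statement on $\mathcal P(T)$, which requires careful Krein-type/mean-index bookkeeping absent from the paper's Bott formula alone.
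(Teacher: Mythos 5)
Your cases (a)--(c) are correct but take an unnecessary detour through Theorem~\ref{thm:main-deg}, and your remaining case (d) has a genuine gap that you yourself flag: the claimed conclusion $\mathcal P(T)=\Id$ does not follow, and in fact cannot follow. From the density of roots of unity and local constancy of $\omega\mapsto\iMor(\omega,\gamma)$ away from eigenvalues, one does indeed obtain $\iMor(\omega,\gamma)=0$ off a finite set, and hence that the left and right splitting numbers agree at every $\lambda\in\sigma(\mathcal P(T))\cap\U$; this is exactly what the paper calls \emph{index hyperbolicity} (Proposition~\ref{thm:trivial}). But index hyperbolicity of a linearly stable matrix does \emph{not} force $\mathcal P(T)=\Id$; for instance, $\mathcal P(T)=D(2)^{\diamond n}$ is hyperbolic with vanishing $\omega$-indices for all $\omega$, and within the stable regime a rotation block $R(\theta)$ always has $S^+-S^-=\pm1$ at $e^{\pm i\theta}$, so the Krein bookkeeping you envision cannot collapse the spectrum to $\{1\}$. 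Moreover, under your contradiction hypothesis (strong stability) the value $1$ is excluded from $\sigma(\mathcal P(T))$ (Remark~\ref{rem:strong stable}), so $\mathcal P(T)=\Id$ is internally inconsistent with your own standing assumption rather than a derived contradiction.

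The correct endgame --- and the one the paper uses --- is one step shorter and handles all parity cases uniformly, making your Theorem~\ref{thm:main-deg} detour superfluous. Index hyperbolicity gives $S^+_{\mathcal P(T)}(\lambda)=S^-_{\mathcal P(T)}(\lambda)$ at every $\lambda\in\sigma(\mathcal P(T))\cap\U$. Strong stability (Lemma~\ref{lem:strong stability}) makes every such $\lambda$ Krein-definite, i.e.\ of Krein type $(p,q)$ with $p-q\neq 0$; since $S^+_{\mathcal P(T)}(\lambda)-S^-_{\mathcal P(T)}(\lambda)=p-q$ by \cite[Corollary 8, pag.~198]{Lon02} (see Proposition~\ref{prop:splitting number and ultimate type}), the two conditions are incompatible. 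Note also that the paper derives index hyperbolicity directly from the Riemannian Bott formula (Proposition~\ref{eq:I-F-C}, Lemma~\ref{thm:R-S-N}) rather than from the semi-Riemannian Theorem~\ref{thm:Bott-SR} you invoke, though in the Riemannian/timelike-Lorentzian setting these agree; the substantive issue is solely in your final reduction to $\mathcal P(T)=\Id$.
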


\section{Variational preliminaries}\label{sec:geometric-variational-preliminaries}

The aim of this section is to introduce the basic  geometric
and variational setting  and to fix our notations.
Our basic references are \cite{Kli78}.

It is well-known that,  closed semi-Riemannian  geodesics are critical
points of the {\em geodesic energy functional\/} defined
on  the {\em free loop space\/}
$\Lambda \MM$ of $\MM$ where $\Lambda \MM$ denotes the Hilbert
manifold of
all closed curves in $\MM$ of Sobolev class $W^{1,2}$. Let $\circo$ be the circle, viewed as the quotient  $[0,T]/\{0,T\}$,
$\Lambda \MM\= W^{1,2}(\circo,\MM)$ be the infinite dimensional
Hilbert manifold
(cf. \cite[Theorem 1.2.9, pag.13]{Kli78}) of all loops $\gamma: [0,T] \to
\MM$ (namely
$\gamma(0)=\gamma(T)$)
of Sobolev class $W^{1,2}$. For any
$\gamma \in \Lambda \MM$,
the tangent space $T_\gamma \Lambda \MM$ can be identified with the Hilbert
space of all sections of the
pull-back bundle $\gamma^*(T\MM)$ (namely the bundle of all periodic
vector fields along
$\gamma$) of
Sobolev class $W^{1,2}$ (cf. \cite[Theorem 1.3.6, pag.19]{Kli78}) that we'll
denote by $\mathcal H_\gamma$; thus
\[
\mathcal H_\gamma=\{\xi \in W^{1,2}(\circo, T\MM): \tau \circ \xi=
\gamma\},
\]
where $\tau:TM \to M$ denotes the canonical tangent bundle projection.
We consider the {\em geodesics energy functional\/}
$E: \Lambda \MM \to \R$ given by
\begin{equation}\label{eq:funzionale-energia}
E(\gamma)= \dfrac12 \int_0^T\mathfrak g\big(\dot \gamma(t), \dot \gamma(t) \big)\,
dt
\end{equation}
and we observe that $E$ is
differentiable and for any $\xi \in T_\gamma \Lambda \MM$
\begin{equation}
dE (\gamma ) [\xi ] =  \int_0^T \mathfrak g\left( \Ddt \xi(t),
\dot \gamma(t)\right)dt.
\end{equation}
(Cf.  \cite[Lemma 1.3.9, pag.21]{Kli78}, for further details).
By standard regularity arguments and by performing integration by parts, it follows that critical points corresponds to closed geodesics.
\begin{lem}
$\gamma \in \Lambda  \MM$ is a closed geodesic (or a constant map) if
and only if  it
is a critical point of $E$, namely $\displaystyle
dE(\gamma)[\xi]=0  \textrm{ for all  } \xi \in T_\gamma \Lambda  M.$
\end{lem}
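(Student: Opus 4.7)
The plan is to prove both implications via integration by parts, together with a standard regularity argument for the non-trivial direction. Throughout I work with the expression
\begin{equation}
dE(\gamma)[\xi] = \int_0^T \mathfrak g\bigl(\Ddt \xi(t), \dot\gamma(t)\bigr)\, dt
\end{equation}
already established in the excerpt.

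\textbf{Sufficiency.} If $\gamma$ is a closed geodesic (the constant case being trivial), then $\Ddt \dot\gamma \equiv 0$ on $[0,T]$, $\gamma(0)=\gamma(T)$ and $\dot\gamma(0)=\dot\gamma(T)$ by Equation \eqref{eq:geo}. For any $\xi \in T_\gamma \Lambda M = \mathcal H_\gamma$, an integration by parts gives
\begin{equation}
dE(\gamma)[\xi] = \mathfrak g\bigl(\xi(T),\dot\gamma(T)\bigr) - \mathfrak g\bigl(\xi(0),\dot\gamma(0)\bigr) - \int_0^T \mathfrak g\bigl(\xi(t), \Ddt \dot\gamma(t)\bigr)\, dt.
\end{equation}
The interior integral vanishes by the geodesic equation, while the boundary term vanishes by the periodicity $\xi(0)=\xi(T)$ (built into $\mathcal H_\gamma$) together with $\dot\gamma(0)=\dot\gamma(T)$.

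\textbf{Necessity.} Assume $dE(\gamma)[\xi]=0$ for every $\xi \in T_\gamma\Lambda M$. First, I would localize: for any $\xi \in \mathcal H_\gamma$ supported in a compact subinterval of $(0,T)$ contained in a coordinate chart, the vanishing of $dE(\gamma)[\xi]$ reads, in local coordinates, as a weak form of the geodesic ODE $\Ddt\dot\gamma=0$. Standard elliptic/ODE regularity theory for the quasi-linear geodesic equation (with smooth Christoffel symbols) upgrades the $W^{1,2}$ solution $\gamma$ to a smooth classical solution of $\Ddt\dot\gamma=0$ on each chart, and hence on $(0,T)$; continuity then extends the equation and the derivative $\dot\gamma$ to the closed interval $[0,T]$. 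The condition $\gamma(0)=\gamma(T)$ is already imposed by $\gamma \in \Lambda M$.

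\textbf{Closing up the boundary condition.} It remains to show $\dot\gamma(0) = \dot\gamma(T)$. Having established that $\Ddt\dot\gamma \equiv 0$ and that $\gamma$ is smooth, the integration-by-parts identity above simplifies, for an arbitrary $\xi \in \mathcal H_\gamma$, to
\begin{equation}
0 = dE(\gamma)[\xi] = \mathfrak g\bigl(\xi(0),\, \dot\gamma(T)-\dot\gamma(0)\bigr),
\end{equation}
where I used $\xi(0)=\xi(T)$ and that $\dot\gamma(T)-\dot\gamma(0)$ is a well-defined element of $T_{\gamma(0)}M = T_{\gamma(T)}M$. Since $\xi(0) \in T_{\gamma(0)}M$ is arbitrary and $\mathfrak g$ is non-degenerate (although indefinite, this is the key semi-Riemannian ingredient), we conclude $\dot\gamma(T)=\dot\gamma(0)$, so $\gamma$ solves \eqref{eq:geo}.

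\textbf{Main obstacle.} The delicate point is the regularity bootstrap: passing from the weak vanishing $dE(\gamma)[\xi]=0$ for all $\xi$ supported in $(0,T)$ to the pointwise geodesic equation and to smoothness of $\gamma$. In the semi-Riemannian setting this is not harder than in the Riemannian one because the geodesic ODE is a smooth second-order system with invertible leading coefficient, so the usual bootstrap argument applies, but it must be invoked explicitly to justify the boundary computation in the last step.
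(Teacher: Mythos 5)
The paper does not give its own proof of this lemma; it simply refers to Klingenberg \cite[Theorem 1.3.11]{Kli78}. Your proposal reproduces exactly the standard textbook argument one finds there: integration by parts for sufficiency; for necessity, localization to compactly supported variations to obtain the weak Euler--Lagrange equation, a regularity bootstrap for the geodesic ODE, and a final boundary computation that uses the non-degeneracy of $\mathfrak g$ to force $\dot\gamma(0)=\dot\gamma(T)$. The argument is correct, and the point you emphasize at the end --- that the bootstrap goes through in the semi-Riemannian setting because the geodesic ODE has smooth coefficients with invertible leading term, so positivity of $\mathfrak g$ is never used --- is precisely the relevant observation. A minor presentational remark: the first step of the bootstrap only gives $\gamma\in W^{2,1}_{\mathrm{loc}}$ (since $\dot\gamma\otimes\dot\gamma$ is a priori only $L^1$), from which $\dot\gamma$ is continuous; only then does the usual iteration upgrade to $C^\infty$. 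Stating this explicitly would tighten the sketch, but the argument as given is sound and matches the cited source in spirit.
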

\begin{proof}
 For the proof we refer the interested reader to \cite[Theorem 1.3.11]{Kli78}.
\end{proof}
If $\gamma \in \Lambda    M$ is a
non-constant closed geodesic in $( \MM,\mathfrak g)$ then  the
second variation of the geodesic energy functional $E$  at $\gamma$
is the {\em index form\/} $\mathfrak I_\gamma$ given by
\begin{equation}\label{eq:index_form}
\mathfrak I_\gamma[\xi, \eta]= \int_0^T \Big[\mathfrak g\left(\Ddt \xi (t) ,
  \Ddt \eta(t)\right )+  \mathfrak g\big (\mathcal R(\dot \gamma(t) , \xi(t)) \dot
\gamma(t), \eta(t)\big)
\Big]dt.
\end{equation}
It is readily seen that
$\mathfrak I_\gamma$ is a bounded symmetric bilinear form on $T_{\gamma}\Lambda M$
whose associated quadratic form will be denoted by $\mathfrak  Q_\gamma$.
Let $ \mathcal H^\perp_{\gamma}\subset \mathcal H_{\gamma}$ be the
closed (real) codimensional one subspace of all $T$-periodic $W^{1,2}$-vector fields
along $\gamma$  that are everywhere orthogonal to
$\dot
\gamma$. We observe that  $\displaystyle
\dim \ker \mathfrak Q_\gamma = \dim\ker \left(\mathfrak Q_\gamma|_{\mathcal
H^\perp_{\gamma}}\right)+1 $.  We term  {\em nullity of the
geodesic $\gamma$\/} the (non-negative) integer defined by 
$\displaystyle n_0(\mathfrak Q_\gamma)\= \dim \ker \left(\mathfrak Q_\gamma|_{\mathcal H^\perp_{\gamma}}\right).$ Thus the nullity of $\gamma$ is defined as the dimension of the space of
periodic Jacobi fields along $\gamma$ that are pointwise $\mathfrak
g$-orthogonal to $\dot \gamma$.

Since $e_i$ is a parallel vector field  along $\gamma$ and if $\xi$ is a Jacobi field,
then, by direct computation we have
\begin{equation}
\frac{d^{2}}{d^{2}t}\mathfrak{g}(\xi(t),\dot{\gamma}(t))
=\mathfrak{g}(\Ddtt \xi  (t),\dot{\gamma}(t))
=\mathfrak{g}(\mathcal R(\dot \gamma (t) , \xi  (t)) \dot \gamma (t),\dot{\gamma}(t))=0,
\end{equation}
so $\mathfrak{g}(\xi(t),\dot{\gamma}(t))=\alpha t+\beta$ for some  $\alpha,\beta \in \R$.
In particular, if $\xi(0),\xi'(0)\in N_{\gamma(0)}M$, it follows that $\alpha=\beta=0$.  which is equivalent to
$\mathfrak{g}(\xi(t),\dot{\gamma}(t))\equiv0$ for any $t\in[0,T]$. By this fact, we infer that
 $N_{\gamma(0)}M\oplus N_{\gamma(0)}M$ is invariant
under the linearized geodesics flow. (As we already observed, being the parallel transport a $\mathfrak g $-isometry).
In particular the Morse-Sturm system  given in Equation \eqref{eq:Space-Original-intro} decouples into a scalar differential
equation (corresponding to the Jacobi field)
 and a differential system in $\R^n$
(corresponding to the restriction of the Jacobi deviation equation to vector fields $\mathfrak g$-orthogonal to $\dot \gamma$).

\section{Geometric and spectral index  of a closed
geodesic}\label{sec:geometric-variational-index}

This section is devoted to show that the geometrical and the spectral index previously introduced are actually well-defined.

 \begin{lem}
 The geometric index of a closed non-lightlike semi-Riemannian geodesic $\gamma$ is well-defined, i.e.
 it is independent on the trivializing parallel frame along $\gamma$.
 \end{lem}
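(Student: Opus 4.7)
The plan is to show that any two parallel $\mathfrak g$-orthonormal frames differ by a fixed (time-independent) matrix in the indefinite orthogonal group of $G$, and that this translates to a symplectic conjugation of the path $\mathcal P_\omega$. Then I invoke the invariance of the Maslov-type index $\iota_1$ under symplectic conjugation, which is a standard property recorded in the appendix.

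First I would fix a closed non-lightlike geodesic $\gamma$ and two parallel $\mathfrak g$-orthonormal frames $\mathfrak E=\{e_i\}$ and $\mathfrak E'=\{e'_i\}$ along $\gamma$, each containing $\dot\gamma/|\dot\gamma|$ as the prescribed element. Since parallel transport is a $\mathfrak g$-isometry and both frames are $\mathfrak g$-orthonormal with $e_0(0)$ (resp.\ $e_{n-\nu+1}(0)$) fixed, there exists a constant matrix $O$ satisfying $\trasp{O} G O = G$ (i.e., $O$ lies in the indefinite orthogonal group preserving $G$) such that $e'(t)=e(t)\,O$ for all $t\in[0,T]$. Writing a normal Jacobi field as $u$ in the first frame and $u'$ in the second, one has $u=Ou'$. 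A direct computation then gives the transformation rules
\begin{equation}
\widehat R'(t)=\trasp{O}\,\widehat R(t)\,O,\qquad A'=O^{-1}AO,
\end{equation}
and, for the fundamental solution of the Hamiltonian system \eqref{eq:Dcs}, $\Psi'_{1,0}(t)=\mathcal B^{-1}\Psi_{1,0}(t)\mathcal B$, where
\begin{equation}
\mathcal B\=\begin{bmatrix} \traspm{O} & 0\\ 0 & O\end{bmatrix}.
\end{equation}

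Next I would verify that $\mathcal B\in\Sp(2n,\R)$: a direct computation using $\trasp{O}GO=G$ shows $\trasp{\mathcal B}J\mathcal B=J$. A short calculation then yields
\begin{equation}
A'_d=\mathcal B^{-1}A_d\mathcal B,\qquad \mathcal P'_\omega(t)=\omega A'_d\Psi'_{1,0}(t)=\mathcal B^{-1}\mathcal P_\omega(t)\mathcal B,
\end{equation}
so the two candidate paths in $\Sp(2n,\R)$ defining $\igeo^\omega(\gamma)$ are conjugate by the fixed symplectic matrix $\mathcal B$.

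Finally, I would conclude by invoking the invariance of the Maslov-type index $\iota_1$ under symplectic conjugation: for any fixed $\mathcal B\in\Sp(2n,\R)$ and any continuous path $\mathcal P\colon[0,T]\to\Sp(2n,\R)$, one has $\iota_1(\mathcal B^{-1}\mathcal P(\cdot)\mathcal B)=\iota_1(\mathcal P(\cdot))$. This follows by connecting $\mathcal B$ to $\Id$ via a path in the (connected) group $\Sp(2n,\R)$ and using the homotopy invariance of $\iota_1$, noting that conjugation preserves the spectrum and in particular the intersections with the Maslov cycle $\{M:\det(M-\Id)=0\}$. The only delicate point is to verify (or cite from the appendix) this symplectic-conjugation invariance of $\iota_1$; all the remaining steps are direct linear-algebra computations based on the identity $\trasp{O}GO=G$. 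This proves that $\igeo^\omega(\gamma)$, and hence in particular $\igeo(\gamma)$, is independent of the choice of parallel trivializing frame.
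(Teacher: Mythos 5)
Your proof is correct and follows essentially the same route as the paper: both arguments reduce a change of parallel $\mathfrak g$-orthonormal frame to right/left action by a constant block-diagonal symplectic matrix built from a matrix in the $G$-orthogonal group, show that the resulting path $\mathcal P_\omega$ is conjugated by that fixed symplectic matrix, and conclude by invariance of the Maslov-type index. Your treatment is in fact a bit more explicit than the paper's: you cleanly record the conjugation $\mathcal P_\omega'(t)=\mathcal B^{-1}\mathcal P_\omega(t)\mathcal B$ and note the one genuinely non-trivial ingredient, namely invariance of $\iota_1$ under conjugation by a fixed $\mathcal B\in\Sp(2n,\R)$, which you correctly justify via the stratum-preserving homotopy $s\mapsto\mathcal B_s^{-1}\mathcal P_\omega(\cdot)\mathcal B_s$ (the endpoint nullities $\dim\ker(\cdot-\Id)$ are conjugation-invariant, so Long's homotopy property applies), or equivalently via Property IV for $\iCLM$ combined with Proposition \ref{prop:L-C}.
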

 \begin{proof}
 We prove the result only in the case of closed spacelike geodesics being the timelike case completely analogous.
 Let  $\{f_1(0),\ldots,f_n(0)\}$ be another
 $\mathfrak g$-orthonormal basis of $N_{\gamma(0)}M$
 such that $f_i(0)=\sum_{j=1}^n c_{ij}e_j(0)$ and we let $C\=[c_{ij}]_{i,j=1}^n$.
 Then, by direct computation,
 the Morse-Sturm system given in Equation \eqref{eq:Space-Reduced-space-intro} fits into the following
  \begin{equation}\label{eq:transformed M-system}
-G\ddot u(t)+\widetilde{R}(t) u(t)=0,\qquad t \in [0,T]
  \end{equation}
where $\widetilde{R}(t)=C\widehat{R}(t)\trasp{C}$ and the boundary condition of $u$ is given
by $u(0)=A\trasp{C}u(T)$. We let
 $C_d\=\begin{bmatrix}\traspm{C}&0\\0&C\end{bmatrix}$ and let $\widetilde \Psi$ be the fundamental solution
 of the corresponding Hamiltonian system given by
 $
\dot z(t)= \widetilde{K}(t)z(t),
$
with $\widetilde{K}(t)=C_dK(t)\trasp{C}_d$. It is easy to check that $\widetilde{\Psi}(t)=\traspm{C}_d\Psi(t)$ and that
\[
\iota_1\big(A_d\trasp{C}_d\widetilde{\Psi}(t); t \in [0,T]\big)=\iota_1(A_d\Psi(t); t \in [0,T]\big).
\]
 This concludes the proof.
 \end{proof}
\begin{lem}
For every $c \in [0,1]$ and $s \in [0,+\infty)$, the operator $\mathcal A^\omega_{c,s}$ is
formally selfadjoint.
\end{lem}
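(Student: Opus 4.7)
The plan is to verify formal selfadjointness by a direct integration-by-parts calculation on the domain $E^2_\omega([0,T])$. Given $u, v \in E^2_\omega([0,T])$, I want to show that $\langle \mathcal{A}^\omega_{c,s} u, v\rangle_{L^2} = \langle u, \mathcal{A}^\omega_{c,s} v\rangle_{L^2}$, where $\langle\cdot,\cdot\rangle$ denotes the standard Hermitian inner product on $\C^n$ and its $L^2$-extension.

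First I would split off the zeroth-order piece. Since $\widehat{R}(t)$ is real symmetric, $G$ is real diagonal, and $c,s$ are real, the multiplication operator $c\widehat{R}(t)+sG$ is pointwise Hermitian and therefore contributes the same quantity on both sides. Thus the claim is reduced to showing that $u \mapsto -G\ddot u$ is formally selfadjoint on $E^2_\omega([0,T])$. Integrating by parts twice, and using that $G=\trasp{G}$ so that $\langle Gw,z\rangle = \langle w,Gz\rangle$ pointwise, I obtain
\begin{equation}
\int_0^T \langle -G\ddot u(t),v(t)\rangle\, dt \;=\; \int_0^T \langle u(t),-G\ddot v(t)\rangle\, dt + \Big[\langle u(t),G\dot v(t)\rangle - \langle \dot u(t),Gv(t)\rangle\Big]_0^T.
\end{equation}
The entire proof then reduces to showing that the boundary form
\begin{equation}
B(u,v) \;:=\; \langle u(T),G\dot v(T)\rangle - \langle \dot u(T),Gv(T)\rangle - \langle u(0),G\dot v(0)\rangle + \langle \dot u(0),Gv(0)\rangle
\end{equation}
vanishes for all $u,v\in E^2_\omega([0,T])$.

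This is where the boundary conditions enter. Substituting $u(0)=\omega A u(T)$, $\dot u(0)=\omega A\dot u(T)$ and the analogous identities for $v$, and exploiting that the Hermitian product is antilinear in the second slot, the two terms at $t=0$ acquire a prefactor $\omega\bar\omega=|\omega|^2=1$ and the matrix sandwich $\trasp{A}GA$. Invoking the $\mathfrak g$-orthogonality of $A$ derived earlier, namely $\trasp{A}GA=G$, each of the $t=0$ contributions is transformed into exactly its $t=T$ counterpart, and the two pairs cancel.

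I do not anticipate a serious obstacle here; the only bookkeeping point worth checking is that the complex conjugation from the Hermitian product combines with the unit modulus of $\omega$ so that the cancellation is exact (rather than producing an unwanted phase). Once this is verified the formal selfadjointness of $\mathcal{A}^\omega_{c,s}$ on $E^2_\omega([0,T])$ follows immediately.
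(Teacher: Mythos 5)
Your proposal is correct and takes the same route the paper indicates: the paper's proof is a one-line remark that the result ``readily follows by integrating by parts,'' and you have simply carried that out, correctly identifying that the boundary terms cancel because $\trasp{A}GA=G$ and $|\omega|^2=1$.
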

\begin{proof}
The proof of this result readily follows by integrating by parts.
\end{proof}
\begin{lem}\label{thm:lemma1}
For any $c_0>0$, there exists a sufficiently large $s_0$ such that  for any  $|c| \leq c_0$ and $s \geq s_0$,
the operator defined by
\[
\mathcal B_{c,s}\= -G \dfrac{d^2}{dt^2} + c \Id + s G, \qquad s \in [0,+\infty)
\]
is non-degenerate (meaning that has a trivial kernel) on 
\[
E^2_\omega([0,T])=\Set{u\in W^{2,2}([0,T], \C^n)| u(0)=\omega A u(T), \  \dot u(0)=\omega A \dot u(T)} \textrm{ for every } \omega\in\, \U.
\]
\end{lem}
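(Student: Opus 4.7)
The plan is to reduce the kernel condition to a finite-dimensional linear-algebra problem and then use an elementary bootstrap that exploits the vastly different growth rates of the two families of ODE solutions.

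First, multiplying $\mathcal B_{c,s}u=0$ by $G$ and using $G^{2}=\Id$ yields $\ddot u = Nu$ with $N:=sI+cG$. For $|c|\le c_{0}<s$ the matrix $N$ is positive definite on $\C^{n}$, with eigenvalues $\mu_{\pm}^{2}=s\pm c$ on the $(\pm 1)$-eigenspaces $E_{\pm}$ of $G$, so every solution has the form $u(t)=e^{t\sqrt{N}}\alpha+e^{-t\sqrt{N}}\beta$ with $\alpha,\beta\in\C^{n}$. Using $u(0)=\alpha+\beta$, $\dot u(0)=\sqrt{N}(\alpha-\beta)$ and $u(T)$, $\dot u(T)$ accordingly, the boundary conditions defining $E^{2}_{\omega}([0,T])$ can be combined by addition and subtraction into the system
\[
\begin{aligned}
2\alpha &= (A_{1}+A_{2})\,e^{T\sqrt{N}}\alpha + (A_{1}-A_{2})\,e^{-T\sqrt{N}}\beta,\\
2\beta  &= (A_{1}-A_{2})\,e^{T\sqrt{N}}\alpha + (A_{1}+A_{2})\,e^{-T\sqrt{N}}\beta,
\end{aligned}
\]
where $A_{1}:=\omega A$ and $A_{2}:=\sqrt{N}^{-1}(\omega A)\sqrt{N}$. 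Writing $A$ in its $E_{+}\oplus E_{-}$ block form shows that the off-diagonal blocks of $A_{1}-A_{2}$ are multiplied by the factor $(1-\mu_{\mp}/\mu_{\pm})=O(|c|/s)$, whence $\|A_{1}-A_{2}\|=O(1/s)$ uniformly in $\omega\in\U$ and $|c|\le c_{0}$.

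Now I would bootstrap. Factor the left-hand side of the first equation as $-2\omega A^{(s)}e^{T\sqrt{N}}\bigl(\Id-\omega^{-1}(A^{(s)})^{-1}e^{-T\sqrt{N}}\bigr)$ with $A^{(s)}:=(A_{1}+A_{2})/(2\omega)\to A$: both factors are invertible for large $s$, and the inverse has norm $\le C\|A^{-1}\|e^{-T\mu_{-}}$, which tends to $0$. This yields $\|\alpha\|\le Ce^{-T\mu_{-}}\|\beta\|/s$. Substituting into the second equation gives a right-hand side of size $O(\|\beta\|\,e^{T(\mu_{+}-\mu_{-})}/s^{2})=O(\|\beta\|/s^{2})$, since $\mu_{+}-\mu_{-}=2c/(\mu_{+}+\mu_{-})=O(1/\sqrt{s})\to 0$ makes the exponentials almost cancel. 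The left-hand side of the second equation, $2(\Id-\omega A^{(s)}e^{-T\sqrt{N}})\beta$, has uniformly bounded inverse because $\omega A^{(s)}e^{-T\sqrt{N}}\to 0$, so we obtain $\|\beta\|\le C'\|\beta\|/s^{2}$. For $s$ larger than some $s_{0}$ (depending only on $c_{0}$, $\|A^{\pm 1}\|$ and $T$) this forces $\beta=0$, hence $\alpha=0$, hence $u\equiv 0$.

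The main obstacle is precisely the cancellation $e^{T(\mu_{+}-\mu_{-})}=O(1)$ in the second bootstrap step: without it the cross term $e^{-T\sqrt{N}}(\cdots)e^{T\sqrt{N}}$ arising in the estimate of $(A_{1}-A_{2})e^{T\sqrt{N}}\alpha$ would blow up exponentially and the argument would fail. This cancellation is a consequence of the elementary identity $\mu_{+}-\mu_{-}=2c/(\mu_{+}+\mu_{-})$, and it also gives the required uniformity in $|c|\le c_{0}$ and $\omega\in\U$, since only the modulus of $\omega$ and the bounded range of $c$ enter the estimates. Everything else is a two-step Neumann series.
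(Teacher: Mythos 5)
Your argument is correct, and it follows a genuinely different route from the paper's. The paper passes to the Hamiltonian reformulation $\dot z = JB_{c,s}z$, writes out the fundamental solution $\Phi_{c,s}$ explicitly, observes that non-degeneracy is equivalent to $\det\big(\omega A_d\Phi_{c,s}(T)-\Id\big)\neq 0$, and then evaluates this determinant asymptotically via a Schur-complement factorisation, arriving at the leading term $(-1)^n\omega^{2n}\big(2\cosh(\lambda T)\big)^n\bar\omega^{\,n}\det A\neq 0$. You instead stay with the scalar form $\ddot u = Nu$, parametrise the solution space by $(\alpha,\beta)\in\C^n\oplus\C^n$, and run a two-step Neumann/bootstrap argument on the resulting linear system. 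Both reduce the kernel condition to a finite-dimensional invertibility question. The paper's computation yields an explicit asymptotic formula for the determinant; yours avoids the symplectic bookkeeping entirely but has to track the competing exponential rates $e^{\pm T\mu_\pm}$ carefully, and your observation that $e^{T(\mu_+-\mu_-)}$ stays bounded uniformly in $|c|\le c_0$ is exactly the cancellation that lets the bootstrap close — this is the analogue of the $\sinh(\lambda T)/\sinh(\mu T)$ cancellations buried in the paper's Schur-complement step. One small slip worth flagging: after moving the $\alpha$-term to the left, the correct factorisation of $\Id-\omega A^{(s)}e^{T\sqrt N}$ is $-\omega A^{(s)}e^{T\sqrt N}\big(\Id-e^{-T\sqrt N}(\omega A^{(s)})^{-1}\big)$; the perturbing factor you wrote, $\omega^{-1}(A^{(s)})^{-1}e^{-T\sqrt N}$, is in the wrong order and the telescoping as stated fails since $A^{(s)}$ and $e^{-T\sqrt N}$ do not commute. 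The required norm bound $\|e^{-T\sqrt N}(\omega A^{(s)})^{-1}\|\le C e^{-T\mu_-}$ holds regardless of order, so the estimate and the conclusion are unaffected.
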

\begin{proof}
We consider the Morse-Sturm system
\[
\begin{cases}
-G \ddot u(t) + \big(c \Id + s G\big) u(t)=0, \quad t \in [0,T]\\
u(0)= \omega A u(T) , \quad  \dot u(0)= \omega A \dot u(T).
\end{cases}
\]
Then the corresponding Hamiltonian system is given by
\begin{equation}\label{eq:new-ham}
\dot z(t) = J B_{c,s} z(t), \qquad t \in [0,T]
\end{equation}
where $B_{c,s}\= \begin{bmatrix} G & 0 \\ 0 & -c\Id -sG \end{bmatrix}$
and the boundary condition is given  by $z(0)= A_d z(T)$. Denoting by $\Phi_{c,s}$ the fundamental
solution of the Hamiltonian system, it follows that the non-degeneracy of $\mathcal B_{c,s}$ is equivalent
to the following condition
\[
\det(\omega A_d \Phi_{c,s}(T)-\Id) \neq 0.
\]
If $s$ is sufficiently large, then $s \pm c$ is positive. We set
\[
\lambda \= \sqrt{s+c} \quad \textrm{ and } \quad \mu= \sqrt{s-c}.
\]
By a direct computation, we get
\[
\Phi_{c,s}(t)=
\begin{bmatrix}
\cosh(\lambda\, t)\Id & 0 & \lambda \sinh (\lambda\, t) \Id& 0\\
0 & \cosh( \mu \, t) \Id& 0 & -\mu \sinh(\mu\, t)\Id\\
\dfrac{\sinh(\lambda \, t)}{\lambda}\Id & 0 & \cosh(\lambda \, t)\Id & 0 \\
0 & -\dfrac{\sinh(\mu t)}{\mu}\Id & 0 & \cosh(\mu \, t)\Id
\end{bmatrix}, \qquad t \in [0,T].
\]
We observe that
\begin{multline}
(\omega A_d)^{-1}= \bar \omega A_d^{-1}= \bar \omega
\begin{bmatrix} \trasp{A} & 0 \\0 & A^{-1}\end{bmatrix}=
\begin{bmatrix} \bar \omega \trasp{A} & 0 \\0 & \bar \omega A^{-1}\end{bmatrix} \textrm{ and } \\
(\bar \omega \trasp{A}) G (\omega A) = G \Rightarrow \bar \omega \trasp{A} = G (\omega A)^{-1} G= G(\bar \omega A^{-1}) G.
\end{multline}
We let 
\[
\bar \omega A^{-1}\= \begin{bmatrix} P & Q \\ R & S \end{bmatrix}, \quad \textrm{ we immediately get that }  \bar \omega \trasp{A}= \begin{bmatrix} P &- Q \\ -R & S \end{bmatrix}.
 \]
In shorthand notation, we let

\begin{multline}
U\= \begin{bmatrix}
\cosh(\lambda T)\Id - P & Q \\
R &\cosh(\mu T)\Id - S
\end{bmatrix}, \quad V\= \begin{bmatrix}
\lambda \sinh(\lambda T)\Id  & 0 \\
0 &  -\mu\sinh(\mu T)\Id
\end{bmatrix}\\
X\= \begin{bmatrix}
\dfrac{\sinh(\lambda T)}{\lambda}\Id & 0 \\
0 & -\dfrac{\sinh(\mu T)}{\mu}\Id
\end{bmatrix},\quad Y\= \begin{bmatrix}
\cosh(\lambda T)\Id -P  & -Q \\
-R & \cosh(\mu T)\Id - S
\end{bmatrix}
 \end{multline}
and we observe that $U,V,X,Y$ are $n \times  n $ matrices. 
 Then $\displaystyle
 \Phi_{c,s}(T) - (\omega A_d)^{-1}= \begin{bmatrix}
 U & V\\ X & Y
 \end{bmatrix}$
  and hence
 \begin{align}
  \det \big(\omega A_d\Phi_{c,s}(T) - \Id \big)&= \det(\omega A_d) \cdot \det\big(\Phi_{c,s}(T)-(\omega A_d)^{-1}\big)\\
 &= \omega^{2n} \cdot \det A \cdot \det \traspm{A} \det\begin{bmatrix}U&V\\X&Y\end{bmatrix}\\
 &=(-1)^{n^2} \omega^{2n}\cdot \det \begin{bmatrix}U&V\\X&Y\end{bmatrix}=
(-1)^n \omega^{2n} \cdot \det\big( V\cdot(X-Y V^{-1} U)\big).
 \end{align}
 By a straightforward calculation, we get
  \begin{multline}
 V\cdot(X-Y V^{-1} U)\\=
 \begin{bmatrix}
-\big(\cosh(\lambda T) -P\big)^2-C_{\lambda,\mu}QR+\sinh^{2}(\lambda T)&
-\big(\cosh(\lambda T)-P\big)Q-C_{\lambda,\mu} Q\big(\cosh(\mu T)-S\big)\\
-C_{\lambda,\mu}^{-1} R \big(\cosh(\lambda T)-P\big)-\big(\cosh(\mu T)-S\big)R&
-\big(\cosh(\mu T)-S)^2-C_{\lambda,\mu}^{-1}RQ+\sinh^2(\mu T)
 \end{bmatrix}
 \end{multline}
 where we set $C_{\lambda,\mu}\=\dfrac{\lambda \sinh(\lambda T)}{\mu\sinh(\mu T)}$. Thus asymptotically, we get 
 following behavior
 \begin{multline}
 -\big(\cosh(\lambda T) -P\big)^2-C_{\lambda,\mu}QR+\sinh^{2}(\lambda T )= -1 + 2 \cosh(\lambda T) P-P^2
  - C_{\lambda,\mu} QR \sim_{+\infty} 2\cosh(\lambda T)P\\
  -\big(\cosh(\lambda T)-P\big)Q-C_{\lambda,\mu} Q\big(\cosh(\mu T)-S\big) \sim_{+\infty}  -2\cosh(\lambda T)Q\\
  -C_{\lambda,\mu}^{-1} R \big(\cosh(\lambda T)-P\big)-\big(\cosh(\mu T)-S\big)R \sim_{+\infty} -2\cosh(\lambda T)R\\
  -\big(\cosh(\mu T)-S)^2-C_{\lambda,\mu}^{-1}RQ+\sinh^2(\mu T)\sim_{+\infty} 2\cosh(\lambda T)S.
   \end{multline}
   Summing up, we have 
   \begin{equation}\label{eq:stima}
    V\cdot(X-Y V^{-1} U)\sim_{+\infty} 2 \cosh(\lambda T) \begin{bmatrix} P & -Q\\ -R & S \end{bmatrix} = 2\cosh(\lambda T) \bar \omega \trasp{A} .
   \end{equation}
   By taking into account Equation \eqref{eq:stima},  it holds that
   \begin{align}\label{eq:stima2}
   \det \big(\omega A_d \Phi_{c,s}(T)-\Id\big)&=(-1)^n \omega^{2n} \det\big(V(X-YV^{-1}U)\big) &\sim_{+\infty} (-1)^n\omega ^{2n}\det\big(2 \cosh(\lambda T)\bar \omega \trasp{A}\big)\\
   &= (-1)^n \omega^{2n} \big(2\cosh(\lambda T)\big)^n\bar\omega^n \det A\\& \neq 0.
   \end{align}
 By Equation \eqref{eq:stima2} the thesis readily follows. This concludes the proof.
 \end{proof}
In order to prove the non-degeneracy of the operator $\mathcal A_{1,s}^\omega$  for $s$ sufficiently large, we need the following stability result proved by Kato.
\begin{lem}\label{thm:lemma22}
Let $T$ be a selfadjoint operator and $A$ be symmetric. Then the operator $S\=T+A$ is
selfadjoint and
\[
\dist\big(\sigma(S), \sigma(T)\big) \leq \norm{A},
\]
where $dist(\cdot,\cdot)$ is the Hausdorff distance.
\end{lem}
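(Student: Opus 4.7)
The plan is to reduce this to the standard resolvent estimate for selfadjoint operators together with a Neumann-series perturbation argument, and then extract the Hausdorff bound by symmetry in the roles of $T$ and $S$.

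First I would establish that $S = T + A$ is selfadjoint. Since $A$ is symmetric and, in the intended application (and as implicit in the statement, where $\|A\|$ appears as the operator norm), bounded, the sum of a selfadjoint operator and a bounded symmetric (hence selfadjoint) operator on the same domain is selfadjoint; this is a direct consequence of the Kato--Rellich theorem (with relative bound $0$). No subtlety here beyond invoking the right theorem.

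The core step is the one-sided inclusion $\sigma(S) \subset \{\lambda \in \R : \dist(\lambda,\sigma(T)) \leq \|A\|\}$. Fix $\lambda \in \R$ with $\dist(\lambda,\sigma(T)) > \|A\|$. Since $T$ is selfadjoint, the spectral theorem gives
\begin{equation}
\opnorm{(T-\lambda)^{-1}}{\HH} = \frac{1}{\dist(\lambda,\sigma(T))} < \frac{1}{\|A\|}.
\end{equation}
Writing $S - \lambda = (T-\lambda)\bigl(I + (T-\lambda)^{-1} A\bigr)$, the perturbation $(T-\lambda)^{-1}A$ has norm strictly less than $1$, so $I + (T-\lambda)^{-1}A$ is invertible by a Neumann series and therefore so is $S-\lambda$. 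Hence $\lambda \in \rho(S)$, proving that every $\mu \in \sigma(S)$ satisfies $\dist(\mu,\sigma(T)) \leq \|A\|$.

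Finally, to upgrade this one-sided bound to the Hausdorff distance, I would swap the roles of $T$ and $S$: apply the same argument to the decomposition $T = S + (-A)$, noting that $S$ is selfadjoint (by the first step) and $-A$ is symmetric with the same norm $\|A\|$. This yields $\dist(\nu,\sigma(S)) \leq \|A\|$ for every $\nu \in \sigma(T)$, and combining the two bounds gives
\begin{equation}
\dist\bigl(\sigma(S),\sigma(T)\bigr) = \max\Bigl\{\sup_{\mu\in\sigma(S)}\dist(\mu,\sigma(T)),\; \sup_{\nu\in\sigma(T)}\dist(\nu,\sigma(S))\Bigr\} \leq \|A\|.
\end{equation}
The only mild obstacle is making sure the domain of $S$ is correctly identified with that of $T$ (so that the resolvent identity makes sense on the full space); once $A$ is treated as bounded on $\HH$ this is immediate, and the rest of the argument is routine.
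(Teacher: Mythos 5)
Your proof is correct, and it is essentially the standard argument: the paper does not spell out a proof but simply cites Kato (\cite[p.~291]{Kat80}), and the argument there is precisely the resolvent/Neumann-series estimate you give, symmetrized by interchanging the roles of $T$ and $S$. The only thing worth flagging is the (correctly noted) implicit hypothesis that $A$ is \emph{bounded} symmetric; once that is in place, selfadjointness of $S$ is immediate (Kato--Rellich is overkill, though harmless), the identity $\opnorm{(T-\lambda)^{-1}}{\HH}=1/\dist(\lambda,\sigma(T))$ follows from the spectral theorem, the factorization $S-\lambda=(T-\lambda)\bigl(I+(T-\lambda)^{-1}A\bigr)$ gives the one-sided inclusion, and swapping $T\leftrightarrow S$ gives the other, so the Hausdorff bound holds. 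No gaps.
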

\begin{proof}
For the proof, we refer the interested reader, to \cite[pag. 291]{Kat80}.
\end{proof}
\begin{cor}\label{thm:lemma3}
Let $c_0>0$ be such that $\norm{\widehat R}_{\mathscr L(\R^n)} \leq c_0$  and $s_0$ be the number related to $c_0$ by Lemma \ref{thm:lemma1}. Then for any $s \geq s_0$ it holds
\[
\mathcal A_{1,s}^\omega\=-G \dfrac{d^2}{dt^2} +\widehat R + s G
\]
is non-degenerate.
\end{cor}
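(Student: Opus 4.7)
The plan is to view $\mathcal A_{1,s}^\omega$ as a symmetric bounded perturbation of $\mathcal B_{0,s}$ and then invoke Kato's spectral stability estimate (Lemma \ref{thm:lemma22}). I write
\[
\mathcal A_{1,s}^\omega \;=\; \mathcal B_{0,s} + \widehat R,
\]
where $\widehat R$ denotes the multiplication operator by the symmetric matrix-valued function $\widehat R(t)$. As a multiplication operator on $L^2([0,T];\C^n)$, it is bounded and symmetric with operator norm at most $\|\widehat R\|_{L^\infty(0,T;\mathscr L(\R^n))}\leq c_0$, while $\mathcal B_{0,s}$ is selfadjoint on $E^2_\omega([0,T])$.

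The key point is to extract from Lemma \ref{thm:lemma1} a quantitative spectral gap for $\mathcal B_{0,s}$ around the origin. Since $\mathcal B_{c,s}=\mathcal B_{0,s}+c\,\Id$, the non-degeneracy of $\mathcal B_{c,s}$ on $E^2_\omega([0,T])$ is equivalent to $-c\notin\sigma(\mathcal B_{0,s})$. Lemma \ref{thm:lemma1} guarantees this for every $c\in[-c_0,c_0]$ as soon as $s\geq s_0$; hence
\[
\sigma(\mathcal B_{0,s}) \cap [-c_0,c_0] \;=\; \emptyset.
\]
Because $\sigma(\mathcal B_{0,s})$ is closed and $[-c_0,c_0]$ is compact, this disjointness upgrades to the strict inequality $\mathrm{dist}\bigl(0,\sigma(\mathcal B_{0,s})\bigr)> c_0$: any sequence $\mu_n\in \sigma(\mathcal B_{0,s})$ with $|\mu_n|\to c_0$ would accumulate at $\pm c_0$, which by closedness would then belong to $\sigma(\mathcal B_{0,s})\cap[-c_0,c_0]$, contradicting Lemma \ref{thm:lemma1}.

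Once the spectral gap is in hand, the conclusion is immediate from Lemma \ref{thm:lemma22}: taking $T=\mathcal B_{0,s}$ and perturbation $A=\widehat R$,
\[
\mathrm{dist}\bigl(\sigma(\mathcal A_{1,s}^\omega),\sigma(\mathcal B_{0,s})\bigr) \;\leq\; \|\widehat R\| \;\leq\; c_0 \;<\; \mathrm{dist}\bigl(0,\sigma(\mathcal B_{0,s})\bigr),
\]
so that $0\notin\sigma(\mathcal A_{1,s}^\omega)$, which is precisely the non-degeneracy asserted. The only part requiring more than bookkeeping is the passage from the qualitative disjointness $\sigma(\mathcal B_{0,s})\cap[-c_0,c_0]=\emptyset$ to a strict spectral gap greater than $c_0$, but this is handled by the closed/compact argument above.
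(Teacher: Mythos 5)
Your proof is correct and follows essentially the same route as the paper, which applies Lemma~\ref{thm:lemma22} with $T=-G\frac{d^2}{dt^2}+sG=\mathcal B_{0,s}$ and $A=\widehat R$. You have simply spelled out the step the paper leaves implicit, namely that Lemma~\ref{thm:lemma1} applied over all $|c|\leq c_0$ yields $\sigma(\mathcal B_{0,s})\cap[-c_0,c_0]=\emptyset$, hence by closedness a strict spectral gap $\mathrm{dist}\bigl(0,\sigma(\mathcal B_{0,s})\bigr)>c_0$, so that Kato's Hausdorff-distance bound keeps $0$ out of $\sigma(\mathcal A_{1,s}^\omega)$.
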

\begin{proof}
The proof of this result  follows by Lemma \ref{thm:lemma1} and Lemma \ref{thm:lemma22}, just by setting (with a slight abuse of notation)
\[
T\= -G \dfrac{d^2}{dt^2}  + s G \textrm{ and } A\=\widehat R.
\]
This concludes the proof.
\end{proof}
 \begin{rem}
We observe that in the Riemannian case, directly proof of this result can be easily conceived. However in the semi-Riemannian setting the abstract  way (which works in the Riemannian world) for simplifying this proof breaks-down. The obstruction to carry over that proof  is essentially based on the fact that the matrix $A$ coming from the trivialization is a $\mathfrak g$-orthogonal and not just an orthogonal matrix, like in the Riemannain case.
 \end{rem}

\section{A generalized spectral flow formula}\label{sec:index-theorem}

This section is devoted to the relation intertwining  the spectral index
and the geometric index. As a direct consequence we get a spectral flow
formula involving the $\omega$-spectral index and the geometric index.
The following spectral flow formula holds.
\begin{prop}\label{thm:lemma4}
Let $s_0$ be given in Lemma \ref{thm:lemma1}. Then, we have
\[
\spfl(\mathcal A^{\omega}_{1,s}; s \in [0,s_0]) =-\iota_1\big(\omega A_d\Psi_{1,s}(T); s \in [0,s_0]\big).
\]
\end{prop}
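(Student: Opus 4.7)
The natural strategy is to establish the identity crossing by crossing, via the standard \emph{crossing form} characterization of both the spectral flow and of the Maslov-type index $\iota_1$. By the homotopy invariance of both sides, I may first perturb $\widehat R$ by an arbitrarily small symmetric perturbation so that the path $s\mapsto \mathcal A^\omega_{1,s}$ has only regular crossings in $(0,s_0)$; by Corollary~\ref{thm:lemma3} the operator at $s=s_0$ is non-degenerate, so there is no contribution from that endpoint, and any contribution at $s=0$ is, by construction of the two indices, handled by the same half-signature convention on both sides of the claimed equality.

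The first step is to identify, at a crossing $s_\ast$, the two relevant kernels. If $u\in\ker\mathcal A^\omega_{1,s_\ast}$ then $u$ solves $-G\ddot u+(\widehat R+s_\ast G)u=0$ with $u(0)=\omega A u(T)$ and $\dot u(0)=\omega A \dot u(T)$. Setting $z(t)\=(G\dot u(t),u(t))$, the function $z$ solves the Hamiltonian system $\dot z=JD_{1,s_\ast}z$ and the boundary conditions read $z(0)=\omega A_d\, z(T)$, i.e.\ $z(0)\in\ker\bigl(\omega A_d\Psi_{1,s_\ast}(T)-\Id\bigr)$. The assignment $u\mapsto z(0)$ is a linear isomorphism between the two kernels.

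Next I compute the two crossing forms on this common kernel. Since $\partial_s\mathcal A^\omega_{1,s}=G$, the spectral flow crossing form is
\[
\Gamma_{\mathrm{sf}}(u)=\int_0^T \langle Gu(t),u(t)\rangle\,dt.
\]
For the symplectic path $\gamma(s)\=\omega A_d\Psi_{1,s}(T)$, the crossing form of $\iota_1$ on $\ker(\gamma(s_\ast)-\Id)$ is (up to the sign convention fixed in Appendix~\ref{sec:appendix})
\[
\Gamma_{M}(z_0)=-\,\sigma\bigl(\dot\gamma(s_\ast)\gamma(s_\ast)^{-1}z_0,\,z_0\bigr),\qquad \sigma(\cdot,\cdot)=\langle J\cdot,\cdot\rangle.
\]
Using variation of parameters for $\Psi_{1,s}$ and the explicit formula $\partial_s D_{1,s}=-\operatorname{diag}(0,G)$, I get
\[
\dot\gamma(s_\ast)\gamma(s_\ast)^{-1}=\omega A_d\Psi_{1,s_\ast}(T)\biggl(\int_0^T \Psi_{1,s_\ast}(t)^{-1}J\bigl(-\operatorname{diag}(0,G)\bigr)\Psi_{1,s_\ast}(t)\,dt\biggr)\Psi_{1,s_\ast}(T)^{-1}(\omega A_d)^{-1}.
\]
Substituting, using $\gamma(s_\ast)z_0=z_0$ and the symplectic character of $\Psi_{1,s_\ast}$, and unwinding via $z(t)=\Psi_{1,s_\ast}(t)z_0=(G\dot u(t),u(t))$, the resulting integral collapses to
\[
\Gamma_{M}(z_0)=-\int_0^T\langle Gu(t),u(t)\rangle\,dt=-\,\Gamma_{\mathrm{sf}}(u).
\]

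Summing the signatures of the crossing forms over all crossings in $[0,s_0]$ then gives
$\spfl(\mathcal A^\omega_{1,s};s\in[0,s_0])=-\iota_1(\omega A_d\Psi_{1,s}(T);s\in[0,s_0])$, as required.

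The main technical obstacle is the sign bookkeeping in the second crossing form: it requires a careful match between the convention used for $\iota_1$ (intersection with the hypersurface $\{\det(M-\Id)=0\}$, coorientation, and the normalization of $\sigma$) and the convention for spectral flow (sign of the derivative of eigenvalues through $0$). Getting these two conventions to align so as to produce precisely the minus sign (and not a plus sign) is the delicate part; once this is in place, the rest of the argument is a direct integration by parts using the Hamiltonian form of the Morse–Sturm system.
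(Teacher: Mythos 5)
Your proof is correct in substance and the final answer is the right one, but it takes a genuinely different route from the paper. The paper does \emph{not} compare the crossing form of $\spfl(\mathcal A^\omega_{1,s})$ directly with a crossing form for $\iota_1$. Instead it (i) rewrites $\iota_1(\omega A_d\Psi_{1,s}(T))$ as a $\iCLM$-index of a path of graph Lagrangians, (ii) invokes \cite[Lemmas 2.3--2.4]{HS09} to convert this to $-\spfl(\mathcal D^\omega_{1,s})$ where $\mathcal D^\omega_{1,s}=-J\tfrac{d}{dt}-D_{1,s}$ is an auxiliary first-order Hamiltonian operator path, and then (iii) checks that $\spfl(\mathcal A^\omega_{1,s})=\spfl(\mathcal D^\omega_{1,s})$ by observing that both crossing forms equal $\int_0^T\langle Gu,u\rangle\,dt$ --- a sign-safe comparison, since both sides are spectral flows of selfadjoint operator paths. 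You instead compute the Maslov-side crossing form of the symplectic path $s\mapsto\omega A_d\Psi_{1,s}(T)$ directly via variation of parameters and unwind it to $-\int_0^T\langle Gu,u\rangle\,dt$; as you yourself note, this requires pinning down which sign convention for the crossing form of $\iota_1$ is compatible with Definition~\ref{def:Maslov-index-ok}. That is the one real gap: the appendix only records a crossing-form characterization for $\iCLM$ (through Formula~\eqref{eq:laq} and the subsequent displayed formula), not for the Long index $\iota_1$ itself, so you would need to supply or cite that intermediate step (e.g.\ pass through Proposition~\ref{prop:L-C} to reduce $\iota_1$ to $\iCLM$, which in turn fixes the sign). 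Once that is done, your endpoint bookkeeping does work out: with $\Gamma_M=-\Gamma_{\mathrm{sf}}$ one has $\coindex[\Gamma_M]=\iMor[\Gamma_{\mathrm{sf}}]$ and vice versa, so the boundary terms of Proposition~\ref{thm:spectral-flow-formula} and of the $\iCLM$ crossing-form formula flip coherently and produce exactly $\iota_1=-\spfl(\mathcal A^\omega_{1,s})$. What your route buys is self-containedness (no auxiliary operator $\mathcal D$, no appeal to \cite{HS09}); what the paper's route buys is that the only crossing-form computation it must perform is between two spectral flows, where the orientation conventions are identical and there is nothing delicate to match.
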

\begin{proof}
For a given matrix $M\in L(\C^n)$, we denote its graph by $\Gr(M)\=\{\trasp{(x,\trasp{(Mx)})}\ |\ x\in \C^n\}$.  
By Definition \ref{def:Maslov-index} of  the $\iCLM$-index, we have 
\begin{align}
\iota_1\big(\omega A_d\Psi_{1,s}(T); s \in [0,s_0]\big)&= \iota_{\bar \omega}\big( A_d\Psi_{1,s}(T); s \in [0,s_0]\big)=
 \iota_{\omega}\big( A_d\Psi_{1,s}(T); s \in [0,s_0]\big)\\
 &= \iCLM\big(\Gr(\omega \Id), \Gr\big(A_d\Psi_{1,s}(T) \big); s \in [0,s_0]\big),
\end{align}
where the last equality follows from Equation \eqref{eq:non-citata} and Proposition \ref{prop:L-C}. By invoking \cite[Lemma 2.3 and Lemma 2.4]{HS09}, we have
\begin{equation}\label{eq:caju}
\iCLM\big(\Gr(\omega \Id), \Gr\big(A_d\Psi_{1,s}(T) \big); s \in [0,s_0]\big)= -\spfl(\mathcal D^\omega_{1,s}; s \in [0,s_0])
\end{equation}
where $\mathcal D_{c,s}\=-J \dfrac{d}{dt}- D_{c,s}(t); s \in [0,s_0]$ on the Sobolev space
\[
E^1_\omega([0,T])\=\Set{u\in W^{1,2}([0,T], \C^{2n})| u(0)=\omega A_d u(T)}.
\]
In order to concludes the proof, it is enough to prove that
\[
\spfl(\mathcal A^\omega_{1,s}; s \in [0,s_0])=
\spfl(\mathcal D^\omega_{1,s}; s \in [0,s_0]).
\]
For, we start to assume that both paths are regular in the sense specified in Appendix \ref{subsec:spectral-flow}.
Under this regularity assumption it is enough to show that the local contribution of the
spectral flow of both paths $\mathcal A^\omega_{1,s}$ and $\mathcal D_{1,s}^\omega$ coincide. Let $s_* \in [0,s_0]$  and
we start to observe that
\[
\dim \ker \mathcal A_{1,s_*} \neq \{0\} \iff \dim\ker \mathcal D_{1,s_*} \neq \{0\}.
\]
In particular $s_*$ is a crossing instant for $s\mapsto\mathcal A_{1,s}$ if and only if it is a crossing instant for $s\mapsto\mathcal D_{1,s}$.
Let $s_*$ be a crossing instant. By a direct computation, it follows that  the crossing form of $\mathcal A$ at $s_*$ is the quadratic form
\begin{equation}\label{eq:crossing1}
\Gamma(\mathcal A_{1,s}, s_*): \ker \mathcal A_{1,s} \to \R \textrm{ defined by }
\Gamma(\mathcal A_{1,s}, s_*)[u]= \int_0^T \langle Gu, u\rangle\, dt, \quad u \in \ker \mathcal A_{1,s_*}.
\end{equation}
The crossing form of $\mathcal D_{1,s}$ at $s_*$ is given by
\begin{multline}\label{eq:crossing2}
\Gamma(\mathcal D_{1,s}, s_*): \ker \mathcal D_{1,s} \to \R \textrm{ defined by }
\Gamma(\mathcal D_{1,s}, s_*)[w]= \int_0^T \left\langle \begin{bmatrix}0 & 0 \\ 0 & G\end{bmatrix}\begin{bmatrix} G \dot w\\ w\end{bmatrix},
\begin{bmatrix} G \dot w\\ w\end{bmatrix}\right \rangle\, dt \\
=\int_0^T \langle Gw, w \rangle\, dt \quad w \in \ker \mathcal D_{1,s_*}.
\end{multline}
By Equations \eqref{eq:crossing1}-\eqref{eq:crossing2}, we immediately conclude that the crossing forms
coincide and hence also their signatures. In particular the local contribution at $s_*$ of both paths to the spectral flow coincide and
this concludes the proof, under the assumption that $s\mapsto\mathcal A_{1,s}$ and $s\mapsto\mathcal D_{1,s}$ are regular.

In order to concludes the proof in the general case (i.e. for non-regular paths), we observe that by standard perturbation results there exists $\varepsilon >0$ sufficiently small
(cf. Appendix \ref{subsec:spectral-flow} and references therein) such that the perturbed path
$s \mapsto\mathcal A_{1,s}^\varepsilon\= \mathcal A_{1,s}+ \varepsilon \Id$ is regular and, by the homotopy invariance property with fixed ends,
it  has the same spectral flow of $s\mapsto \mathcal A_{1,s}$. Through the perturbed path $\mathcal A_{1,s}^\varepsilon$, we can define the perturbed
path of first order operators given by
\[
\mathcal D_{1,s}^\varepsilon \= -J \dfrac{d}{dt} - D_{1,s}^\varepsilon(t)
\]
where $D_{1,s}^\varepsilon(t)\= \begin{bmatrix}
G & 0 \\ 0 & -\widehat R(t)-sG +\varepsilon \Id
\end{bmatrix}$. Repeating ad verbatim the same arguments for  the perturbed paths, we get the thesis. This concludes the proof.
\end{proof}
Our next step is to compute the integer $\iota_1\big(\omega A_d \Psi_{0,s_0}(t); t \in [0,T]\big)$. Before introducing a technical
result needed for this computation, we observe that
\begin{align}\label{eq:suigrafici}
\iota_1\big(\omega A_d \Psi_{0,s_0}(t); t \in [0,T]\big)&= \iota_\omega\big( A_d \Psi_{0,s_0}(t); t \in [0,T]\big)\\
&=\iCLM\Big(\Gr\big(\omega A_d^{-1}\big), \Gr\big(\Psi_{0,s_0}(t)\big); t \in [0,T]\Big).
\end{align}
The  idea for computing the (RHS) of Equation \eqref{eq:suigrafici} is to transform the path of graphs of a symplectic matrix into a path of graphs of
a symmetric matrices. In this  way the computation of the $\iCLM$ index can be performed through the spectral flow.

\begin{lem}\label{thm:lemma0-2}
Under the previous notation, we have
\[
\iCLM\Big(\Gr\big(\omega A_d^{-1}\big), \Gr\big(\Psi_{0,s_0}(t)\big); t \in [0,T]\Big)= \iCLM\Big(L_D, \Gr\big(M_{s_0}(t)\big); t \in [0,T]\Big)
\]
where $L_D= \Set{\begin{bmatrix}x\\ 0\end{bmatrix}| x \in \C^{2n}}$ is the (horizontal) Dirichlet Lagrangian subspace
 and $t \mapsto M_{s_0}(t)$ is the path pointwise defined by
\[
M_{s_0}(t)\=\begin{bmatrix}
\dfrac{\sinh(\sqrt{s_0}t)}{\sqrt{s_0}\cosh(\sqrt{s_0}t)} G & \dfrac{I}{\cosh(\sqrt{s_0}t)}-\omega A^{-1}\\
\dfrac{I}{\cosh(\sqrt{s_0}t)}-\bar \omega \traspm{A} & -\sqrt{s_0}\dfrac{\sinh(\sqrt{s_0}t)}{\cosh(\sqrt{s_0}t)} G
\end{bmatrix}.
\]
\end{lem}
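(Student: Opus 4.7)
My plan is to exhibit a symplectic isomorphism $\mathcal{F}\colon\C^{4n}\to \C^{4n}$ --- viewed as an isomorphism from the symplectic structure for which graphs of elements of $\Sp(2n)$ are Lagrangian to the standard one (for which $L_D$ and graphs of Hermitian matrices are Lagrangian) --- that simultaneously sends $\Gr(\omega A_d^{-1})$ to $L_D$ and $\Gr(\Psi_{0,s_0}(t))$ to $\Gr(M_{s_0}(t))$. The equality of the two $\iCLM$-indices will then follow from the naturality of $\iCLM$ under symplectic isomorphisms.

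The first step is to compute the fundamental solution $\Psi_{0,s_0}$ of $\dot z = J D_{0,s_0} z$ with $D_{0,s_0}= \bigl[\begin{smallmatrix} G & 0 \\ 0 & -s_0 G\end{smallmatrix}\bigr]$. Using $G^{2}=\Id$, the system decouples into two hyperbolic second-order equations, yielding the block form
\[
\Psi_{0,s_0}(t) = \begin{bmatrix}\cosh(\sqrt{s_0}\,t)\,\Id & \sqrt{s_0}\sinh(\sqrt{s_0}\,t)\,G \\[2pt] \tfrac{\sinh(\sqrt{s_0}\,t)}{\sqrt{s_0}}\,G & \cosh(\sqrt{s_0}\,t)\,\Id\end{bmatrix},
\]
which is invertible for every $t\in [0,T]$ since $\cosh(\sqrt{s_0}\,t)\neq 0$.

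An intersection $\Gr(\omega A_d^{-1})\cap \Gr(\Psi_{0,s_0}(t))$ corresponds to $\ker\bigl(\Psi_{0,s_0}(t) - \omega A_d^{-1}\bigr)$. I would then construct $\mathcal{F}$ so that left multiplication by an invertible block matrix built from $\cosh(\sqrt{s_0}\,t)^{-1}\Id$ and $\omega A_d^{-1}$ converts $\Psi_{0,s_0}(t) - \omega A_d^{-1}$ into the Hermitian matrix $M_{s_0}(t)$ in the statement, preserving its kernel. The Hermitian symmetry of the resulting matrix --- in particular, the interchange of $\omega$ in the $(1,2)$ block and $\bar\omega$ in the $(2,1)$ block --- is forced by the $\mathfrak g$-orthogonality $\trasp{A}\, G\, A = G$, i.e.\ $\traspm{A} = G\, A^{-1}\, G$, together with the block structure $A_d=\bigl[\begin{smallmatrix}\traspm{A} & 0 \\ 0 & A\end{smallmatrix}\bigr]$.

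The main obstacle is the explicit block computation verifying that the symmetrization produces $M_{s_0}(t)$ entry by entry; this hinges on the identities $\cosh^{2}-\sinh^{2}=1$, $G^{2}=\Id$, and the symplecticity $\trasp{A_d}\, J\, A_d = J$. Once this is completed, the invariance of the $\iCLM$-index under symplectic isomorphisms (see Appendix \ref{sec:appendix}) yields the claimed equality of indices.
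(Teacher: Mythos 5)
Your plan coincides with the paper's argument: the symplectomorphism $\mathcal{F}$ you seek is precisely the composition of a constant permutation matrix $S$ which the paper introduces (carrying $(-J)\oplus J$ to the standard symplectic form $\widetilde J$ and allowing both Lagrangian paths to be re\-expressed as graphs of symmetric matrices $Z_\omega$ and $Z_{s_0}(t)$) with the shear sending $\Gr(Z_\omega)$ to $L_D$, so that $M_{s_0}(t)=Z_{s_0}(t)-Z_\omega$ and the identity of the two $\iCLM$-indices follows from symplectic invariance, exactly as you say. One small caveat: the passage from $\Psi_{0,s_0}(t)-\omega A_d^{-1}$ to $M_{s_0}(t)$ is not a pure left multiplication, but rather of the form $d\,\big(\Psi_{0,s_0}(t)-\omega A_d^{-1}\big)\big(a+b\,\Psi_{0,s_0}(t)\big)^{-1}$ for appropriate blocks $a,b,d$ of $\mathcal{F}$, so the intersections are carried over by a $t$-dependent isomorphism rather than the kernel being literally preserved as a subspace — a harmless imprecision, since the crossing data are still matched by the symplectomorphism.
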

\begin{rem}\label{rem:idimu}
We observe that  if $P\in \Sp(2n)$, then
\[
\Gr(P)=\Set{\begin{bmatrix}x \\ Px\end{bmatrix}|x \in \C^{2n}}
\]
is a Lagrangian subspace of $\C^{2n} \oplus \C^{2n}$ under the symplectic form $-J \oplus J$, where $J$ is the standard symplectic matrix of $\C^{2n}$. We recall
that a {\em Lagrangian  frame}  for a Lagrangian subspace $L$ is an injective linear map $Z: \C^n \to \C^{2n}$  whose image is $L$. Such a frame has the form $\displaystyle Z= \trasp{(X,Y)}$
where $X,Y$ are $n \times n$-matrices and $Y^*X=X^*Y$,
where $*$ denotes the conjugate transpose.
In the special case in which $L$ is a graph of a symmetric matrix, then $X=\Id$ and the  relation $Y^*X=X^*Y$ trivially holds. We observe also that if $X$ is
invertible, then another Lagrangian frame for $L$ with respect to which it is a graph, is given by $\displaystyle
W= \trasp{ (\Id, YX^{-1})}$.  In fact, being $L = \im(Z)=\Set{\begin{bmatrix}Xu\\ Yu\end{bmatrix}|u \in \C^n}$, it follows that by changing coordinates by
setting $u=X^{-1}w$, we get $L = \im(Z)=\Set{\begin{bmatrix}w\\ YX^{-1}w\end{bmatrix}|u \in \C^n}$. We start to observe that
if $L$ is a Lagrangian subspace  with respect to a  symplectic form $ \widehat \omega (\cdot, \cdot)\=\langle \widehat J \cdot, \cdot \rangle$,
then  for any orthogonal matrix $S$, the subspace $S^{-1}L$ is a Lagrangian subspace with respect to the symplectic
form  $\widetilde \omega (\cdot, \cdot)\=\langle \widetilde J \cdot, \cdot \rangle$
represented  by $\widetilde J\= \trasp{S}\widehat J S$. Moreover, if  $\begin{bmatrix}Y \\X\end{bmatrix}$ is a Lagrangian frame for $L$, then
$S^{-1}\begin{bmatrix}Y \\X\end{bmatrix}$ is a Lagrangian frame for $S^{-1}L$.
\end{rem}
\begin{proof}
We start to define the orthogonal matrix $S$ given by
$\displaystyle
S\=
\begin{bmatrix}
0 & 0 & \Id & 0 \\
0 & \Id  & 0 & 0 \\
0 & 0 & 0 & \Id\\
\Id  & 0 & 0 & 0
\end{bmatrix}.
$
Identifying  $\Gr\big(\Psi_{0,s_0}(t)\big)$ with its Lagrangian frame $\begin{bmatrix}
\Id & 0 \\
0 & \Id\\
\cosh(\sqrt{s_0}t) \Id& \sqrt{s_0} \sinh(\sqrt{s_0} t) G\\
\dfrac{1}{\sqrt{s_0}} \sinh(\sqrt{s_0}t) G & \cosh(\sqrt{s_0} t)\Id
\end{bmatrix}$, we get
\begin{equation}
S\Gr\big(\Psi_{0,s_0}(t)\big)=
\begin{bmatrix}
\cosh(\sqrt{s_0}t) \Id& \sqrt{s_0} \sinh(\sqrt{s_0} t) G\\
0 & \Id\\
\dfrac{1}{\sqrt{s_0}} \sinh(\sqrt{s_0}t) G & \cosh(\sqrt{s_0} t)\Id\\
\Id & 0 \\
\end{bmatrix}
\end{equation}
is a Lagrangian subspace with respect to the standard symplectic form $\widetilde J$ of $\C^{2n}\oplus \C^{2n}$. To do so,
it is enough to observe that by a direct computation, we have
\begin{equation}
\widetilde J = S \big(-J \oplus J\big)\trasp{S}=
\begin{bmatrix}
0 & 0 &-\Id&0 \\
0 & 0& 0 & -\Id \\
\Id & 0 & 0 & 0 \\
0 & \Id & 0 & 0\\
\end{bmatrix}.
\end{equation}
By taking into account Remark \ref{rem:idimu} and by observing that  the matrix
\[
\begin{bmatrix}
\cosh(\sqrt{s_0}t) \Id& \sqrt{s_0} \sinh(\sqrt{s_0} t) G\\
0 & \Id
\end{bmatrix}
\]
is invertible, we can re-write the Lagrangian subspace $S\Gr\big(\Psi_{0,s_0}(t)\big)$ as the graph of a symmetric matrix, simply by
change the Lagrangian  frame. To do so, we observe that
\begin{multline}
\begin{bmatrix}
\dfrac{1}{\sqrt{s_0}} \sinh(\sqrt{s_0}t) G & \cosh(\sqrt{s_0} t)\Id\\
\Id & 0 \\
\end{bmatrix}\cdot
\begin{bmatrix}
\cosh(\sqrt{s_0}t) \Id& \sqrt{s_0} \sinh(\sqrt{s_0} t) G\\
0 & \Id\\
\end{bmatrix}^{-1}\\
=\begin{bmatrix}
\dfrac{\sinh(\sqrt{s_0}t)}{\sqrt{s_0}\cosh(\sqrt{s_0}t)} G & \dfrac{1}{\cosh(\sqrt{s_0}t)}\Id\\
\dfrac{1}{\cosh(\sqrt{s_0}t)} \Id& -\sqrt{s_0}\dfrac{\sinh(\sqrt{s_0}t)}{\cosh(\sqrt{s_0}t)} G
\end{bmatrix}
\end{multline}
Thus the Lagrangian frame for $S\Gr\big(\omega \Psi_{0,s_0}(t)\big)$ fits into the following
\[
\begin{bmatrix}
\Id&0\\0&\Id\\
\dfrac{\sinh(\sqrt{s_0}t)}{\sqrt{s_0}\cosh(\sqrt{s_0}t)} G & \dfrac{1}{\cosh(\sqrt{s_0}t)}\Id\\
\dfrac{1}{\cosh(\sqrt{s_0}t)} \Id& -\sqrt{s_0}\dfrac{\sinh(\sqrt{s_0}t)}{\cosh(\sqrt{s_0}t)} G
\end{bmatrix}
\]
By the very same computations for the Lagrangian subspace $\Gr\big(\omega A_d^{-1}\big)$, we get
\[
S\Gr\big(\omega A_d^{-1}\big)= \begin{bmatrix}
0 & 0 & \Id & 0 \\
0 & \Id& 0 & 0 \\
0 & 0 & 0 & \Id\\
\Id & 0 & 0 & 0
\end{bmatrix}\cdot
\begin{bmatrix}
\Id & 0 \\
0 & \Id\\
\omega \trasp{A}&0\\
0 &\omega A^{-1}
\end{bmatrix} =
\begin{bmatrix}
\omega \trasp{A} &0\\
0 & \Id\\
0 & \omega A^{-1}\\
\Id & 0 \\
\end{bmatrix}.
\]
In this way the Lagrangian subspace $\Gr\big(\omega A_d^{-1}\big)$ can be re-written as follows
\[
\begin{bmatrix}
\Id&0\\
0&\Id\\
0&\omega A^{-1}\\
\bar \omega \traspm{A} & 0
\end{bmatrix}
\]
Summing up, we proved that
\[
\iCLM\Big(\Gr\big(\omega A_d^{-1}\big), \Gr\big(\Psi_{0,s_0}(t)\big); t \in [0,T]\Big)= \iCLM\Big(\Gr\big(Z_\omega\big), \Gr\big(Z_{s_0}(t)\big); t \in [0,T]\Big)
\]
where
\[
Z_\omega \=\begin{bmatrix}
0&\omega A^{-1}\\
\bar \omega \traspm{A} & 0
\end{bmatrix}\quad \textrm{ and } \quad Z_{s_0}(t)\=
\begin{bmatrix}
\dfrac{\sinh(\sqrt{s_0}t)}{\sqrt{s_0}\cosh(\sqrt{s_0}t)} G & \dfrac{1}{\cosh(\sqrt{s_0}t)}\Id\\
\dfrac{1}{\cosh(\sqrt{s_0}t)} \Id& -\sqrt{s_0}\dfrac{\sinh(\sqrt{s_0}t)}{\cosh(\sqrt{s_0}t)} G
\end{bmatrix}.
\]
Now the conclusion follows by
\[
\iCLM\Big(\Gr\big(Z_\omega\big), \Gr\big(Z_{s_0}(t)\big); t \in [0,T]\Big)= \iCLM\Big(L_D, \Gr\big(Z_{s_0}(t)- Z_\omega\big); t \in [0,T]\Big)
\]
once observed that $Z_{s_0}(t)- Z_\omega=:M_{s_0}(t)$. This concludes the proof.
\end{proof}
\begin{lem}\label{thm:lemma2-2}
For every $t_0 \in (0,T]$, there exists  $s_0>0$ sufficiently large such that $M_{s_0}(t)$ is non-degenerate and
\[
 \ncind{M_{s_0}(t)}= \nind{M_{s_0}(t)} \textrm{ for any } t \in [t_0, T]
\]
 where $\ncind{*}$ and  $\nind{*}$ denotes respectively  the coindex and the index of $*$.
\end{lem}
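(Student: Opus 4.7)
The plan is to compute the inertia of $M_{s_0}(t)$ by Haynsworth additivity applied to the invertible bottom-right block, and then to take the limit $s_0\to+\infty$, exploiting the $\mathfrak g$-orthogonality relation $\trasp{A}GA=G$ (which, together with $G^{2}=I$, yields $A^{-1}G\traspm{A}=G$). Introducing the scalars $a(s_0,t)\=\tfrac{\tanh(\sqrt{s_0}t)}{\sqrt{s_0}}$, $b(s_0,t)\=\tfrac{1}{\cosh(\sqrt{s_0}t)}$ and $c(s_0,t)\=\sqrt{s_0}\tanh(\sqrt{s_0}t)$, I would write
\[
M_{s_0}(t)=\begin{bmatrix} aG & bI-\omega A^{-1}\\ bI-\bar\omega\traspm{A} & -cG\end{bmatrix}.
\]
For $t\in[t_0,T]$ one has $\tanh(\sqrt{s_0}t)\geq\tanh(\sqrt{s_0}t_0)>0$, so the $(2,2)$-block $-cG$ is invertible, with signature $(q,p)$, where $(p,q)$ denotes the signature of $G$ (thus $p+q=n$ in both the spacelike and the timelike setting). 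A direct expansion, using $\omega\bar\omega=1$ and $A^{-1}G\traspm{A}=G$, reduces the Schur complement of this block to
\[
\Sigma(s_0,t)= aG + \tfrac{1}{c}\bigl(bI-\omega A^{-1}\bigr)G\bigl(bI-\bar\omega\traspm{A}\bigr)=\tfrac{1}{c}\Bigl[(ac+1+b^{2})G-b\bigl(\omega A^{-1}G+\bar\omega G\traspm{A}\bigr)\Bigr].
\]

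The key observation is then that, uniformly in $t\in[t_0,T]$, as $s_0\to+\infty$ we have $b\to 0$, $c\to+\infty$ and $ac=\tanh^{2}(\sqrt{s_0}t)\to 1$; hence $c\,\Sigma(s_0,t)\to 2G$ uniformly on $[t_0,T]$. Since $G$ is nonsingular and the inertia is locally constant on the open set of nonsingular Hermitian matrices, there exists $s_0=s_0(t_0)>0$ such that, for every $t\in[t_0,T]$, $c\,\Sigma(s_0,t)$, and hence $\Sigma(s_0,t)$ itself (because $c>0$), is nonsingular and inherits the signature $(p,q)$ of $G$. Haynsworth's inertia additivity then yields that $M_{s_0}(t)$ is nonsingular and
\[
\bigl(\ncind{M_{s_0}(t)},\nind{M_{s_0}(t)}\bigr)=\mathrm{sig}(-cG)+\mathrm{sig}\bigl(\Sigma(s_0,t)\bigr)=(q,p)+(p,q)=(n,n),
\]
so in particular $\ncind{M_{s_0}(t)}=\nind{M_{s_0}(t)}=n$, as claimed.

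The main technical point will be the uniformity in $t\in[t_0,T]$ of the asymptotics above, and this is precisely where the constraint $t_0>0$ is essential: if $t_0=0$ were allowed then $c\to 0$ and the $(2,2)$-block would degenerate, making the whole Schur complement approach collapse. This uniformity is however automatic, since $\tanh(\sqrt{s_0}\,\cdot\,)$ and $\mathrm{sech}(\sqrt{s_0}\,\cdot\,)$ converge to $1$ and $0$ uniformly on any closed interval bounded away from the origin.
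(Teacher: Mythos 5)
Your proof is correct and follows essentially the same strategy as the paper's: both reduce $M_{s_0}(t)$ to a block-diagonal form via a Schur-complement argument (the paper conjugates by the upper-triangular matrix $N(t)$ to eliminate the off-diagonal blocks against the $(1,1)$-block $a(s_0,t)G$, whereas you eliminate against the $(2,2)$-block $-c(s_0,t)G$ and invoke Haynsworth additivity), exploit the identity $A^{-1}G\traspm{A}=G$, and let $s_0\to\infty$ so the off-diagonal perturbation vanishes uniformly on $[t_0,T]$, leaving two blocks that are positive and negative multiples of $G$ and hence have opposite inertias. The choice of which block to complement against is immaterial, and your observation that $ac+1+b^{2}=2$ holds identically (not merely asymptotically) is a small simplification.
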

\begin{proof}
For every $t \neq 0$ the matrix $\dfrac{\sinh(\sqrt{s_0}t)}{\sqrt{s_0}\cosh(\sqrt{s_0}t)} G$ is invertible. We let
\[
N(t)\=\begin{bmatrix}
\Id & -\dfrac{\sqrt{s_0}\cosh(\sqrt{s_0}t)}{\sinh(\sqrt{s_0}t)} G \left(\dfrac{I}{\cosh(\sqrt{s_0}t)}-\omega A^{-1}\right)\\
0 & \Id
\end{bmatrix}
\]
and we observe that
\begin{multline}
N^*(t)M_{s_0}(t)N(t)\\=
\begin{bmatrix}
\dfrac{1}{\sqrt{s_0}}\tanh(\sqrt{s_0t}) G & 0 \\
0 &- \sqrt{s_0}\dfrac{1}{\sinh(\sqrt{s_0}t) }\Big(2 \cosh(\sqrt{s_0} t)G-\bar \omega \traspm{A} G -\omega G A^{-1}\Big)
\end{bmatrix}.
\end{multline}
Thus
\begin{multline}
\det M_{s_0}(t)=\det\big(N^*(t)M_{s_0}(t)N(t)\big)=
\det\left(-2G+ \dfrac{1}{\cosh(\sqrt{s_0}t) }\big(\bar \omega \traspm{A} G + \omega G A^{-1}\big)\right).
\end{multline}
Since for $t \in [t_0, T]$ it holds that
\[
\det M_{s_0}(t) \sim_{+\infty} \det(-2 G )\neq 0,
\]
then $M_{s_0}(t)$ is non-degenerate for $t \in [t_0,T]$. We observe also that
\[
N^*(t)M_{s_0}(t)N(t)\sim_{+\infty}
\begin{bmatrix}
\dfrac{1}{\sqrt{s_0}}\tanh(\sqrt{s_0t}) G & 0 \\
0 & - 2\sqrt{s_0}\coth(\sqrt{s_0} t) G
\end{bmatrix}.
\]
Since the index and the coindex of
the two matrices
$M_{s_0}(t)$ and $N^*(t)M_{s_0}(t)N(t)$ agree. This concludes the proof.
\end{proof}

\begin{lem}\label{thm:lemma1-2}
For any complex matrix $B$, we let
\[
M\= \begin{bmatrix}
0 & B\\ B^* & 0
\end{bmatrix}
\]
where we denoted by $B^*$ the conjugate transpose of $B$. Then we have
\[
\nnull{M}= 2 \dim \ker B\quad \textrm{ and }\quad \ncind{M}= \nind{M}
\]
and where $\nnull{*}$ denotes the nullity of $*$.
\end{lem}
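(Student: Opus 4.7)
The plan is to observe that $M$ is Hermitian, then compute the kernel directly from the block structure, and finally establish the index/coindex equality via a unitary conjugation that sends $M$ to $-M$.

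First I would verify that $M$ is indeed selfadjoint, since $M^{*}=\begin{bmatrix} 0 & (B^{*})^{*} \\ B^{*} & 0 \end{bmatrix}=M$, so it makes sense to speak of its nullity, index, and coindex. For the nullity, a vector $\begin{bmatrix} x \\ y \end{bmatrix}$ lies in $\ker M$ if and only if $By=0$ and $B^{*}x=0$, which shows that $\ker M = \ker B^{*}\oplus \ker B$ as an internal direct sum of mutually orthogonal blocks. Since $B$ is (implicitly) square in the application of Lemma \ref{thm:lemma0-2}, the standard rank identity $\mathrm{rank}\,B=\mathrm{rank}\,B^{*}$ gives $\dim\ker B=\dim\ker B^{*}$, hence $\nnull{M}=2\dim\ker B$, as claimed.

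For the equality $\ncind{M}=\nind{M}$, the key observation is that the spectrum of $M$ is symmetric about the origin. To see this, I would introduce the unitary involution
\[
U\=\begin{bmatrix} I & 0 \\ 0 & -I \end{bmatrix},
\]
which satisfies $U^{*}=U=U^{-1}$, and a direct block computation gives
\[
U^{*}M U=\begin{bmatrix} 0 & -B \\ -B^{*} & 0 \end{bmatrix}=-M.
\]
Consequently $M$ and $-M$ are unitarily equivalent, so their spectra (counted with multiplicity) coincide. Equivalently, if $\lambda$ is an eigenvalue of $M$ with eigenvector $v$, then $-\lambda$ is an eigenvalue of $M$ with eigenvector $Uv$, and the assignment $v\mapsto Uv$ gives a bijection between the positive and negative spectral subspaces. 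This yields $\ncind{M}=\nind{M}$ and concludes the proof.

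I do not expect any real obstacle here: both claims reduce to elementary block matrix manipulations, and the only slightly non-obvious input is the choice of the sign-flipping unitary $U$. The reason this simple statement is recorded as a lemma is that it will be invoked later to read off the signature of symmetric matrices with off-diagonal block structure, such as those appearing in Lemma \ref{thm:lemma2-2} through the leading-order expansion of $M_{s_{0}}(t)$.
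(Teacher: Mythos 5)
Your proof is correct and complete: the paper simply states that the lemma ``readily follows by a direct computation'' without supplying details, and your argument (block computation of the kernel plus the sign-flipping conjugation by $U=\mathrm{diag}(I,-I)$ giving $U^{*}MU=-M$) is the natural way to carry it out. You are also right to flag that $B$ must be square for $\nnull{M}=2\dim\ker B$ to hold, which is the case in the intended application where $B=\Id-\omega A^{-1}$.
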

\begin{proof}
The proof of this result, readily follows by a direct computation.
\end{proof}

\begin{prop}\label{thm:lemma5}
Under the previous notation, for every $\omega \in \U$, we have
\[
\iota_1(\omega A_d \Psi_{0, s_0}(t); t \in [0,T])= \dim \ker (A-\omega \Id).
\]
\end{prop}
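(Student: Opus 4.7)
The plan is to chain together Lemmas \ref{thm:lemma0-2}, \ref{thm:lemma2-2} and \ref{thm:lemma1-2} in order to reduce the computation to a boundary contribution at $t=0$. First, by the reformulation \eqref{eq:suigrafici} combined with Lemma \ref{thm:lemma0-2}, the quantity of interest equals
$$\iota_1\big(\omega A_d \Psi_{0,s_0}(t);\, t \in [0,T]\big) = \iCLM\big(L_D,\, \Gr(M_{s_0}(t));\, t \in [0,T]\big),$$
where $L_D$ is the horizontal Dirichlet Lagrangian. In this way the Maslov-type index of a path of symplectic matrices is converted into the CLM index of a path of graphs of the \emph{symmetric} matrices $M_{s_0}(t)$, which lends itself to a spectral-flow style computation.

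Second, the intersection $L_D \cap \Gr(M_{s_0}(t))$ is isomorphic to $\ker M_{s_0}(t)$, so the crossings of the path with the Maslov cycle based at $L_D$ correspond exactly to the times at which $M_{s_0}(t)$ is degenerate. By Lemma \ref{thm:lemma2-2}, choosing $t_0$ arbitrarily small and $s_0$ correspondingly large, $M_{s_0}(t)$ is nondegenerate throughout the open interval $(0,T]$. Hence the only crossing of the path with $L_D$ is at the initial endpoint $t = 0$, and the CLM index reduces to the single boundary contribution at that point.

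Third, I carry out the boundary analysis. Setting $t = 0$ in the explicit expression for $M_{s_0}(t)$ yields
$$M_{s_0}(0) = \begin{bmatrix} 0 & \Id - \omega A^{-1} \\ \Id - \bar\omega \traspm{A} & 0 \end{bmatrix},$$
which is exactly of the block-antidiagonal form of Lemma \ref{thm:lemma1-2} with $B = \Id - \omega A^{-1}$. Noting that $\ker(\Id - \omega A^{-1}) = \ker(A - \omega \Id)$, that lemma gives $\nnull{M_{s_0}(0)} = 2\dim\ker(A-\omega\Id)$ together with $\nind{M_{s_0}(0)} = \ncind{M_{s_0}(0)}$. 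A direct differentiation produces the crossing form
$$\dot M_{s_0}(0) = \diag(G, -s_0 G).$$
Moreover, Lemma \ref{thm:lemma2-2} forces $\nind{M_{s_0}(t)} = \ncind{M_{s_0}(t)} = n$ for every $t \in (0,T]$. Combining these two pieces of signature data, the $2\dim\ker(A-\omega\Id)$ zero eigenvalues of $M_{s_0}(0)$ must split symmetrically as $t$ increases from $0$ into $\dim\ker(A-\omega\Id)$ positive and $\dim\ker(A-\omega\Id)$ negative eigenvalues. Feeding this into the crossing-form formula for the CLM index at the initial endpoint produces the announced value $\dim\ker(A-\omega\Id)$.

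The main obstacle is the non-regular character of this unique crossing, which sits at the boundary $t = 0$: the form $\dot M_{s_0}(0)|_{\ker M_{s_0}(0)}$ mixes the indefinite matrix $G$ with the large parameter $s_0$, so a brute-force signature computation is opaque. The resolution bypasses this difficulty by invoking the joint rigidity of Lemmas \ref{thm:lemma1-2} and \ref{thm:lemma2-2}---the identity $\nind = \ncind$ both at $t = 0$ and throughout $(0,T]$---which forces the even splitting of the zero eigenvalues and pins the boundary contribution to exactly $\dim\ker(A-\omega\Id)$.
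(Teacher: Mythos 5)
Your proposal reproduces the paper's own proof: you reduce via Lemma~\ref{thm:lemma0-2} to the CLM index (equivalently, the finite--dimensional spectral flow) of the path of symmetric matrices $t \mapsto M_{s_0}(t)$ against the Dirichlet Lagrangian $L_D$, and then pin the value by combining Lemma~\ref{thm:lemma1-2} at $t=0$ with Lemma~\ref{thm:lemma2-2} away from $t=0$. The \emph{even splitting} argument you spell out --- the $2\dim\ker(A-\omega\Id)$ null eigenvalues of $M_{s_0}(0)$ must distribute equally into positive and negative ones because $\nind{M_{s_0}(t)}=\ncind{M_{s_0}(t)}$ holds both at $t=0$ and for later $t$ --- is exactly the step the paper compresses into ``By this, we infer,'' so your write-up is in fact a bit more explicit than the source. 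One caveat worth flagging: Lemma~\ref{thm:lemma2-2} is stated with the quantifier order \emph{for every $t_0\in(0,T]$ there exists $s_0$}, and does not by itself assert that a single $s_0$ renders $M_{s_0}$ nondegenerate on the entire interval $(0,T]$; your phrasing (``choosing $t_0$ arbitrarily small and $s_0$ correspondingly large, $M_{s_0}(t)$ is nondegenerate throughout $(0,T]$'') suggests otherwise, and $s_0$ must in any case be fixed large enough so that Corollary~\ref{thm:lemma3} applies elsewhere in the argument. However, the paper's own proof takes exactly the same implicit leap --- it invokes nondegeneracy only on $[t_0,T]$ and then concludes the spectral flow on all of $[0,T]$ --- so this is a shared subtlety of the source rather than a defect specific to your proposal.
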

\begin{proof}
We start to observe that
\[\iota_1(\omega A_d \Psi_{0, s_0}(t); t \in [0,T])= \iCLM\Big(\Gr\big(\omega A_d^{-1}\big), \Gr\big(\Psi_{0,s_0}(t)\big); t \in [0,T]\Big).
\]
Then in order to conclude, it
is enough to compute $ \iCLM\Big(\Gr\big(\omega A_d^{-1}\big), \Gr\big(\Psi_{0,s_0}(t)\big); t \in [0,T]\Big)$. By Lemma \ref{thm:lemma0-2}, we have
\[
 \iCLM\Big(\Gr\big(\omega A_d^{-1}\big), \Gr\big(\Psi_{0,s_0}(t)\big); t \in [0,T]\Big)= \iCLM\Big(L_D, \Gr\big(M_{s_0}(t)\big); t \in [0,T]\Big)
\]
 $M_{s_0}(t)$ has been defined in Lemma \ref{thm:lemma1-2}. Moreover
for $t=0$, the matrix $M_{s_0}(t)$ reduces to
\[
M_{s_0}(0)= \begin{bmatrix}
0 & \Id -\omega A^{-1}\\
\Id-\bar\omega \traspm{A} & 0
\end{bmatrix}.
\]
By Lemma \ref{thm:lemma1-2}, we have
\[
\nnull{M_{s_0}(0)}= 2 \dim \ker(A-\omega \Id), \qquad \ncind{M_{s_0}(0)}= \nind{M_{s_0}(0)}.
\]
By Lemma \ref{thm:lemma2-2}, there exists $t_0$ and $s_0$ such that $M_{s_0}(t)$ is non-degenerate for $t \in [t_0,T]$ and
$ \ncind{M_{s_0}(t)}= \nind{M_{s_0}(t)}$ for every $t \in [t_0,T]$. By this, we infer that
\[
\spfl(M_{s_0}(t); t \in [0,T]) = \dim \ker (A-\omega \Id).
\]
In order to conclude, it is enough  to observe that
\[
\iCLM\Big(L_D, \Gr\big(M_{s_0}(t)\big); t \in [0,T]\Big)= \spfl(M_{s_0}(t); t \in [0,T]).
\]
This concludes the proof.
\end{proof}
By using Proposition \ref{thm:lemma5} we are now in position to prove Theorem \ref{thm:Lemma6}.
\paragraph{ Proof of Theorem \ref{thm:Lemma6}.}For $c \in [0,1]$ and $s \in [0,s_0]$
we start to consider the path $s \mapsto  \mathcal A^\omega_{c,s}$ given in Equation \eqref{eq:Acs}, namely
\begin{equation}
\mathcal A^\omega_{c,s}=-G \dfrac{d^2}{dt^2} + c \widehat R(t)+ sG, \qquad t \in [0,T]
\end{equation}
on  the Hilbert  space $E^2_\omega([0,T])$ and defined in Equation \eqref{spazio-Acs}. The corresponding Morse-Sturm
system is given by
\begin{equation}
-G \ddot u(t) + \big(c \widehat R(t) + sG\big) u(t)=0, \qquad t \in [0,T]
\end{equation}
and the associated Hamiltonian system has been  given in Equation  \eqref{eq:Dcs}; i.e.
\begin{equation}
\dot z(t) = J D_{c,s}(t)z(t), \qquad t \in [0,T]
\end{equation}
whose fundamental solution has been  denoted by $s \mapsto \Psi_{c,s}(t)$.
\begin{figure}[ht]
 \centering
 \includegraphics[scale=0.2]{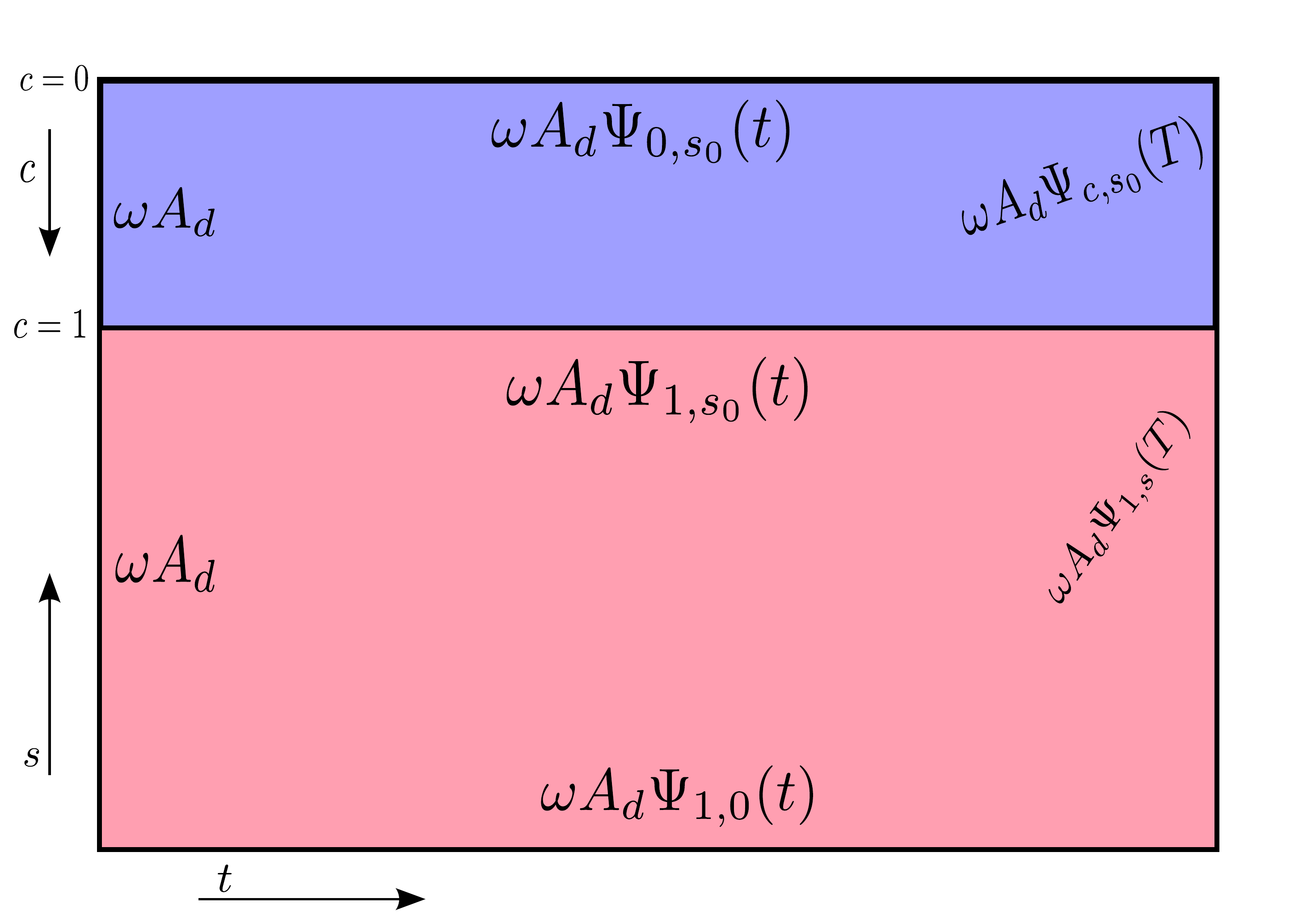}
 \caption{In the blue rectangle (above) is depicted the homotopy with respect to the parameter $c$ whilst in the pink rectangle
 (below) the homotopy with respect to the $s$ parameter.}\label{fig:omotopia}
\end{figure}
In order to prove the result we observe that
\begin{enumerate}
\item $\omega A_d \Psi_{c,s}(0)=\omega A_d$ being $\Psi_{c,s}(0)=\Id$.  This in particular implies that
\[
\iota_1\big(\omega A_d \Psi_{1,s}(0);s \in [0,s_0]\big)=0 \quad \textrm{ and } \quad \iota_1\big(\omega A_d \Psi_{c,s}(0);c \in [0,1]\big)=0.
\]
\item By Corollary \ref{thm:lemma3}, if $s_0$  is sufficiently large, then $\mathcal A_{c,s_0}$ is non-degenerate for every $c \in [0,1]$.
This in particular implies that
\[
\iota_1\big(\omega A_d \Psi_{c, s_0}(T);c \in [0,1]\big) =0.
\]
\item For $c=0$, by Proposition \ref{thm:lemma5}, we infer that
\[
\iota_1(\omega A_d \Psi_{0, s_0}(t); t \in [0,T])= \dim \ker (A-\omega \Id).
\]
\item By the homotopy invariance property of the Maslov-type index, we have
\begin{multline}\label{eq:dainvocare}
\iota_1\big(\omega A_d\Psi_{1,0}(t); t \in [0,T]\big) +\iota_1\big(\omega A_d\Psi_{1,s}(T); s \in [0,s_0]\big)=
\iota_1\big(\omega A_d\Psi_{1,s_0}(t); t \in [0,T]\big) \\
\iota_1\big(\omega A_d\Psi_{0,s_0}(t); t \in [0,T]\big) = \iota_1\big(\omega A_d\Psi_{1,s_0}(t); t \in [0,T]\big) .
\end{multline}
By Proposition \ref{thm:lemma4}, it holds that
\[
\spfl(\mathcal A^{\omega}_{1,s}; s \in [0,s_0]) =-\iota_1\big(\omega A_d\Psi_{1,s}(T); s \in [0,s_0]\big).
\]
\end{enumerate}
By taking into account the relations given in Equation \eqref{eq:dainvocare}, we  get that
\[
\iota_1\big(\omega A_d\Psi_{1,0}(t); t \in [0,T]\big) = \spfl(\mathcal A^{\omega}_{1,s}; s \in [0,s_0])
+ \dim \ker \big( A-\omega \Id\big).
\]
This concludes the proof. \qed

\section{Semi-Riemannian Bott-type iteration formula}\label{subsec:Bott-SR}

The goal of this section is to prove the Bott-type iteration formula for semi-Riemannian geodesics.

We start to observe that, since  $\widehat R$ satisfies the condition $\widehat R(T)= \trasp{A} \widehat R(0) A$, we can  extend it on the
interval $[0,mT]$. More precisely, for every $k=1, \dots, m$, we  define  the  associated Morse-Sturm system  as
\begin{equation}\label{eq:m-iteration-MS-system}
\begin{cases}
-G \ddot u+ \widehat R(t) u=0, \qquad t \in [0,mT]\\
u(0)=A^k u(kT) \textrm{ and } \dot u(0)= A^k \dot u(kT).
\end{cases}
\end{equation}
We now consider the Hamiltonian system $
\dot z(t)= JD_{c,s}(t) z(t), \qquad t \in [0,mT]
$
and we define the operator
$
\mathcal A_{c,s}^{(m)}\= -G \dfrac{d^2}{dt^2} + c \widehat R(t) + s G, \qquad t \in [0, mT]
$
on the Sobolev space
\begin{equation}
E_m^k\= \Set{u \in W^{2,2}([0,mT], \R^n)| u(0)= A^ku(k\,T), \dot u(0)= A^k \dot u(k\,T)},\quad \textrm{ for }  k=1, \dots, m.
\end{equation}
Then, by Theorem \ref{thm:Lemma6}, we have
\begin{equation}
\spfl(\mathcal A_{1,s}^{(m)}; s \in[0,s_0]) +\dim \ker (A^m-\Id)= \iota_1(A_d^m \Psi_{1,0}(t); t \in[0,m\,T]).
\end{equation}
By \cite[Theorem 1.1]{LT15}, we infer
\begin{equation}\label{eq:facili}
\iota_1(A_d^m \Psi_{1,0}(t); t \in [0,m\,T]) =\sum_{\omega^m=1} \iota_\omega(A_d \Psi_{1,0}(t); t \in [0,T])
\end{equation}
and we observe that the following equalities hold
\begin{multline}\label{eq:nullity}
\dim\ker (A^m-\Id)=\sum_{\omega^m=1} \dim\ker (A-\omega \Id),\\
\iota_\omega(A_d \Psi_{1,0}(t); t \in [0,T]) = \iota_1(\omega A_d \Psi_{1,0}(t); t \in [0,T]).
\end{multline}
By Proposition \ref{thm:Lemma6},  Equation \eqref{eq:facili} and Equation \eqref{eq:nullity}, we have
\begin{multline}
\spfl(\mathcal A_{1,s}^{(m)}; s \in [0,s_0]) + \sum_{\omega^m=1}\dim \ker (A-\omega \Id)
= \sum _{\omega^m=1} \iota_1(\omega A_d\Psi_{1,0}(t); t \in [0,T])\quad \Rightarrow \\
\spfl(\mathcal A_{1,s}^{(m)}; s \in [0,s_0])= \sum _{\omega^m=1}\Big( \iota_1(\omega A_d\Psi_{1,0}(t); t \in [0,T])-
\dim \ker (A-\omega \Id)\Big)\\= \sum _{\omega^m=1}\spfl(\mathcal A_{1,s}^{(\omega)}; s \in [0,s_0]),
\end{multline}
where the last equality follows by invoking Theorem \ref{thm:Lemma6}.

\section{A linear  instability criterion}\label{thm:maintheorem}

This Section  is devoted to the proof of the
{\em instability criteria\/} for closed non-lightlike semi-Riemannian geodesics.
\begin{note} \label{not:sp-ours}
We set
\begin{multline}
\Sp(2n,\R)^{+}\=\{M\in\Sp(2n,\R)\mid \det(M-I_{2n})>0\} \textrm{  and } \\
\Sp(2n,\R)^{-}\=\{M\in\Sp(2n,\R)\mid \det(M-I_{2n})<0\}.
\end{multline}
\end{note}
	\begin{rem}
We observe that Notation \ref{not:sp-ours} are non consistent with the standard notation used in literature. In fact, as the reader can easily realize by looking at  Subsection \ref{subsec:Maslov}, there is the missing factor  $(-1)^{n-1}$ in the definition of $\Sp(2n, \R)^+$ and of $\Sp(2n, \R)^-$. However the advantage of defining $\Sp(2n, \R)^\pm$ likes in Notation \ref{not:sp-ours} is for simplifying the discussion about the  linear stability.
	\end{rem}
\begin{lem}\label{thm:lemma2}
 Let $T\in \Sp(2n, \R) $ be a linearly stable symplectic matrix. Then, there
exists $\delta >0$ sufficiently small such that $e^{\pm \delta J}T \in \Sp(2n,\R)^+$.
\end{lem}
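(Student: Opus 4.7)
The plan is to track the sign of $\det(e^{\pm \delta J} T - \Id)$ via a first-order perturbation argument around $\delta = 0$, reducing matters to a positivity statement for a determinant attached to the $1$-eigenspace of $T$.

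First I would observe that, since $T$ is semisimple with spectrum on $\U$, its eigenvalues come in complex-conjugate pairs $\{e^{i\theta_j}, e^{-i\theta_j}\}$, with $\pm 1$ (if present) of even algebraic multiplicity. If $1 \notin \sigma(T)$ then $\det(T-\Id) = \prod_j (2 - 2\cos\theta_j) > 0$, so $T \in \Sp(2n,\R)^+$, and the conclusion in this case follows by continuity together with $\det e^{\pm\delta J}=1$. Assume from now on $1 \in \sigma(T)$, set $V_1 \= \ker(T-\Id)$ with $\dim V_1 = 2k > 0$, and let $W$ be the $T$-invariant complement provided by semisimplicity. Using $\omega(v,(T-\Id)w) = \omega(Tv, Tw) - \omega(v,w) = 0$ for $v \in V_1$, $w \in W$, together with the invertibility of $(T-\Id)|_W$, one reads off $V_1 \perp_\omega W$, so $V_1$ and $W$ are both symplectic subspaces.

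Next, in a basis adapted to $\R^{2n} = V_1 \oplus W$, writing $P_1$ for the projection onto $V_1$ along $W$, I would expand
\[
e^{\delta J} T - \Id = \begin{pmatrix} \delta\, P_1 J|_{V_1} + O(\delta^2) & O(\delta) \\ O(\delta) & (T-\Id)|_W + O(\delta) \end{pmatrix}
\]
and apply the Schur-complement formula (valid for small $\delta$, since $(T-\Id)|_W$ is invertible) to obtain
\[
\det\bigl(e^{\delta J} T - \Id\bigr) = \delta^{2k}\,\det\bigl(P_1 J|_{V_1}\bigr)\,\det\bigl((T-\Id)|_W\bigr) + O(\delta^{2k+1}).
\]
The factor $\det((T-\Id)|_W)$ is positive by the eigenvalue computation already used in the first case, applied to $T|_W$ (whose spectrum lies in $\U \setminus \{1\}$ with $-1$ of even multiplicity).

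The heart of the argument---and the main obstacle---will be proving $\det(P_1 J|_{V_1}) > 0$. For $v, v' \in V_1$, symplectic orthogonality gives $\omega(Jv - P_1 Jv, v') = 0$, so using $\omega(x,y) = \langle x, Jy\rangle$ and the orthogonality of $J$,
\[
\omega(P_1 Jv, v') = \omega(Jv, v') = \langle Jv, Jv'\rangle = \langle v, v'\rangle.
\]
Fixing any basis of $V_1$ and letting $A$, $\Omega$, $G$ be respectively the matrices of $P_1 J|_{V_1}$, of $\omega|_{V_1}$ and of the Euclidean Gram form on $V_1$, the displayed identity reads $A^{\mathsf T}\Omega = G$, whence $\det A = \det G / \det\Omega > 0$, since $G$ is positive definite and $\det\Omega = \mathrm{Pf}(\Omega)^2 > 0$ by nondegeneracy of $\omega|_{V_1}$. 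Because $\delta^{2k}$ is positive for either sign of $\delta$, this yields $\det(e^{\pm\delta J}T - \Id) > 0$ for all sufficiently small $\delta > 0$, i.e.\ $e^{\pm\delta J} T \in \Sp(2n,\R)^+$. The delicate point is exactly this final positivity: absent the compatibility $\omega = \langle\cdot,J\cdot\rangle$ between the symplectic form, the Euclidean inner product and the complex structure $J$, the sign of $\det(P_1 J|_{V_1})$ would be genuinely indeterminate.
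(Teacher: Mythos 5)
Your proof is correct, but it takes a genuinely different route from the paper's. The paper defines the path $T(\theta)=e^{-\theta J}T$, computes the crossing quantity $\trasp{T(0)}J\,T'(0)=\trasp{T}T$, notes it is positive definite (hence of even Morse index when restricted to $\ker(T-\Id)$), and then simply invokes Proposition \ref{thm:lemma3.2} (= \cite[Lemma 3.2]{HS10}) to place $e^{\mp\delta J}T$ in $\Sp(2n,\R)^+$. You instead give a self-contained, elementary argument: split $\R^{2n}=V_1\oplus W$ symplectically ($V_1=\ker(T-\Id)$, $W$ the semisimple complement), expand $\det(e^{\delta J}T-\Id)$ by Schur complement to find the leading term $\delta^{2k}\det(P_1J|_{V_1})\det((T-\Id)|_W)$, and check each factor positive --- the second by the conjugate-pair eigenvalue count on $W$, the first by the identity $\omega(P_1Jv,v')=\pm\langle v,v'\rangle$, which yields $\det(P_1J|_{V_1})=\det G/\det\Omega>0$ with $G$ a Gram matrix and $\Omega$ the Pfaffian-squared of $\omega|_{V_1}$. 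In effect you are reproving, in the special case needed, the HS10 crossing-form lemma: the even parity of the Morse index of the crossing form $\trasp{T}T|_{V_1}$ is what makes your leading coefficient positive. What the paper's route buys is brevity and modularity (it reuses an established tool). What yours buys is transparency: it isolates exactly where the compatibility among $\omega$, $J$ and the Euclidean inner product is used (the positivity of $\det(P_1J|_{V_1})$, which as you correctly observe would otherwise have indeterminate sign), and it does not require the reader to open HS10. One small remark: your convention $\omega(x,y)=\langle x,Jy\rangle$ differs by a sign from the paper's $\omega(x,y)=\langle Jx,y\rangle$; this flips $A^{\mathsf T}\Omega=G$ to $A^{\mathsf T}\Omega=-G$, but since $\dim V_1=2k$ is even, $\det A$ is unaffected and the conclusion stands under either convention.
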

\begin{proof}
Let  us consider the (smooth) symplectic path pointwise defined by
 $T(\theta)\= e^{-\theta J}T$. By a direct computation we get that
\[
 \trasp{T(\theta)} J \dfrac{d}{d\theta} T(\theta)\Big\vert_{\theta=0} =
\trasp{T}T.
\]
We observe that $\trasp{T}T$ is symmetric and positive semi-definite; moreover  being $T
$ invertible it follows that
$\trasp{T}T$  is actually positive definite. Thus, in particular,
$\iiindex(\trasp{T}T)=0$. By invoking Proposition \ref{thm:lemma3.2} it follows
that there exists $\delta >0$ such that
$T(\pm \delta) \in \Sp(2n, \R)^+ $. This concludes the proof.
\end{proof}
\begin{rem}
We observe that dropping the linear stability assumption on $T$  in Lemma \ref{thm:lemma2},  the perturbed matrix $e^{\pm \delta J}T$ belongs to $\Sp(2n,\R)^*$; however  we  can't a control in which path-connected component it will lie. (We refer the interested reader to  \cite[Equation (6) and (7), pag. 124]{Lon02} for more details).
\end{rem}

\paragraph{ Proof of Theorem \ref{thm:main-deg}.}
Here it is enough to  prove the
contrapositive, namely
\begin{itemize}
 \item if $\gamma$ is linearly stable and oriented then   $\ispec(\gamma) +n$
  is even;
 \item if $\gamma$ is linearly stable and nonoriented then   $\ispec(\gamma)
+n$   is odd.
\end{itemize}
As direct consequence of Proposition \ref{thm:lemma4}, we know that
 $\ispec(\gamma)=-\igeo(A_d\Psi_{1,s}(T); s \in [0,s_0])$.
 In order to concludes the proof, it is enough to consider the parity of $\igeo(A_d\Psi_{1,s}(T); s \in [0,s_0])+n$.
{\bf Case (OR)} If $\gamma$ is oriented, then $\det(A)=1$. Thus, if
$A_d\Psi_{1,0}(T)$ is linear stable, then by Lemma \ref{thm:lemma2},  it follows that
$e^{-\theta J}A_d\Psi_{1,0}(T)\in \Sp(2n,\R)^{+}$. By a direct computation , we infer that  $\det(A_d\Psi_{0,s_0}-I_{2n})=(-1)^n\big(2\cosh(\sqrt{s_0}T)\big)^n\det A$; thus we get 
$\det(A_d\Psi_{0,s_0}-I_{2n})>0$ (resp. $\det(A_d\Psi_{0,s_0}-I_{2n})<0$) iff $n$ is even (resp. odd). Hence, $A_d\Psi_{0,s_0}\in\Sp(2n,\R)^{+}$(resp. $A_d\Psi_{0,s_0}\in\Sp(2n,\R)^{-}$) iff $n$ is even (resp. odd).
By invoking  Corollary \ref{thm:lemma3}, if $s_0$  is sufficiently large, then $\mathcal A_{c,s_0}$ is non-degenerate for every $c \in [0,1]$. Consequently, $\det(A_d\Psi_{c,s_0}-I_{2n})\neq0$ for every $c\in[0,1]$ and in particular if $A_d\Psi_{0,s_0}\in\Sp(2n,\R)^{+}$ then $A_d\Psi_{1,s_0}\in\Sp(2n,\R)^{+}$ too.  By these arguments, we get that $A_d\Psi_{1,s_0}(T)\in
\Sp(2n,\R)^{+}$ (resp. $A_d\Psi_{1,s_0}(T)\in \Sp(2n,\R)^{-}$)  iff $n$ is even (resp. odd). Now, the conclusion follows by taking into account 
Lemma \ref{thm:parity} since   $\igeo(A_d\Psi_{1,s}(T), s \in [0,s_0])$  is even
(resp. odd) or which is
equivalent that $\ispec(\gamma)$ is even (resp. odd) iff $n$ is even (resp. odd). Therefore, we can conclude that
 $n+\ispec(\gamma)$ is always even.

{\bf Case (NOR)} If $\gamma$ is nonoriented then $\det(A)=-1$. Arguing as above, we
get that
 $n+\ispec(\gamma)$ is  odd.\qed

From now on, if not differently stated, the pair $(M,\mathfrak g)$
will denote a $(n+1)$-dimensional Riemannian  (resp. Lorentzian) manifold and $\gamma:[0,T]\rightarrow M$ a closed (resp. timelike closed) geodesic.
Once trivialized the pull-back of the tangent bundle along the closed Riemannian. (resp. timelike Lorentzian) geodesic $\gamma$, the index form
reduces to the symmetric bilinear form
$I:E\times E\rightarrow \R$ given by
\begin{equation}
I(u,v)=\int^T_0\langle\dot{u}(t),\dot{v}(t)\rangle+\langle\widehat{R}(t)u(t),v(t)\rangle dt,
\end{equation}
where $E\=\{u\in W^{1,2}([0,T],\mathbf{R}^n)\mid u(0)=Au(T)\}$, $A\in \OO(n)$ is an orthogonal matrix. We observe that, in this case, as observed in Section  \ref{subsec:Index-Theorem}, the Morse index $\iMor(\gamma)$ of $\gamma$ (meaning the
dimension of the maximal  subspace such that  $I$ is negative definite),  is well-defined.
Given  $\omega\in \U$, we let $E_{\omega}=\Set{u\in W^{1,2}([0,T],\mathbf{C}^n)|u(0)=\omega Au(T)}$ we define the {\em $\omega$-index form\/}  on $E_\omega$
as follows
\begin{equation}\label{eq:omegaindexform}
\iota_\omega(u,v)=\int^T_0\langle\dot{u}(t),\dot{v}(t)\rangle+\langle\widehat{R}(t)u(t),v(t)\rangle dt,
\end{equation}
where $\langle\cdot,\cdot\rangle$ in Equation \eqref{eq:omegaindexform} denotes the standard Hermitian product.
Following \cite{BTZ82} and denoting by
$\iMor(\omega,\gamma)$ the Morse index of $\iota_\omega$ on $E_{\omega}$, we are in position to give the following definition.
\begin{defn}\label{def:splitting-number-geo} ({\bf \cite[pag. 244]{Lon02}\/} )
The splitting numbers of the closed geodesic $\gamma$ at $\omega\in \U$ are defined by
\begin{equation}\label{eq:S-N}
\mathcal{S}^{\pm}(\omega,\gamma)=\lim_{\theta\to 0^\pm}\iMor(\omega e^{\sqrt{-1}\theta},\gamma)-\iMor(\omega,\gamma).
\end{equation}
\end{defn}
Following the discussion given above, to  the geodesic $\gamma$ we associate the
Morse-Sturm system given by
 \begin{equation}\label{eq:R-M-S}
\begin{cases}
-\ddot{u}(t)+\widehat{R}(t)u(t)=0, \qquad t \in [0,T]\\
u(0)=Au(T).
\end{cases}
\end{equation}
By using the Legendre transformation, the second order system given in Equation \eqref{eq:R-M-S} corresponds to the linear
Hamiltonian system
 \begin{equation}\label{eq:R-H-S}
\dot{z}(t)=JB(t)z(t),  \qquad t \in [0,T]\\
\end{equation}
where $B(t)\=\begin{bmatrix}I&0\\0&-\widehat{R}(t)\end{bmatrix}$.
The next result point out the relation intertwining the splitting numbers of a closed geodesic $\gamma$ and the
splitting numbers of the  symplectic matrix (linearized Poincaré map) given in Definition. \ref{defn:S-S-N}.
(We refer the interested reader to \cite[pag. 191]{Lon02}).
 \begin{lem}\label{thm:R-S-N}
 Under the above notations, we have
 \begin{equation}\label{eq:R-S-N}
 \mathcal{S}^{\pm}(\omega,\gamma)=S^\pm_{\mathcal P(T)}(\omega), \forall \ \omega\in\U.
 \end{equation}
 \end{lem}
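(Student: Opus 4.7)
The plan is to reduce the identity $\mathcal S^\pm(\omega,\gamma)=S^\pm_{\mathcal P(T)}(\omega)$ to a purely symplectic statement about the Maslov-type index, by first translating the Morse index of the $\omega$-index form into an $\omega$-Maslov-type index of the linearized Poincar\'e map, and then invoking the intrinsic characterization of the splitting numbers of a symplectic matrix (in the sense of Subsection~\ref{subsec:Maslov}, cf. also \cite[pag.~191]{Lon02}) as one-sided jumps of $\iota_\omega$ along any symplectic path from $I$ to $\mathcal P(T)$.

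The first step is an $\omega$-Morse Index Theorem in the Riemannian/timelike Lorentzian case. Since $G=I$ in the Riemannian setting (and a finite-rank negative part in the timelike Lorentzian setting that is already absorbed in the reduction of Section~\ref{sec:intro}), the quadratic form $\iota_\omega$ on $E_\omega$ is bounded from below and differs from the self-adjoint operator $\mathcal A^\omega_{1,0}$ introduced in Equation~\eqref{eq:Acs} only by a compact perturbation; the standard argument (cf.~\cite[Section~3]{HS09}, \cite[Section~3]{HS10}) then yields $\iMor(\omega,\gamma)=\ispecomega(\gamma)$. Plugging this identification into Theorem~\ref{thm:Lemma6} produces
\begin{equation}
\iMor(\omega,\gamma)+\dim\ker(A-\omega I)=\iota_\omega\bigl(\mathcal P(t);\,t\in[0,T]\bigr),
\end{equation}
valid for every $\omega\in\U$.

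The second step is to take one-sided differences in the previous display as $\omega$ is replaced by $\omega e^{\sqrt{-1}\theta}$ with $\theta\to 0^\pm$. For $|\theta|>0$ sufficiently small, $\omega e^{\sqrt{-1}\theta}$ is not an eigenvalue of the orthogonal matrix $A$, so the kernel correction $\dim\ker(A-\omega e^{\sqrt{-1}\theta}I)$ vanishes; by Definition~\ref{def:splitting-number-geo} we obtain
\begin{equation}
\mathcal S^\pm(\omega,\gamma)=\lim_{\theta\to 0^\pm}\Bigl[\iota_{\omega e^{\sqrt{-1}\theta}}\bigl(\mathcal P(t);\,t\in[0,T]\bigr)-\iota_\omega\bigl(\mathcal P(t);\,t\in[0,T]\bigr)\Bigr]+\dim\ker(A-\omega I).
\end{equation}
On the other hand, $t\mapsto \mathcal P(t)$ is a symplectic path from $I$ to $\mathcal P(T)$, and the intrinsic definition of the splitting numbers $S^\pm_{\mathcal P(T)}(\omega)$ (cf.~Definition~\ref{defn:S-S-N} and \cite[Chap.~9]{Lon02}) expresses precisely the right-hand jump of $\iota_{\omega'}$ along any such path, once the $\omega$-nullity of the endpoint is taken into account; the contribution of the latter is exactly $\dim\ker(\mathcal P(T)-\omega I)$, which by construction of $\mathcal P$ equals $\dim\ker(A-\omega I)$ modulo the nullity carried by $\Psi_{1,0}(T)$, that is absorbed in the limit $\theta\to 0^\pm$.

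The main obstacle is the careful bookkeeping of nullities at $\omega$: one must show that the endpoint correction appearing in the spectral flow formula (the term $\dim\ker(A-\omega I)$) matches exactly the correction built into the symplectic definition of $S^\pm_{\mathcal P(T)}(\omega)$. This is a purely symplectic verification based on the homotopy invariance of $\iota_\omega$ with fixed endpoints and on the standard perturbation analysis of the crossing form at $\omega$ (cf. Proposition~\ref{thm:lemma3.2} and Lemma~\ref{thm:parity}), together with the fact that for $|\theta|>0$ small the perturbed path $e^{-\sqrt{-1}\theta J}\mathcal P(t)$ is non-degenerate at the endpoint. Once this matching is carried out on both sides, the equality $\mathcal S^\pm(\omega,\gamma)=S^\pm_{\mathcal P(T)}(\omega)$ follows at once.
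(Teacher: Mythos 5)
There is a genuine gap in the final step. The path $\mathcal P(t)=A_d\Psi_{1,0}(t)$ does \emph{not} start at $I_{2n}$: since $\Psi_{1,0}(0)=\Id$, it starts at $A_d$. Consequently, Definition~\ref{defn:S-S-N} does not directly apply to $\mathcal P$, and the one-sided limit
\[
\lim_{\theta\to 0^\pm}\iota_{\omega e^{\sqrt{-1}\theta}}\bigl(\mathcal P(t);\,t\in[0,T]\bigr)-\iota_\omega\bigl(\mathcal P(t);\,t\in[0,T]\bigr)
\]
is \emph{not} $S^\pm_{\mathcal P(T)}(\omega)$. To compute $\iota_\omega$ of a path not based at the identity one must use Equation~\eqref{eq:non-citata}, concatenating with a fixed auxiliary path $\xi$ from $\Id$ to $A_d$; taking one-sided differences then yields
\[
\mathcal S^\pm(\omega,\gamma)=S^\pm_{\mathcal P(T)}(\omega)-S^\pm_{A_d}(\omega)+\dim\ker(A-\omega\Id),
\]
and the conclusion requires the nontrivial cancellation $S^\pm_{A_d}(\omega)=\nu(\omega A)=\dim\ker(A-\omega\Id)$, which holds precisely because $A$ is orthogonal. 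Your closing paragraph gestures at a "careful bookkeeping of nullities" but misidentifies the correction: you try to compare $\dim\ker(A-\omega\Id)$ with $\dim\ker(\mathcal P(T)-\omega\Id)$ (the latter being the nullity of the monodromy, a different quantity), and you never compute the startpoint contribution $S^\pm_{A_d}(\omega)$. This is exactly the step the paper isolates and resolves. Without it the argument is incomplete: the kernel correction in Theorem~\ref{thm:Lemma6} does not "absorb" the startpoint term for free; it cancels against it only after one checks $S^\pm_{A_d}(\omega)=\nu(\omega A)$ by direct computation with the basic normal forms of an orthogonal symplectic matrix.

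As a secondary remark, your route to the $\omega$-Morse Index Theorem (via $\iMor(\omega,\gamma)=\ispecomega(\gamma)$ and Theorem~\ref{thm:Lemma6}) is a legitimate alternative to the paper's direct appeal to Proposition~\ref{Thm:R-O-M-M} with Remark~\ref{Rem:C-C} and Proposition~\ref{prop:L-C}; in the Riemannian and timelike Lorentzian cases the two are equivalent once essential positivity of $\mathcal A^\omega_{1,0}$ is established. That part of the argument is fine and slightly more self-contained within this paper's notation. The missing piece is the $A_d$-startpoint correction described above.
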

\begin{rem}
In \cite[pag.226]{BTZ82},  authors proved that the splitting numbers $\mathcal{S}^{\pm}(\omega,\gamma)$ only depend on the conjugacy class of
$P$ and in \cite[pag. 247]{Lon02}, Long  generalized this result. Here we provide a different proof with respect to
the one given by authors in \cite[pag.252]{Lon02}.
\end{rem}
\begin{proof}
For any  $\bar{\omega}\in \U$, we let $S\=\bar{\omega} A$. By Proposition \ref{Thm:R-O-M-M} and Remark \ref{Rem:C-C}, we have
\begin{equation}\label{eq:R-M-M}
\iMor(\bar{\omega},\gamma)+\nu(\bar{\omega} A)=\iCLM(\Gr(\bar \omega \trasp{A}_d),\Gr(\Phi(t)); t \in [0,T])
=\iCLM(\Gr(\bar \omega),\Gr(A_d\Phi(t)); t \in [0,T]).
\end{equation}
Moreover, by taking into account  Proposition \ref{prop:L-C}, we have 
\begin{equation}\label{eq:R-M-M-4}
\begin{aligned}
&\iCLM(\Gr(\bar \omega\Id),\Gr(A_d\Phi(t)); t \in [0,T])\\&=\iCLM(\Gr(\bar \omega\Id),\Gr(A_d\Phi(t))*\xi(t)); t \in [0,T])-\iCLM(\Gr(\bar \omega\Id),\Gr(\xi(t))); t \in [0,T])\\&=\begin{cases}
(\iota_{\bar \omega}(A_d\Phi(t)*\xi(t); t \in [0,T])+n)-(\iota_{\bar \omega}(\xi(t); t \in [0,T])+n) \quad \omega=1\\
\iota_{\bar \omega}(A_d\Phi(t)*\xi(t); t \in [0,T])-\iota_{\bar \omega}(\xi(t); t \in [0,T])\quad \omega\neq 1\\
\end{cases}
\\&=\iota_{\bar \omega}(A_d\Phi(t)*\xi(t); t \in [0,T])-\iota_{\bar \omega}(\xi(t); t \in [0,T])
\quad \forall \, \omega\in\U,
\end{aligned}
\end{equation} 
where $\xi$ is any symplectic path joining  $\Id$ to $A_d$. Summing up Equations \eqref{eq:R-M-M}-\eqref{eq:R-M-M-4},
we get
 \begin{equation}\label{eq:R-M-M-2}
\iMor(\bar{\omega},\gamma)=\iota_{\bar \omega}(A_d\Phi(t)*\xi(t))-\iota_{\bar \omega}(\xi(t))-\nu(\bar{\omega} A).
\end{equation}
According to Definition \ref{defn:S-S-N}, we infer that
\begin{equation}\label{eq:R-M-M-3}
\mathcal{S}^{\pm}(\bar \omega,\gamma)=S^\pm_{\mathcal P}(\bar \omega)-S^\pm_{A_d}(\bar \omega)+\nu(\bar{\omega} A)
\end{equation}
and being $A$  an orthogonal matrix, then by direct computation, we get
$S^\pm_{A_d}(\bar \omega)=\nu(\bar{\omega} A)$.
 This complete the proof.
\end{proof}
We let $\widetilde J\=-\sqrt{-1}J$. For any symplectic matrix $M\in\Sp(2n,\R)$, we define the $\widetilde J$-invariant (generalized eigenspace)
subspace $E_\lambda$ as
\begin{equation}
E_\lambda\=\bigcup_{m\geq1}\ker(M-\lambda I)^m.
\end{equation}
and we observe that the following $\widetilde J$-orthogonal splitting holds
\begin{equation}
\C^{2n}=\bigoplus_{\lambda\in \sigma(M)}E_{\lambda}.
\end{equation}
\begin{defn}\label{def:krein-type}
For any $\lambda\in\sigma(M)\cap\U$, the restriction of  $\widetilde J$ to the subspace $E_{\lambda}$ is non-degenerate.
We define the {\em Krein-type\/} of $\lambda$ by $(p,q)$,
where $p$ (resp. $q$) denotes respectively the total multiplicity of the positive (resp. negative) negative eigenvalues of $\widetilde J\mid_{E_{\lambda}}$.
 If $p=0$ (resp. $q=0$) the eigenvalue  $\lambda$ is termed {\em Krein-negative\/} (resp. {\em Krein-positive\/});
 otherwise, $\lambda$ is called {\em Krein-indefinite\/} or of {\em mixed-type.\/}
\end{defn}
For short, we will  refer to   the case $p=0$ or $q=0$ simply as {\em Krein-definite.\/}
Before recalling  the definition of {\em strongly stability\/}, we  introduce the following new definition.
\begin{defn}\label{def:trival geodesic}
A closed geodesic $\gamma$ is called {\em index hyperbolic\/} if, for any $\lambda\=e^{\sqrt{-1}2\pi \theta}\in\sigma(\mathcal P(T))\cap \U$ (eigenvalue of
the  Poincaré map $\mathcal P(T)=A_d\Phi(T)$, here $\Phi$ denotes the fundamental solution) it holds
\begin{multline}
S^+_{\mathcal P}(e^{\sqrt{-1}2\pi\theta}) =S^-_{\mathcal P}(e^{\sqrt{-1}2\pi\theta}) =0 \quad \textrm{ if }   \theta\in \Q,  \\
S^+_{\mathcal P}(e^{\sqrt{-1}2\pi\theta}) =S^-_{\mathcal P}(e^{\sqrt{-1}2\pi\theta}) \quad \textrm{ if  }  \theta\notin \Q.
\end{multline}
\end{defn}

\begin{prop}\label{thm:trivial}
Let $(M,\mathfrak g)$ be a Riemannian (resp. Lorentzian) closed  (resp. timelike and  closed) geodesic. We assume that
for any $m\in\N$, $\iMor\big(\gamma^{(m)}\big)=0$. Then  $\gamma$ is index hyperbolic.
\end{prop}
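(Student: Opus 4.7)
The plan is to combine the Bott-type iteration formula (Theorem \ref{thm:Bott-SR}) with the identification $\ispecomega(\gamma)=\iMor(\omega,\gamma)$, valid in both the Riemannian and the timelike Lorentzian settings (cf.\ the argument behind Corollary \ref{thm:Riemann} applied $\omega$-fibrewise), in order to force every Morse index $\iMor(\omega,\gamma)$ to vanish on the unit circle, and thereby every splitting number of the Poincar\'e map $\mathcal P(T)$ to be zero.

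First I would apply Bott. For every $m\in\N$, Theorem \ref{thm:Bott-SR} together with the above identification gives
\[
0 \;=\; \iMor\big(\gamma^{(m)}\big) \;=\; \ispec\big(\gamma^{(m)}\big) \;=\; \sum_{\omega^{m}=1}\ispecomega(\gamma) \;=\; \sum_{\omega^{m}=1}\iMor(\omega,\gamma).
\]
Since each summand is a non-negative integer, I conclude that $\iMor(\omega,\gamma)=0$ for every root of unity $\omega\in\U$.

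Next I would spread the vanishing over the whole of $\U$ up to the finite set $\sigma(\mathcal P(T))\cap\U$. The map $\omega\mapsto\iMor(\omega,\gamma)$ is locally constant on the open set $\U\setminus\sigma(\mathcal P(T))$, because on this set the selfadjoint Fredholm operator underlying the $\omega$-index form has a trivial kernel, so by standard stability of the discrete spectrum under small perturbations no eigenvalue can cross zero. Since the roots of unity are dense in $\U$ while $\sigma(\mathcal P(T))\cap\U$ is finite, the previous step propagates to give $\iMor(\omega,\gamma)=0$ for every $\omega\in\U\setminus\sigma(\mathcal P(T))$. For the remaining finitely many points $\omega_{0}\in\sigma(\mathcal P(T))\cap\U$, Definition \ref{def:splitting-number-geo} now yields
\[
\mathcal S^{\pm}(\omega_{0},\gamma) \;=\; \lim_{\theta\to 0^{\pm}}\iMor\big(\omega_{0}e^{\sqrt{-1}\theta},\gamma\big)-\iMor(\omega_{0},\gamma) \;=\; -\iMor(\omega_{0},\gamma).
\]
By Lemma \ref{thm:R-S-N}, $\mathcal S^{\pm}(\omega_{0},\gamma)=S^{\pm}_{\mathcal P(T)}(\omega_{0})\geq 0$, since the splitting numbers of a symplectic matrix are non-negative integers. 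Combined with $\iMor(\omega_{0},\gamma)\geq 0$, this forces $\iMor(\omega_{0},\gamma)=0$ and hence $S^{+}_{\mathcal P(T)}(\omega_{0})=S^{-}_{\mathcal P(T)}(\omega_{0})=0$ for every $\omega_{0}\in\sigma(\mathcal P(T))\cap\U$. This last equality is strictly stronger than both clauses of Definition \ref{def:trival geodesic} (it covers both $\theta\in\Q$ and $\theta\notin\Q$), so $\gamma$ is index hyperbolic.

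The step I expect to be slightly delicate is the identification $\ispecomega(\gamma)=\iMor(\omega,\gamma)$: one must verify that, exactly as in the $\omega=1$ case treated in Corollary \ref{thm:Riemann}, the $\omega$-index form on $E_{\omega}$ in the Riemannian (resp.\ timelike Lorentzian) setting is bounded from below, so that its spectral flow from $s=0$ to $s=s_{0}$ coincides with the usual Morse index. Once this identification is in place, the rest reduces to the density argument above together with the non-negativity of the symplectic splitting numbers supplied by Lemma \ref{thm:R-S-N}.
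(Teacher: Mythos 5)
Your proposal is correct, and its overall strategy (use iteration to kill $\iMor(\omega,\gamma)$ at roots of unity, spread the vanishing by density and local constancy, then read off the splitting numbers) matches the paper's. The two proofs differ in two concrete places. First, where you invoke Theorem~\ref{thm:Bott-SR} together with the identification $\ispecomega(\gamma)=\iMor(\omega,\gamma)$, the paper bypasses the spectral index entirely and works directly with the Maslov/CLM picture: it applies Proposition~\ref{Thm:I-F} to split $\iCLM(\Gr((\trasp{A}_d)^m),\Gr(\Phi);[0,mT])$ over the $m$-th roots of unity, then converts each summand into $\iMor(\omega,\gamma)+\nu(\omega A)$ via Proposition~\ref{Thm:R-O-M-M}, and cancels the nullities using \eqref{eq:nullity}. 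Your route requires the $\omega$-fibrewise extension of Corollary~\ref{thm:Riemann} — which you flag and which does hold, since $G=\Id$ (resp.\ $\Id_n$ after the timelike reduction) makes each $\mathcal A^\omega_{1,s}$ essentially positive — but the paper's route avoids needing to justify that side step. Second, your closing argument is a genuine improvement in economy: from $\mathcal S^\pm(\omega_0,\gamma)=-\iMor(\omega_0,\gamma)$ and the non-negativity $S^\pm_{\mathcal P(T)}(\omega_0)\geq 0$ (Lemma~\ref{thm:R-S-N} plus Proposition~\ref{prop:splitting number and ultimate type}) you obtain $S^+_{\mathcal P(T)}(\omega)=S^-_{\mathcal P(T)}(\omega)=0$ for every $\omega\in\U$, which subsumes both clauses of Definition~\ref{def:trival geodesic} at once; the paper instead treats the rational case (both vanish) and the irrational case ($S^+=S^-$, not necessarily zero) separately, via a chain-of-jumps computation. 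Your version proves slightly more with less case analysis and is, to my mind, the cleaner write-up — just make sure, when fleshing it out, to state the essential positivity of $\mathcal A^\omega_{1,s}$ explicitly so that $\ispecomega(\gamma)=\iMor(\omega,\gamma)$ is visibly justified.
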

\begin{proof}
We start to observe that  by Proposition \ref{eq:I-F-C} we have
\begin{equation}\label{eq:above}
\iCLM\big(\Gr((\trasp{A}_d)^m),\Gr(\Phi(t));t\in[0,mT]\big)=\sum_{i=1}^{m}\iCLM\big(\Gr(\exp(\dfrac{i}{m}2\pi\sqrt{-1})\trasp{A}_d),\Gr(\Phi(t));t\in[0,T]\big).
\end{equation}
By invoking Proposition \eqref{eq:M-M},  Equation \eqref{eq:above} fits into the following
 \begin{equation}\label{eq:mancante}
\iMor(\gamma^{(m)})+\nu(A^m)=\sum_{i=1}^{m}\iMor\left(\exp(\dfrac{i}{m}2\pi\sqrt{-1}),\gamma\right)+\sum_{i=1}^{m}\nu(\exp(\dfrac{i}{m}2\pi\sqrt{-1}) A).
\end{equation}
By Equation \eqref{eq:nullity} we infer that $\nu(A^m)=\displaystyle{\sum_{i=1}^{m}}\nu(\exp(\dfrac{i}{m}2\pi\sqrt{-1}) A)$ and by substituting into Equation \eqref{eq:mancante},
we immediately get that
\begin{equation}
\iMor(\gamma^{(m)})=\sum_{i=1}^{m}\iMor\left(\exp(\dfrac{i}{m}2\pi\sqrt{-1}),\gamma\right).
\end{equation}
By assumption, for every $m \in \N$,   $\iMor(\gamma^{(m)})=0$. By definition,  $\iMor(\exp(\frac{i}{m}2\pi\sqrt{-1},\gamma)\geq0$ this immediately  implies that
\[
\iMor\left(\exp(\dfrac{i}{m}2\pi\sqrt{-1}),\gamma\right)=0 \quad \textrm{ for any } i, m\in\N
\]
which is  equivalent to assert that $\iMor(e^{\sqrt{-1}2\pi\theta},\gamma)=0$ for
any $\theta\in\Q$. So, for any $\lambda=e^{\sqrt{-1}2\pi\theta}\in\sigma(\mathcal P(T))$ and $\theta\in\Q$, by invoking Lemma \ref{thm:R-S-N}, we get
that
\[
S^\pm_{\mathcal P(T)}(\lambda)= \mathcal{S}^{\pm}(\lambda,\gamma)=0.
\]
This conclude the first claim. In order to prove the second claim,
if $\theta\notin\Q$, let $\theta_1<\theta<\theta_2$ be such that  $\theta_1,\theta_2$ are in $\Q$, $|\theta_j-\theta|$ is small enough and
$e^{\sqrt{-1}2\pi\theta_j}\notin\sigma(\mathcal P(T))$ for $j=1,2$. Then $\iMor(e^{\sqrt{-1}2\pi\theta_j},\gamma)=0$ and $\mathcal{S}^{\pm}(e^{\sqrt{-1}2\pi\theta_j},\gamma)=0$.  By Definition \ref{def:splitting-number-geo},  we know that in fact the splitting numbers $\mathcal{S}^{\pm}(\omega,\gamma)$ measure the jumps between $n_-(\omega,\gamma)$ and $n_-(\lambda,\gamma)$ 
 for $\lambda\in \U$ in a neighborhood of  $\omega$. By invoking once again Lemma \ref{thm:R-S-N} as well as Proposition \ref{prop:splitting number and ultimate type}, we infer that $\mathcal{S}^{\pm}(\omega,\gamma)=0$ if $\omega \notin \sigma(\mathcal P(T))$. In conclusion, we have
\begin{equation}
 \iMor(e^{\sqrt{-1}2\pi\theta_2},\gamma)=\iMor(e^{\sqrt{-1}2\pi\theta_1},\gamma)+\mathcal{S}^{+}(e^{\sqrt{-1}2\pi\theta_1},\gamma)+
 \mathcal{S}^{+}(\lambda,\gamma)-\mathcal{S}^{-}(\lambda,\gamma)-\mathcal{S}^{-}(e^{\sqrt{-1}2\pi\theta_2},\gamma),
\end{equation}
 so $\mathcal{S}^{+}(\lambda,\gamma)=\mathcal{S}^{-}(\lambda,\gamma)$. The conclusion, readily follows, by invoking once again
 Lemma \ref{thm:R-S-N}. This concludes the proof.
\end{proof}
We recall that the strong stability of a symplectic matrix can be characterized through its eigenvalues. For the
sake of the reader, we recall the following result proved by author in \cite{Eke90}.
\begin{lem}\label{lem:strong stability}({\bf  \cite[Thm 10, pag.11]{Eke90}\/})
$M$ is strongly stable if and only if it is linearly stable and all of its eigenvalues are Krein-definite.
\end{lem}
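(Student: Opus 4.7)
The plan is to prove both directions of this Krein-type stability criterion, exploiting the $\widetilde J$-orthogonal decomposition $\C^{2n}=\bigoplus_{\lambda\in\sigma(M)}E_\lambda$ introduced just above Definition \ref{def:krein-type} and the continuity of the Krein signature under perturbations of $M$ inside $\Sp(2n,\R)$.

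For the direction $(\Leftarrow)$, I would assume that $M$ is linearly stable and that every eigenvalue on $\U$ is Krein-definite, and I would show that any symplectic matrix $M_\epsilon$ close enough to $M$ still enjoys both properties. On each generalized eigenspace $E_{\lambda_0}$ the restricted form $\widetilde J$ is definite, so the pair $(E_{\lambda_0},\widetilde J|_{E_{\lambda_0}})$ is a genuine Hermitian inner-product space on which $M|_{E_{\lambda_0}}$ acts as a unitary. Under a small symplectic perturbation the generalized eigenspaces deform continuously, and the Krein signature, being integer-valued, stays locally constant. The key observation is that if a perturbed eigenvalue $\mu$ were to leave $\U$ from $E_{\lambda_0}$, then symplecticity would force the partner $\bar\mu^{-1}$ to bifurcate from $\lambda_0$ as well, producing within the perturbed subspace vectors of opposite Krein sign and contradicting the definiteness inherited by continuity. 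Hence $M_\epsilon$ has all its spectrum on $\U$, remains semisimple, and is thus linearly stable.

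For the direction $(\Rightarrow)$, I would prove the contrapositive. If $M$ is not linearly stable, the conclusion is immediate. Otherwise $M$ is linearly stable but admits some Krein-indefinite eigenvalue $\lambda_0\in\U$ of type $(p,q)$ with $p,q\geq 1$. I pick vectors $u,v\in E_{\lambda_0}$ with $\widetilde J(u,u)>0$ and $\widetilde J(v,v)<0$, and I build a Hamiltonian matrix $B\in\ssp(2n,\R)$ supported on the real symplectic $4$-plane spanned by $u,v$ and their complex conjugates (or a $2$-plane if $\lambda_0=\pm 1$), chosen so that $JB$ couples the positive and negative Krein subspaces. A direct calculation of the characteristic polynomial of $e^{\epsilon JB}M$ restricted to this invariant block shows that, for every small $\epsilon>0$, a pair of eigenvalues bifurcates off $\U$. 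This exhibits arbitrarily close linearly unstable symplectic matrices, contradicting the strong stability of $M$.

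The main obstacle is the explicit perturbation construction in $(\Rightarrow)$. I would handle it by first reducing $M|_{E_{\lambda_0}\oplus E_{\bar\lambda_0^{-1}}}$ to a Hamiltonian normal form adapted to the splitting into Krein-positive and Krein-negative summands; then the destabilizing $B$ becomes a rank-two symmetric coupling between the two summands, and the eigenvalue motion is governed by a $2\times 2$ ``hyperbolic rotation'' whose roots leave $\U$ precisely because $\widetilde J$ is indefinite on $E_{\lambda_0}$. The indefiniteness $pq\geq 1$ is exactly what makes this coupling available, and its failure in the Krein-definite case is the geometric content of the persistence argument used in $(\Leftarrow)$.
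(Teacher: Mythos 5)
The paper does not prove this lemma: it cites it directly as~\cite[Thm~10, pag.~11]{Eke90} (the Krein--Gel'fand--Lidskii stability criterion) and uses it as a black box, so there is no in-paper proof against which to compare. Your sketch reproduces the standard two-sided argument that one finds in Ekeland's book and in Long's \emph{Index theory for symplectic paths}: for $(\Leftarrow)$, one observes that the Krein form on the range of the Riesz spectral projection around a unit eigenvalue varies continuously with $M$, so its signature is locally constant; since $E_\mu$ is $\widetilde J$-isotropic whenever $|\mu|\neq 1$, any pair $(\mu,\bar\mu^{-1})$ escaping $\U$ would contribute a block of signature zero, contradicting Krein-definiteness; for $(\Rightarrow)$, indefiniteness on some $E_{\lambda_0}$ permits a rank-two Hamiltonian coupling between the positive and negative Krein summands that pushes a pair of eigenvalues off $\U$ for arbitrarily small $\epsilon$. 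This is exactly the shape of Ekeland's proof, so your approach is faithful to the cited source.

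Two small imprecisions worth tightening if you want a complete argument rather than a sketch. First, in $(\Leftarrow)$ you speak of the ``generalized eigenspaces deforming continuously''; strictly, individual generalized eigenspaces need not persist (a simple eigenvalue can split), and the object that \emph{does} vary continuously is the total spectral subspace $\mathrm{rge}\,\mathcal P_{\Gamma}(M_\epsilon)$ attached to a small circle $\Gamma$ around $\lambda_0$; the Krein signature of $\widetilde J$ restricted there is the locally constant integer. Second, you assert that $M_\epsilon$ ``remains semisimple'' but do not justify it: the needed fact is that if $\widetilde J|_{E_{\lambda_0}}$ is definite then $M|_{E_{\lambda_0}}$ cannot carry a nontrivial Jordan block, because a length-$\geq 2$ Jordan chain forces the cycle-bottom eigenvector $v$ to satisfy $\widetilde J(v,v)=0$. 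Since your proposal already records the indefinite-coupling mechanism in $(\Rightarrow)$ as the ``main obstacle'' and describes a correct plan for it, the sketch is sound; it simply mirrors the classical reference rather than offering an alternative route.
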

\begin{rem}\label{rem:strong stable}
We observe that since the eigenvalues $\{-1,1\}$ have  Krein-type $(p,p)$, by this it readily follows that  $\pm1$ cannot be in the spectrum  of a
strongly stable symplectic matrix.
\end{rem}
As a direct consequence of Proposition \ref{thm:trivial} and of the basic normal forms of a symplectic matrix, we get the
following strongly instability result for closed geodesics.
\paragraph{Proof of Theorem \ref{cor:non-strong stable}.}
Under  the assumptions of Theorem \ref{cor:non-strong stable}, by invoking Proposition \ref{thm:trivial},
we can conclude that the geodesic is  index hyperbolic. By taking into account Definition \ref{def:trival geodesic} on
index hyperbolicity applied to  the monodromy $\mathcal P(T)=A_d\Phi(T)$, we get that,
for every eigenvalue $e^{\sqrt{-1}\theta}\in \sigma(\mathcal P(T))$, $S^+_{\mathcal P(T)}(e^{\sqrt{-1}\theta}) -S^-_{\mathcal P(T)}(e^{\sqrt{-1}\theta}) =0 $.
In order to concludes the proof, we argue by contradiction. For, we assume that $\mathcal P(T)$ is strongly stable.
Thus, by the characterization given in Lemma \ref{lem:strong stability}, $\mathcal P(T)$ is linearly stable and  all of its
 eigenvalues  are Krein-definite. By this fact, it immediately  follows that, for any
eigenvalue $e^{\sqrt{-1}\theta}\in \sigma(\mathcal P(T))$ having Krein type $(p,q)$, it holds that $p-q\neq0$. By this we get a contradiction once invoked
\cite[Corollary 8, pag.198]{Lon02}, being
\begin{multline}
S^+_{\mathcal P}(e^{\sqrt{-1}\theta}) -S^-_{\mathcal P}(e^{\sqrt{-1}\theta})=p-q\neq 0 \textrm{ and } \\
S^+_{\mathcal P}(e^{\sqrt{-1}\theta}) -S^-_{\mathcal P}(e^{\sqrt{-1}\theta})=p-q= 0
\end{multline}
at the same time.  This concludes the proof.
\qed


\appendix

\section{On the Maslov  index, spectral flow and Index
theorems}\label{sec:appendix}

The aim of this Section is to  make a brief recap on the Maslov-type index,
the spectral flow for path of closed selfadjoint Fredholm operators and the
Index Theorems.

\subsection{On the Maslov-type index}\label{subsec:Maslov}

Our basic reference it is \cite{HS09, LZ00a, LZ00b, Lon02} and references
therein.

Let $ \omega\in \U$ and for any $ M\in \Sp(2n, \R)$, we
  define the real-valued function
  \[
D_{\omega}(M)=(-1)^{n-1}\overline\omega^{n}\det(M-\omega \Id_{2n}).
  \]
Then $ \Sp(2n, \R)^{0}_{\omega}\=\{M\in \Sp(2n, \R)| D_{\omega} M=0\} $ is a
codimensional-one
variety in $ \Sp(2n, \R) $ and let us define
\[
\Sp(2n)^{*}_{\omega}\=\Sp(2n)\backslash \Sp(2n, \R)^{0}_{\omega}=\Sp(2n,
\R)_{\omega}^{+}\cup
\Sp(2n, \R)_{\omega}^{-}
\]
where
\[
\Sp(2n,\R)_{\omega}^{+}\=\{M\in \Sp(2n,\R)| D_{\omega} M<0\} \textrm{ and }
\Sp(2n,\R)_{\omega}^{-}
\=\{M\in \Sp(2n,\R)
| D_{\omega} M>0\}.
\]
 For any $M\in \Sp(2n,\R)^{0}_{\omega}$, $\Sp(2n,\R)_{\omega}^{0}$ is
co-oriented at the point $M$
 by choosing  as positive direction the direction determined by
 $\frac{d}{dt}Me^{tJ}|_{t=0}$ with $t\geq0$ sufficiently small. The following
result is well-known.
 \begin{lem}\cite[pag.58-59]{Lon02}.\label{thm:topology}
 For any $\omega\in\U$, $\Sp(2n,\R)_{\omega}^{+}$ and
$\Sp(2n,\R)_{\omega}^{-}$ are two path connected
 components of $\Sp(2n,\R)_{\omega}^{*}$
  which are simple connected in $\Sp(2n,\R)$.
 \end{lem}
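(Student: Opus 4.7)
The plan is to combine the continuity of $D_\omega$ with a polar-decomposition reduction and the classical computation of the fundamental group of $\Sp(2n,\R)$. Since $D_\omega\colon\Sp(2n,\R)\to\R$ is a real-valued polynomial in the matrix entries, the sets $\Sp(2n,\R)_\omega^{\pm}=\{D_\omega\gtrless 0\}$ are automatically open, disjoint, and cover $\Sp(2n,\R)_\omega^*$. Both are non-empty: for each $\omega\in\U$ an explicit symplectic matrix with either sign of $D_\omega$ can be produced as a block-diagonal sum of standard hyperbolic/elliptic $2\times 2$-blocks with tunable parameters. It remains to prove that each piece is path-connected and that every loop contained in it is null-homotopic in the ambient $\Sp(2n,\R)$.

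For path-connectedness, I would use the polar decomposition $M=UP$ with $U\in\Sp(2n,\R)\cap O(2n)\cong U(n)$ and $P$ positive definite symplectic, together with the smooth homotopy $s\mapsto UP^{1-s}$, $s\in[0,1]$, which contracts the $P$-factor through positive definite symplectic matrices. A case analysis based on the Krein types of the eigenvalues of $M$ shows that the sign of $D_\omega$ is preserved along this homotopy, so that path-connectedness inside $\Sp(2n,\R)_\omega^{\pm}$ reduces to the corresponding statement for $U(n)\cap\Sp(2n,\R)_\omega^{\pm}$. There, diagonalizing the unitary factor and rotating its eigenphases along the open arcs of $\U\setminus\{\omega\}$ joins any two unitary matrices sharing the same sign of $D_\omega$, the sign recording the parity of the number of eigenphases that would otherwise have to traverse $\omega$.

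For the simple-connectedness of the inclusion-induced map $\pi_1(\Sp(2n,\R)_\omega^{\pm})\to\pi_1(\Sp(2n,\R))\cong\Z$, I would again reduce along the polar deformation to loops in $U(n)$ and then test against an explicit generator of $\pi_1(U(n))$, for instance the loop $\sigma(t)=\mathrm{diag}(R(t),I_{2n-2})$ with $R(t)$ the planar rotation of angle $t\in[0,2\pi]$. A direct computation of $D_\omega\circ\sigma$ shows that this generator necessarily visits both half-spaces $\Sp(2n,\R)_\omega^{\pm}$; combining this with the co-orientation of $\Sp(2n,\R)_\omega^{0}$ furnished by $M\mapsto\frac{d}{dt}Me^{tJ}\vert_{t=0^{+}}$ yields a winding-number invariant for loops in $\Sp(2n,\R)$ that vanishes identically on loops confined to $\Sp(2n,\R)_\omega^{\pm}$, giving the claim. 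The hardest step will be the polar-contraction argument itself: along $s\mapsto UP^{1-s}$ one must rule out eigenvalues of $UP^{1-s}$ crossing $\omega$ in the interior of the homotopy, which can fail when $M$ carries eigenvalues on $\U$ of mixed Krein type close to $\omega$. My plan is to split $\C^{2n}$ into the symplectic-orthogonal direct sum of the spectral subspaces corresponding to eigenvalues away from $\omega$ and those near $\omega$, handle each block separately, and, when necessary, use the co-orientation direction above to steer risky eigenvalues past $\omega$ without crossing $\Sp(2n,\R)_\omega^{0}$.
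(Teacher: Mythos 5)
The paper gives no internal proof of this lemma; it is quoted directly from Long's monograph \cite[pp.~58--59]{Lon02}, so your proposal has to be assessed on its own. Its central step — reducing both path-connectedness and the triviality of $\pi_1(\Sp(2n,\R)^{\pm}_\omega)\to\pi_1(\Sp(2n,\R))$ to the unitary subgroup via the polar contraction $s\mapsto UP^{1-s}$ — has a structural defect that cannot be patched: for $\omega=\pm1$ one of the two open sets $\Sp(2n,\R)^{\pm}_\omega$ is \emph{disjoint} from the retraction target $U(n)\cap\Sp(2n,\R)$. Indeed, if $U\in U(n)\cap\Sp(2n,\R)$ its eigenvalues occur in conjugate pairs $e^{\pm i\theta_j}$, whence
\[
D_1(U)=(-1)^{n-1}\det(U-I_{2n})=(-1)^{n-1}\prod_{j=1}^{n}\bigl|e^{i\theta_j}-1\bigr|^{2},
\]
a quantity of constant sign $(-1)^{n-1}$ when non-zero. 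Hence $U(n)\cap\Sp(2n,\R)^{+}_{1}=\emptyset$ for $n$ odd and $U(n)\cap\Sp(2n,\R)^{-}_{1}=\emptyset$ for $n$ even; the same computation with $-I$ in place of $I$ gives $U(n)\cap\Sp(2n,\R)^{-}_{-1}=\emptyset$ for every $n$. A concrete instance at $n=1$, $\omega=1$: $M=\mathrm{diag}(2,1/2)$ has $\det(M-I)=-1/2<0$, so $M\in\Sp(2,\R)^{+}_{1}$, yet its polar factors are $U=I$, $P=M$, the contraction $s\mapsto P^{1-s}$ ends on the singular cycle at $I$, and every rotation $R(\phi)$ has $\det(R(\phi)-I)=2-2\cos\phi\geq 0$, so $\Sp(2,\R)^{+}_{1}$ does not meet $U(1)$ at all.

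This is not the interior-crossing difficulty you flagged, and no Krein-type bookkeeping or ``steering past $\omega$'' can rescue it: you are attempting to retract a non-empty open set (for instance $\Sp(2n,\R)^{+}_{1}\ni M^{+}_{n}=D(2)^{\diamond n}$ when $n$ is odd) onto an empty one. Both halves of your argument therefore collapse precisely at $\omega=1$, the value the paper actually needs for $\iota_1$. The way around this, as in Long's own treatment, is not to retract globally onto $U(n)$ but to connect each $M\in\Sp(2n,\R)^{\pm}_\omega$ by an explicit path to the fixed model matrix $M^{\pm}_n$ inside the same open set, using the eigenvalue/normal-form structure of $M$ to keep the spectrum away from $\omega$ throughout; the topology of $U(n)$ and winding-number considerations enter only afterwards, once each piece is known to be path-connected and contains a standard representative. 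Your remark that the co-orientation of $\Sp(2n,\R)^0_\omega$ yields an intersection-number invariant is sound and is indeed used, but the polar decomposition must be deployed locally and selectively, not as a blanket deformation retraction of the whole symplectic group.
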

For any two $2m_{k}\times 2m_{k}$ matrices with the block form
$M_{k}=\begin{bmatrix} A_{k}&B_{k}\\C_{k}&D_{k}\end{bmatrix}$ with $k=1,2$, we
define the
 $\diamond$-product of $M_{1}$ and $M_{2}$ in the following way:
\[
M_{1}\diamond M_{2}\=\begin{bmatrix}
A_{1}&0&B_{1}&0\\0&A_{2}&0&B_{2}\\C_{1}&0&D_{1}&0\\0&C_{2}&0&D_{2}
\end{bmatrix}.
\]
The $k$-fold $\diamond$-product of $M$ is denoted by $M^{\diamond
k}=M\diamond\cdots \diamond M$.
 Note that the $\diamond$-product of two symplectic matrices is symplectic, so
for any two symplectic paths
 $T_{k}\in  \mathscr C^0\big([0,\tau],\Sp(2n_k, \R)\big),k=1,2$,
 denote $T_{1}\diamond T_{2}(t)=T_{1}(t)
 \diamond T_{2}(t),\forall t\in[0,\tau]$,
 then $ T_{1}\diamond T_{2}\in  \mathscr
C^0\big([0,\tau],\Sp(2(n_{k_{1}}+n_{k_{2}}), \R)\big)$.
Given any two symplectic paths $\gamma,\eta:[0,\tau]\rightarrow \Sp(2n,\R)$
such that $\gamma(0)=\eta(\tau)$, we define their concatenation in the
 following way:
\begin{equation}
  (\gamma*\eta)(t)\=
  \begin{cases}
   \eta(2t) & \textrm{ if }   0\leq t\leq \frac{\tau}{2}  \\
   \gamma(2t-\tau) & \textrm{ if  }  \frac{\tau}{2}\leq t\leq \tau.\\
   \end{cases}
  \end{equation}
For $a\in \R^*$, let
$D(a)=\begin{bmatrix}a&0\\0&\frac{1}{a}\end{bmatrix}$ and let
$M_{n}^{+}\=D(2)^{\diamond n}$ and $M_{n}^{-}\=D(-2)\diamond D(2)^{\diamond
(n-1)}$. By a straightforward computation  we have
$M_{n}^{+}\in \Sp(2n,\R)_{\omega}^{+}$
and $M_{n}^{-}\in\Sp(2n,\R)_\omega^{-}$. We let
\[
\eta_{n}(t)\=\begin{bmatrix}
2-\frac{t}{\tau}&0\\0&(2-\frac{t}{\tau})^{-1}\end{bmatrix}^{\diamond
n}\qquad t\in[0,\tau]
\]
and we observe that $\eta_{n}$ is a path in $\Sp(2n,\R)$
joining $M_{n}^{+}$ to $\Id_{2n}$. Following author in \cite{Lon02} we recall
the following definition.
 \begin{defn}\label{def:Maslov-index-ok}
For any $\omega\in \U$ and $ T \in  \mathscr C^0\big([0,\tau],\Sp(2n,
\R)\big)$ such that $T(0)=I_{2n}$, we define
\begin{equation}
\iota_\omega(T)\=\big[e^{-\varepsilon J}\big(T*\eta_{n}\big):\Sp(2n,\R)_{\omega}^{0}\big],
\end{equation}
where the (RHS) denotes the intersection number between the perturbed path
$t\mapsto e^{-\varepsilon J} (T*\eta_{n})(t)$
with the singular cycle $\Sp(2n,\R)^0_\omega$. We set
 \begin{equation}
\nu_{\omega}( T)=\dim_{\C}( T(\tau)-\omega \Id_{2n}).
\end{equation}
\end{defn}
\begin{rem}
 It is worth noticing that the Definition \ref{def:Maslov-index-ok} is
independent on the choice of a sufficiently
 small $\varepsilon>0$.
\end{rem}
By Definition \ref{def:Maslov-index-ok}, for any continuous symplectic path
$S:[a,b]\rightarrow \Sp(2n,\R)$ such that $S(a)\neq \Id_{2n}$, we can
choose a path $T \in  \mathscr C^0\big([a,b],\Sp(2n, \R)\big)$
such that $T(a)=\Id_{2n}$ and  $T(b)=S(a)$ and we define the{\em  Maslov-type index of $S$\/} as
\begin{equation}\label{eq:non-citata}
 \iota_\omega(S)\=\iota_{\omega}(S*T)-\iota_{\omega}(T).
\end{equation}
(For further details we refer the interested reader to
\cite[Definition 9, pag. 148 ]{Lon02}).
By \cite[Lemma 6,pag. 120 ]{Lon02}, for any
path $T \in  \mathscr C^0\big([0,\tau],\Sp(2n, \R)\big)$
such that $T(0)=\Id_{2n}$ the Maslov-type index
$\iota_\omega(T)$ is even (resp. odd) if and only if $e^{-\varepsilon
J}T(\tau)$ lies in $\Sp(2n,\R)_{\omega}^{+}$
(resp. $\Sp(2n,\R)_{\omega}^{-}$).
\begin{rem}
It is worth noticing that the  number $k$ appearing in \cite[Lemma
6,pag. 120]{Lon02} agrees with  $\iota_\omega(T)$.
We also observe that
$\beta(\tau)=M_{n}^{\pm}$ in particular implies  that $e^{-\varepsilon
J}T(\tau)\in\Sp(2n,\R)_{\omega}^{\pm}$.
\end{rem}
 \begin{lem}\label{thm:parity}
 Let $S:[a,b] \to \Sp(2n, \R)$ be a continuous path.   Then we have
\begin{itemize}
 \item[] $\iota_\omega(S)$ is even $\iff$ both the endpoints
 $e^{-\varepsilon J}S(a)$ and $e^{-\varepsilon J}S(b)$ lie in $\Sp(2n, \R)_\omega^+$ or
in $\Sp(2n, \R)_\omega^-$.
 \end{itemize}
 \end{lem}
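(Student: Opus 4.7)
}
The plan is to bootstrap from the parity criterion already recalled in the excerpt for symplectic paths \emph{starting at the identity}, namely that for $T\in\mathscr C^0([0,\tau],\Sp(2n,\R))$ with $T(0)=\Id_{2n}$ one has: $\iota_\omega(T)$ is even (resp.\ odd) iff $e^{-\varepsilon J}T(\tau)\in\Sp(2n,\R)_\omega^+$ (resp.\ $\Sp(2n,\R)_\omega^-$). This is exactly the content of \cite[Lemma~6, pag.~120]{Lon02}, and geometrically it reflects the fact, expressed in Lemma \ref{thm:topology}, that $\Sp(2n,\R)^*_\omega$ decomposes into the two simply connected components $\Sp(2n,\R)_\omega^\pm$ separated by the co-oriented codimension one cycle $\Sp(2n,\R)^0_\omega$; in particular the mod~$2$ intersection number of a path with this cycle detects only whether the endpoints lie in the same component.

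Concretely, since $\Sp(2n,\R)$ is path connected, I would pick an auxiliary continuous symplectic path $T:[0,1]\to\Sp(2n,\R)$ with $T(0)=\Id_{2n}$ and $T(1)=S(a)$. Applying the definition of the Maslov-type index for paths not starting at the identity (Equation~\eqref{eq:non-citata}) gives
\begin{equation}
\iota_\omega(S)=\iota_\omega(T*S)-\iota_\omega(T).
\end{equation}
Both $T$ and $T*S$ start at $\Id_{2n}$, so the parity criterion quoted above applies: $\iota_\omega(T)$ is even iff $e^{-\varepsilon J}T(1)=e^{-\varepsilon J}S(a)\in\Sp(2n,\R)_\omega^+$, and $\iota_\omega(T*S)$ is even iff $e^{-\varepsilon J}(T*S)(\text{end})=e^{-\varepsilon J}S(b)\in\Sp(2n,\R)_\omega^+$. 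Consequently $\iota_\omega(S)$, being the difference of two integers, is even iff $\iota_\omega(T)$ and $\iota_\omega(T*S)$ share the same parity, which is precisely the condition that $e^{-\varepsilon J}S(a)$ and $e^{-\varepsilon J}S(b)$ lie in the same path component of $\Sp(2n,\R)^*_\omega$, i.e.\ both in $\Sp(2n,\R)_\omega^+$ or both in $\Sp(2n,\R)_\omega^-$.

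The only mildly delicate point, and the one I would flag as the potential obstacle, is the choice of the perturbation parameter $\varepsilon>0$: one must choose $\varepsilon$ small enough so that $e^{-\varepsilon J}S(a)$ and $e^{-\varepsilon J}S(b)$ both actually belong to the regular locus $\Sp(2n,\R)^*_\omega$ (so that the condition ``lies in $\Sp(2n,\R)_\omega^\pm$'' is meaningful), and simultaneously small enough for the auxiliary path $T$ chosen above. This is however a routine transversality observation: since $\Sp(2n,\R)^0_\omega$ is a codimension one subvariety co-oriented by the flow $M\mapsto Me^{tJ}$ used to define $\iota_\omega$, any sufficiently small $\varepsilon>0$ lifts both endpoints off the cycle, and the resulting parity statement is independent of the choice of $\varepsilon$ and of the connecting path $T$ thanks to the very definition of the Maslov-type index recalled in Definition \ref{def:Maslov-index-ok}. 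Once this standard genericity is in place, the three-line calculation above delivers both implications of the ``iff''.
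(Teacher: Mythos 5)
Your proposal is correct and follows essentially the same route as the paper's proof: pick an auxiliary path $T$ joining $\Id_{2n}$ to $S(a)$, apply Long's parity criterion \cite[Lemma~6, pag.~120]{Lon02} to $T$ and to the concatenation with $S$ (which the paper writes as $S*T$ under its ``$(\gamma*\eta)$ does $\eta$ first'' convention, so your $T*S$ is the same path in the opposite notation), and read off the parity of $\iota_\omega(S)$ via Equation~\eqref{eq:non-citata}. The only addition is that you explicitly flag the genericity of the perturbation parameter $\varepsilon$, a point the paper leaves implicit but which does indeed hold for the reason you give.
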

\begin{proof}
As a direct application of  \cite[Lemma 6,pag. 120 ]{Lon02} to the paths $S*T$
and $T$, we immediately
get that, if $e^{-\varepsilon J}S(a)$ and $e^{-\varepsilon J}S(b)$ are in the
same
components of $\Sp(2n,\R)_{\omega}^{*}$, then  the parity of the two Maslov-type 
indices $\iota_\omega(S*T)$ and $\iota_\omega(T)$ coincides. Furthermore,
by the definition stated in Formula \eqref{eq:non-citata},
we know that it's  even and hence also the converse is true. This conclude the
proof.
\end{proof}

To use the iteration formula, here we give the definition and some properties we need of the splitting numbers of any symplectic matrix which can be found in \cite[pag.190-199]{Lon02}.
\begin{defn}\label{defn:S-S-N}
For any $M\in\Sp(2n)$ and $\omega\in\U$, we define the splitting numbers $S_{M}^{\pm}(\omega)$ of $M$ by
$
S_{M}^{\pm}(\omega)=\lim\limits_{\theta\rightarrow0^{\pm}}\iota_{\omega e^{\sqrt{-1}\theta}}(\gamma)-\iota_\omega(\gamma)
$
 where $\gamma$ is a symplectic path connecting $I_{2n}$ and $M$.
\end{defn}

The splitting numbers have the following property:
\begin{prop}\label{prop:propery-splitting}
The splitting numbers $S_{M}^{\pm}(\omega)$ are independent of the path $\gamma$ and for any $\omega\in\U$ and $N\in\Omega^0(M)$, the path connected component of $\Omega(M)=\{N\in\Sp(2n)\mid\sigma(N)\cap\U=\sigma(M)\cap\U,
\text{and} \ \nu_\lambda(N)=\nu_\lambda(M)\ \forall \lambda\in\sigma(M)\cap\U\}$ which contains $M$, $S_{N}^{\pm}(\omega)$ are
constant. If $\omega\notin\sigma(M)$, then $S_{M}^{\pm}(\omega)=0$. Moreover,
\begin{equation}
S_{M_1\diamond M_2}^{\pm}(\omega)=S_{M_1}^{\pm}(\omega)+S_{M_2}^{\pm}(\omega), \forall \ \omega\in \U.
\end{equation}
\end{prop}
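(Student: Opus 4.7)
The plan is to derive each of the four assertions from two structural properties of the Maslov-type index $\iota_\omega$: additivity under concatenation of symplectic paths, which is baked into the definition given in Formula \eqref{eq:non-citata}; and additivity under the $\diamond$-product, namely $\iota_\omega(\gamma_1 \diamond \gamma_2) = \iota_\omega(\gamma_1) + \iota_\omega(\gamma_2)$, a direct consequence of Definition \ref{def:Maslov-index-ok} and the block structure of the cycle $\Sp(2n, \R)^0_\omega$. Both are standard in \cite{Lon02}.

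For independence of the path $\gamma$, I would pick two symplectic paths $\gamma_1, \gamma_2$ joining $\Id$ to $M$ and form the loop $\alpha = \gamma_1 * \gamma_2^{-1}$ based at $\Id$. By the concatenation property,
\[
\bigl[\iota_{\omega e^{\sqrt{-1}\theta}}(\gamma_1)-\iota_\omega(\gamma_1)\bigr] - \bigl[\iota_{\omega e^{\sqrt{-1}\theta}}(\gamma_2)-\iota_\omega(\gamma_2)\bigr] = \iota_{\omega e^{\sqrt{-1}\theta}}(\alpha) - \iota_\omega(\alpha).
\]
The key classical input (cf.\ \cite[Chapter 5]{Lon02}) is that for any loop $\alpha$ based at $\Id$ in $\Sp(2n, \R)$, the integer $\iota_{\omega'}(\alpha)$ is in fact \emph{independent} of $\omega' \in \U$: it records, up to a fixed factor, the homotopy class of $\alpha$ in $\pi_1(\Sp(2n, \R)) \cong \Z$. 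Hence the right-hand side vanishes identically and both candidate values of $S_M^{\pm}(\omega)$ coincide.

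For the vanishing $S_M^{\pm}(\omega) = 0$ when $\omega \notin \sigma(M)$, I would use that the endpoint $M$ lies in the open set $\Sp(2n, \R) \setminus \Sp(2n, \R)^0_{\omega'}$ for all $\omega'$ in a neighborhood of $\omega$; consequently the intersection count $\iota_{\omega'}(\gamma)$ is locally constant in $\omega'$ at $\omega$ for a fixed path $\gamma$, so both one-sided limits in the defining formula reduce to $\iota_\omega(\gamma)$. For constancy on the path component $\Omega^0(M)$, I would apply the same idea parametrically: a continuous path $s \mapsto N_s$ inside $\Omega^0(M)$ preserves $\sigma \cap \U$ together with all multiplicities $\nu_\lambda$, so the Jordan picture at $\omega$ and the local intersection pattern of the endpoint with the perturbed cycles $\Sp(2n, \R)^0_{\omega e^{\sqrt{-1}\theta}}$ are transported continuously in $s$ without net creation or cancellation of crossings near the endpoint. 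Combined with path-independence, this forces $s \mapsto S_{N_s}^{\pm}(\omega)$ to be locally constant, hence constant on $\Omega^0(M)$. Finally, the $\diamond$-product formula follows by applying $\iota_\omega(\gamma_1 \diamond \gamma_2) = \iota_\omega(\gamma_1) + \iota_\omega(\gamma_2)$ to a product of paths $\gamma_1 \diamond \gamma_2$ from $\Id$ to $M_1 \diamond M_2$ and taking the one-sided limit.

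The genuinely delicate step I expect to be the first one, namely establishing the $\omega'$-independence of $\iota_{\omega'}(\alpha)$ on loops. This requires a careful comparison of how the coorientation on the codimension-one cycle $\Sp(2n, \R)^0_{\omega'}$ rotates with $\omega'$, and is handled in \cite{Lon02} by an explicit homotopy-theoretic identification of $\iota_\omega$ on loops with (twice) the generator of $\pi_1(\Sp(2n, \R))$. Everything else is a direct, essentially formal, consequence once that identification is in hand.
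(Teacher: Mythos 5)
The paper does not actually prove this proposition; it is stated as background and the reader is sent to \cite[pp.~190--199]{Lon02}. So the question is only whether your sketch is sound, and whether it matches Long's strategy. In outline it does: reduce path-independence to a statement about loops based at $\Id$, get the vanishing for $\omega\notin\sigma(M)$ from local constancy of $\iota_{\omega'}$ near $\omega$, get constancy on $\Omega^0(M)$ from homotopy invariance with fixed endpoint nullity, and get $\diamond$-additivity from the symplectic (direct-sum) additivity of the index. These are exactly the ingredients Long uses.

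There is, however, one statement you should not leave as written. The assertion that ``for any loop $\alpha$ based at $\Id$, the integer $\iota_{\omega'}(\alpha)$ is independent of $\omega'\in\U$'' is false at $\omega'=1$ with the normalization used in this paper. Already for the constant loop $\alpha\equiv\Id$ one has $\iota_\omega(\alpha)=0$ for every $\omega\in\U\setminus\{1\}$, while $\iota_1(\alpha)=-n$ (this is forced by Proposition~\ref{prop:L-C}, since $\iCLM(\Delta,\mathrm{const}_\Delta)=0$). The discrepancy is compensated by the fact that the concatenation identity you invoke also picks up a $+n$ at $\omega=1$: from $\iota_1(\cdot)+n=\iCLM(\Delta,\Gr(\cdot))$ and Property~III of $\iCLM$ one gets $\iota_1(\gamma_1)-\iota_1(\gamma_2)=\iota_1(\alpha)+n$, not $\iota_1(\alpha)$. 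The two $n$'s cancel and the conclusion survives, but the clean way to state the loop lemma is at the level of the graph $\iCLM$ index: for a loop $\alpha$ at $\Id$, $\iCLM(\Gr(\omega\Id),\Gr(\alpha))$ is independent of $\omega\in\U$ (with $\Gr(1\cdot\Id)=\Delta$). If you phrase both the loop lemma and the concatenation identity through $\iCLM$, the $\pm n$'s never appear and your argument goes through verbatim for all $\omega$, including $\omega=1$. A secondary, smaller point: for constancy on $\Omega^0(M)$ the phrase ``without net creation or cancellation of crossings'' is doing work that should really be carried by the homotopy-invariance property of $\iota_\omega$ relative to paths with constant $\nu_\omega$ at the varying endpoint; since $\sigma(N_s)\cap\U$ and all $\nu_\lambda(N_s)$ are constant along a path in $\Omega^0(M)$, that invariance applies directly and gives the result without appealing to a picture of moving crossings.
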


For any symplectic matrix $M\in\Sp(2n,\R)$ with eigenvalue $\omega\in\mathbf{U}$, in order to give a complete explanation of $S^{\pm}_M(\omega)$,
we need the concept of ultimate type of $\omega$ which is introduced in \cite[pag.41-42]{Lon02}.

Firstly, we give all the basic normal forms for eigenvalues of $M$ in $\mathbf{U}$ as follows:
\begin{equation}
N_1(\lambda,b)=\begin{bmatrix}\lambda&b\\0&\lambda\end{bmatrix}, \ \lambda=\pm1,b=\pm1,0
\end{equation}
\begin{equation}
R(\theta)=\begin{bmatrix}\cos\theta&-\sin\theta\\ \sin\theta&\cos\theta \end{bmatrix}, \ \theta\in(0,\pi)\cup(\pi,2\pi)
\end{equation}
\begin{equation}
N_2(\omega,b)=\begin{bmatrix}R(\theta)&b\\ 0&R(\theta)\end{bmatrix},\ \theta\in(0,\pi)\cup(\pi,2\pi)
\end{equation}
where $b=\begin{bmatrix}b_1&b_2\\ b_3&b_4\end{bmatrix}$ such that $b_i\in \mathbf{R}$ and $b_2\neq b_3$.
A basic normal form $N\in\Sp(2n)$ is called trivial if $NR((t-1)\alpha)^{\diamond n}$ possesses no eigenvalues on $\mathbf{U}$
for sufficiently small $\alpha>0$ and $t\in[0,1)$, otherwise it is called non-trivial. Then the ultimate type $(p,q)$ of
$\omega\in\sigma(N)\cap\mathbf{U}$ is defined to be its Krein-type $(p,q)$ if $N$ is non-trivial, and to be $(0,0)$ if $N$ is trivial.
The definition of Krein-type is referred to Definition \ref{def:krein-type}. Moreover, if $\omega \in\mathbf{U}\setminus\sigma(N)$, then its ultimate
type is defined to be $(0,0)$. Note that for any $M\in\Sp(2n)$, there is a path $f:[0,T]\rightarrow\Omega^0(M)$ such that $f(0)=M$ and
$f(1)=M_1(\omega_1)\diamond \cdots \diamond M_k(\omega_k)\diamond M_0$, where $M_i(\omega_i)$ is a basic normal form of some eigenvalue
$\omega\in \mathbf{U}$ for $1\leq i\leq k$, and the eigenvalues of $M_0$ are not on $\mathbf{U}$ (where $\Omega^0(M)$ was defined in Proposition \ref{prop:propery-splitting}). Now we can give the definition of the ultimate
type of any $M\in\Sp(2n,\R)$.
\begin{defn}\label{def:ultimate type}
Under above notations, the ultimate type of $\omega$ for $M$ is defined to be $(p,q)$ by
\begin{equation}
p=\sum_{i=1}^kp_i,   \quad q=\sum_{i=1}^kq_i
\end{equation}
where $(p_i,q_i)$ is the ultimate type of $\omega$ for $M_i$.
\end{defn}

The relationship between the splitting numbers and the ultimate type of $\omega$ for $M$ is given by the following proposition in
 \cite[Thm 7,pag.192]{Lon02}:
\begin{prop}\label{prop:splitting number and ultimate type}
For any $\omega\in\mathbf{U}$ and $M\in\Sp(2n,\R)$,
\begin{equation}
S^+_M(\omega)=p, \quad S^-_M(\omega)=q
\end{equation}
where $(p,q)$ is the ultimate type of $\omega$ for $M$.
\end{prop}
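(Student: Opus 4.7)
}
The plan is to reduce the general statement to a finite list of explicit two-dimensional (and four-dimensional) computations via the symplectic normal form, exploiting the two structural features of the splitting numbers already available from Proposition \ref{prop:propery-splitting}: additivity under the $\diamond$-product and constancy along path components of $\Omega^0(M)$. First I would fix $\omega\in\U$. If $\omega\notin\sigma(M)\cap\U$, then Proposition \ref{prop:propery-splitting} already gives $S^{\pm}_M(\omega)=0$, and by Definition \ref{def:ultimate type} the ultimate type is $(0,0)$, so the equality is trivial. Hence the real work concerns $\omega\in\sigma(M)\cap\U$.

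Next I would invoke the Long symplectic normal form: there is a continuous path $f\colon[0,1]\to\Omega^{0}(M)$ with $f(0)=M$ and
\[
f(1)=M_{1}(\omega_{1})\diamond\cdots\diamond M_{k}(\omega_{k})\diamond M_{0},
\]
where each $M_{i}(\omega_{i})$ is one of the basic blocks $N_{1}(\lambda,b)$, $R(\theta)$, $N_{2}(\omega,b)$ listed in the preceding paragraphs and $\sigma(M_{0})\cap\U=\emptyset$. By the homotopy-invariance clause in Proposition \ref{prop:propery-splitting}, $S^{\pm}_{M}(\omega)=S^{\pm}_{f(1)}(\omega)$; by additivity,
\[
S^{\pm}_{f(1)}(\omega)=\sum_{i=1}^{k}S^{\pm}_{M_{i}(\omega_{i})}(\omega)+S^{\pm}_{M_{0}}(\omega),
\]
and the last summand vanishes because $\omega\notin\sigma(M_{0})$. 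On the other hand, by Definition \ref{def:ultimate type}, the ultimate type $(p,q)$ of $\omega$ for $M$ is the sum of the ultimate types $(p_{i},q_{i})$ of $\omega$ for each block. Thus the proposition reduces to verifying
\[
S^{+}_{M_{i}(\omega_{i})}(\omega)=p_{i},\qquad S^{-}_{M_{i}(\omega_{i})}(\omega)=q_{i}
\]
for every basic normal form $M_{i}(\omega_{i})$ and every $\omega\in\U$.

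To verify this block-by-block I would, for each basic normal form $N$, choose a convenient symplectic path $\gamma\colon[0,1]\to\Sp(2n_{i},\R)$ with $\gamma(0)=\Id$ and $\gamma(1)=N$ (for instance a short symplectic arc within the stratum $\Omega(N)$) and then compute the jump
\[
S^{\pm}_{N}(\omega)=\lim_{\theta\to 0^{\pm}}\iota_{\omega e^{\sqrt{-1}\theta}}(\gamma)-\iota_{\omega}(\gamma)
\]
from the very Definition \ref{defn:S-S-N}. Using the intersection-number characterization of $\iota_{\omega}$ recalled in Subsection \ref{subsec:Maslov}, the jump is read off from how many eigenvalues of $N$ cross $\omega$ under the infinitesimal rotation $\omega\mapsto\omega e^{\sqrt{-1}\theta}$, weighted by the sign of the crossing. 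For $N=R(\theta)$ the computation is one-dimensional and gives $S^{+}=1,S^{-}=0$ at $e^{\sqrt{-1}\theta}$ and $S^{+}=0,S^{-}=1$ at $e^{-\sqrt{-1}\theta}$, consistent with Krein-positive and Krein-negative eigenvalues respectively. For $N_{2}(\omega,b)$ one treats separately the non-trivial cases (where the perturbation $NR((t-1)\alpha)^{\diamond n}$ keeps eigenvalues on $\U$, so the Krein type $(p,q)$ is transmitted to the jump) and the trivial case (where all eigenvalues immediately leave $\U$, so no crossing occurs and both splitting numbers vanish, matching the convention $(p,q)=(0,0)$ of Definition \ref{def:ultimate type}). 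The same dichotomy handles $N_{1}(\pm 1,b)$ at $\omega=\pm 1$.

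The main obstacle I anticipate is precisely this last step: the case analysis for the blocks $N_{1}(\pm 1,b)$ and $N_{2}(\omega,b)$ with $b\neq 0$ is delicate because eigenvalues of multiplicity $>1$ split asymmetrically under the infinitesimal rotation, and the distinction between \emph{trivial} and \emph{non-trivial} basic normal forms must be tracked carefully to match the definition of ultimate type. Once those explicit jumps are tabulated (essentially reproducing the computations in \cite[Chapter 9]{Lon02}), the additivity and homotopy invariance recalled above assemble them into the general identity $S^{+}_{M}(\omega)=p,\ S^{-}_{M}(\omega)=q$.
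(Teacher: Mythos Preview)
The paper does not give its own proof of this proposition: it is stated as a quotation of \cite[Thm~7, pag.~192]{Lon02}, with no argument supplied. Your proposal is therefore not in competition with any proof in the paper; rather, you are sketching the proof that Long gives in Chapter~9 of \cite{Lon02}. The overall architecture you describe---reduce to a $\diamond$-product of basic normal forms via the homotopy invariance and additivity recorded in Proposition~\ref{prop:propery-splitting}, then compute $S^{\pm}_N(\omega)$ block by block and match with the ultimate type---is exactly Long's strategy, and is correct in outline.

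That said, the proposal is only a plan, not a proof: the substantive content lies entirely in the block computations you defer to ``essentially reproducing the computations in \cite[Chapter 9]{Lon02}''. In particular, for $N_2(\omega,b)$ the splitting numbers depend on finer data than you indicate (the sign of $(b_2-b_3)\sin\theta$ and of the trace $b_1+b_4$), and the trivial/non-trivial dichotomy for $N_1(\pm1,b)$ requires checking which values of $b$ send the eigenvalues off $\U$ under the test perturbation. None of this is difficult, but it is the actual work; without carrying it out your write-up remains a roadmap rather than a proof.
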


We close this section with a technical useful result which will be used in the proof of the main instability criterion and
was proved in
\cite[lemma 3.2]{HS10}.
\begin{prop}\label{thm:lemma3.2}
 Let $T:[0,\tau] \to \Sp(2n, \R)$ be a continuous symplectic path such that $T(0)$ is linearly stable.
 \begin{enumerate}
  \item  If $1 \notin \sigma\big(T(0)\big)$ then there exists $\varepsilon >0$
sufficiently small such that
  $T(s) \in \Sp(2n, \R)^+$ for $|s| \in (0, \varepsilon)$.
  \item We assume that $\dim \ker \big(T(0)-\Id_{2n}\big)=m$ and
  $\trasp{T(0)}J T'(0)\vert_V$ is non-singular for $V\= T^{-1}(0) \R^{2m}$. If
  $\ind\big({\trasp{T(0)}J T'(0)\vert_V}\big)$ is even [resp. odd] then there
exists $\delta>0$
  sufficiently small such that  $T(s) \in \Sp(2n, \R)^+$
  [resp. $T(s) \in \Sp(2n, \R)^-$] for $|s| \in (0, \varepsilon)$.
  \end{enumerate}
\end{prop}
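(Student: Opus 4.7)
The plan is to reduce everything to a sign analysis of the scalar function $\varphi(s) := \det(T(s) - \Id_{2n})$ for small $|s|$, since in the paper's convention $T(s) \in \Sp(2n,\R)^{\pm}$ means $\pm \varphi(s) > 0$.

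Part (1) is the easier claim. Linear stability of $T(0)$ means it is semisimple with spectrum on $\U$. A standard Williamson-style argument shows that the $-1$-eigenspace of $T(0)$ inherits the restriction of the symplectic form as a non-degenerate bilinear form, hence has even dimension; the other eigenvalues pair as complex conjugates into rotation blocks $R(\theta_j)$. Since $1 \notin \sigma(T(0))$, a direct computation in the block-diagonal normal form yields
\[
\varphi(0) \;=\; \prod_j 2\bigl(1-\cos\theta_j\bigr)\cdot(-2)^{\dim\ker(T(0)+\Id)} \;>\; 0,
\]
so $T(0) \in \Sp(2n,\R)^+$, and continuity of $\varphi$ propagates this to small $|s|$.

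For part (2), the $1$-eigenspace $V$ of $T(0)$ is $T(0)$-invariant and, by the same symplectic argument as above, even-dimensional of dimension $m$. I would choose a symplectic splitting $\R^{2n} = V \oplus W$ with $W = V^{\perp_\omega}$ (both $T(0)$-invariant) and write $T(s)$ in block form. The Schur complement formula gives
\[
\varphi(s) = \det\bigl(T_{WW}(s) - \Id_W\bigr)\cdot\det\Bigl(T_{VV}(s) - \Id_V - T_{VW}(s)\bigl(T_{WW}(s) - \Id_W\bigr)^{-1} T_{WV}(s)\Bigr).
\]
At $s = 0$ the first factor is strictly positive by part (1) applied to the $W$-block (where $1 \notin \sigma(T(0)|_W)$) and stays positive for small $|s|$. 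The off-diagonal blocks $T_{VW}(s), T_{WV}(s)$ vanish at $s=0$, so their contribution to the Schur complement is $O(s^2)$, while $T_{VV}(s) - \Id_V = s\,T'_{VV}(0) + O(s^2)$; expanding the determinant of this $m \times m$ matrix gives
\[
\varphi(s) \;=\; c\,s^m\det\bigl(T'_{VV}(0)\bigr) \;+\; O(s^{m+1}), \qquad c>0.
\]
Since $m$ is even, the sign of $\varphi(s)$ for $|s| \in (0,\varepsilon)$ agrees with that of $\det\bigl(T'_{VV}(0)\bigr)$.

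To identify this sign with the parity of $\ind\bigl(\trasp{T(0)}J T'(0)|_V\bigr)$, observe that $T(0)|_V = \Id_V$ gives $\trasp{T(0)}J T'(0)|_V = JT'(0)|_V$; differentiating the symplecticity relation $\trasp{T(s)}J T(s) = J$ at $s=0$ and restricting to $V$ shows that $Q(v,w) := v^{\mathsf T} J T'(0) w$ is a symmetric bilinear form on $V$, and by the symplectic orthogonality of $V$ and $W$ one has $Q(v,w) = \omega_V(v, T'_{VV}(0) w)$. In a Darboux basis of $V$ where $\omega_V$ has matrix $J_V$ (with $\det J_V = 1$), the matrix of $Q$ coincides with $J_V T'_{VV}(0)$, so $\det T'_{VV}(0) = \det Q$, whose sign is $(-1)^{\ind Q}$ by the non-degeneracy hypothesis on $Q$; this yields the claimed dichotomy. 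The step I expect to be the main obstacle is the block reduction itself: because $V$ and $W$ are only $T(0)$-invariant (not $T(s)$-invariant for $s \neq 0$), the off-diagonal blocks are genuinely $O(s)$ rather than identically zero, and one must verify carefully that their combined contribution to the Schur complement is $O(s^{m+1})$ in $\varphi$ and that the leading $s^m$-coefficient is an intrinsic quantity built solely from $T'(0)|_V$, independent of the choice of symplectic complement $W$.
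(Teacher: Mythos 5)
The paper does not contain its own proof of this proposition: it refers the reader to \cite[Lemma 3.2]{HS10}, so there is no in-text argument to compare against. Your reconstruction, however, is correct, under the natural reading of $V$ as the $1$-eigenspace $\ker\big(T(0)-\Id_{2n}\big)$, which is automatically even-dimensional because $T(0)$ is semisimple and symplectic and the symplectic form therefore restricts non-degenerately to each real eigenspace.

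Part (1) is fine: with $1\notin\sigma(T(0))$ and $T(0)$ semisimple with spectrum on $\U$, the real block normal form gives $\det\big(T(0)-\Id_{2n}\big)>0$, and continuity propagates the sign to small $|s|$. For part (2), the Schur complement reduction is the right device, and the step you flagged as the potential obstacle goes through exactly as you outlined: since both $V$ and its symplectic orthogonal complement $W$ are $T(0)$-invariant, the off-diagonal blocks $T_{VW}(s)$, $T_{WV}(s)$ vanish at $s=0$ and are hence $O(s)$, while $\big(T_{WW}(s)-\Id_W\big)^{-1}$ stays bounded near $s=0$ because $1\notin\sigma\big(T(0)\vert_W\big)$; the correction to $T_{VV}(s)-\Id_V$ inside the Schur complement is therefore $O(s^2)$ and does not disturb the leading term $s^m\det T'_{VV}(0)$, which is nonzero by the non-degeneracy hypothesis. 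The leading sign, being $\det\big(T(0)\vert_W-\Id_W\big)\cdot\sgn\det T'_{VV}(0)$ with the first factor strictly positive, is intrinsic even though each factor separately depends on the choice of $W$. Finally, the identification $\sgn\det T'_{VV}(0)=(-1)^{\ind Q}$ for the symmetric form $Q(v,w)=v^{\mathsf T}\trasp{T(0)}J T'(0)w$ on $V$ is correct; a possible sign ambiguity in writing $Q$ as $\pm$ the pairing of $v$ with $T'_{VV}(0)w$ via the restricted symplectic form is harmless because $\dim V=m$ is even, so $\ind(-Q)=m-\ind Q$ has the same parity as $\ind Q$. The argument is complete and supplies a self-contained proof that the paper itself omits.
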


\subsubsection{On the Maslov index after Cappell-Lee-Miller }
In this subsection, we will give the definition of the Maslov index following
Cappell-Lee-Miller.

Let $J=\begin{bmatrix} 0&-\Id_n\\\Id_n&0 \end{bmatrix}$,
 then $(\R^{2n},\omega)$ can be seen as a symplectic vector space with the
symplectic form $\omega$ such that $\omega(x,y)=\langle Jx,y\rangle$ for any
 $x,y\in \R^{2n}$. A subspace $\L$ is Lagrangian if and only if $\omega\vert
_{\L}=0 $ and $\dim\L=n$. Let us consider the Lagrangian Grassmannian of
$(\R^{2n},\omega)$, namely the set
$\Lagr(\R^{2n}, \omega)$ of all Lagrangian subspaces. Recall that it is a real
compact and connected
analytic embedded $n(n+1)/2$-dimensional submanifold of the Grassmannian manifold of $\R^{2n}$.
Given  $L_0\in\Lagr(\R^{2n},\omega)$ and any non-negative integer $j \in
\Set{0,\dots, n}$, we define the sets
$\Lagr^j(L_0;\R^{2n})\=\big\{L\in\Lagr(\R^{2n},\omega):\dim(L\cap L_0)=j\big\}$
and we observe that $\Lagr(\R^{2n},\omega)\= \bigcup_{j=0}^n \Lagr^j(L_0;\R^{2n})$. It is well-known
that
$\Lagr^j(L_0;\R^{2n})$ is a connected embedded analytic  submanifold of
$\Lagr(\R^{2n},\omega)$  having
codimension equal to $ j(j+1)/2$. In particular $\Lagr^1(L_0;\R^{2n})$
has codimension $1$ and for  $ j \geq 2$ the codimension of
$\Lagr^j(L_0;\R^{2n})$ in $\Lagr(\R^{2n},\omega)$
is bigger or equal to $3$.  We define the Maslov (singular) cycle with vertex at
$L_0$ as follows:
\[
\Sigma(L_0;\R^{2n}) \= \bigcup_{j=1}^n \Lagr^j(L_0;\R^{2n}).
\]
We note that the Maslov  cycle is the closure of the lowest
codimensional stratum
$\overline{\Lagr^1(L_0;\R^{2n})}$. In particular, $\Lagr^0(L_0;\R^{2n})$, the
set of all Lagrangian
subspaces that are transversal to $L_0$, is an open and  dense subset of
$\Lagr(\R^{2n},\omega)$.
The (top stratum) codimensional 1-submanifold $\Lagr^1(L_0;\R^{2n})$ in
$\Lagr(\R^{2n},\omega)$ is co-oriented or
otherwise stated it carries a transverse orientation. In fact
given $\varepsilon >0$, for each $L \in \Lagr^1(L_0;\R^{2n})$, the smooth
path of Lagrangian subspaces $\ell:(-\varepsilon, \varepsilon) \to
\Lagr(\R^{2n},\omega)$ defined by
$\ell(t)\=\exp(tJ)$ crosses $\Lagr^1(L_0;\R^{2n})$ transversally. The desired
transverse orientation is given by
the direction along the path when the parameter runs between $(-\varepsilon,
\varepsilon)$.
Thus the Maslov cycle is two-sidedly embedded in $\Lagr(\R^{2n},\omega)$. Based
on
these properties, Arnol'd in \cite{Arn67}, defined an intersection index for
closed loops in  $(\R^{2n}, \omega)$ via
transversality arguments. Following authors in \cite{CLM94, HS09} we introduce
the following Definition.
\begin{defn}\label{def:Maslov-index}
Let $L_0 \in \Lagr(\R^{2n},\omega)$ and, for $a<b$, let  $\ell\in \mathscr
C^0\big([a,b],\Lagr(\R^{2n},\omega)\big)$. We define
the Maslov index of $\ell$ with respect to $L_0$ as the integer given by
\begin{equation}\label{eq:intersection-number}
 \iCLM(L_0, \ell)\=\left[\exp(-\varepsilon J)\,\ell: \Sigma(L_0;\R^{2n})\right]
\end{equation}
where $\varepsilon \in (0,1)$ is sufficiently small and where the right-hand
side denotes the intersection number.
\end{defn}
\begin{rem}
A few Remarks on the Definition  \ref{def:Maslov-index} are in order. By the
basic geometric
observation given in \cite[Lemma 2.1]{CLM94}, it readily follows that there
exists $\varepsilon >0$
sufficiently small such that $\exp(-\varepsilon J)\,\ell(a), \exp(-\varepsilon
J)\,\ell(b)$ doesn't lie
on $\Sigma(L_0;\R^{2n})$. By \cite[Step 2, Proof of Theorem 2.3]{RS93}, there
exists a perturbed path $\widetilde \ell$
having only simple crossings(namely the path $\ell$ intersects the Maslov cycle
transversally and  in
the top stratum). Since, simple crossings are isolated, on a compact interval
are in a finite number. To each
crossing instant  $t_i \in (a,b)$ we associate the number $s(t_i)= 1$ (resp.
$s(t_i)=-1$) according to the fact that,
in a sufficiently small neighbourhood of $t_i$, $\widetilde \ell$ have the same
(resp. opposite) direction of
$\exp(t\,J) \widetilde \ell(t_i)$. Then the intersection number given in Formula
\eqref{eq:intersection-number} is
equal to the summation of $s(t_i)$, where the sum runs over all crossing
instants $s(t_i)$.
\end{rem}
\noindent

The Maslov index given in Definition \ref{def:Maslov-index} have many important
properties (cfr. \cite{RS93, CLM94}
for further dails).
\begin{enumerate}
 \item[]{\bf Property I (Reparametrisation Invariance)\/}
 Let $\psi:[a,b] \to [c,d]$ be a  continuous function with $\psi(a)=c$ and
$\psi(b)= d$.
 Then $ \iCLM(L_0, \ell)=\iCLM(L_0, \ell\circ \psi).$
 \item[]{\bf Property II (Homotopy invariance Relative to the Ends)\/} Let
\[
\bar  \ell: [0,T] \times [a,b] \to
\Lagr(V,\omega):(s,t)\mapsto \bar\ell(s,t)
\]
be a continuous two-parameter family of Lagrangian subspaces such that
$\dim\big(L_0\cap \bar \ell(s,a)\big) $
and $\dim\big(L_0\cap \bar{\ell(s,b)}\big) $ are independent on $s$. Then $
 \iCLM(L_0, \bar\ell_0) = \iCLM(L_0, \bar\ell_1)$
where $\bar \ell_0(\cdot)\= \bar\ell(0, \cdot)$ and
$\bar \ell_1(\cdot)\= \bar\ell(1, \cdot)$.
\item[]{\bf Property III (Path Additivity)\/} If $c \in (a,b)$, then
\[
\iCLM(L_0, \ell)= \iCLM(L_0, \bar\ell\vert_{[a,c]})+ \iCLM(L_0,
\bar\ell\vert_{[c,b]}).
\]
\item[]{\bf Property IV (Symplectic Invariance)\/} Let $\phi \in \mathscr
C^0\big([a,b], \Sp(V,\omega)\big)$ be a
 continuous path in the (closed) symplectic group $\Sp(V,\omega)$ of all
symplectomorphisms of $(V,\omega)$. Then
 \[
  \iCLM(L_0, \ell)= \iCLM\big(\phi(t)\, L_0, \phi(t)\, \ell(t)\big), \qquad t
\in [0,T].
 \]
 \item[]{\bf Property V (Symplectic Additivity)\/} For $i=1,2$ let
 $(V_i, \omega_i)$  be symplectic  vector spaces, $L_i \in \Lagr(V_i,\omega_i)$
and let $\ell_i \in
 \mathscr C^0\big([a,b], \Lagr(V_i,\omega_i\big)$. Then
 \[
\iCLM( L_1\oplus L_2,\ell_1\oplus \ell_2)= \iCLM( L_1,\ell_1)+ \iCLM(
L_2,\ell_2).
 \]
 \end{enumerate}

One  efficient technique for computing this invariant, was introduced (in the
non-degenerate case)
by the authors in \cite{RS93} through  the so-called crossing forms and,
generalised (in the degenerate
situation) by authors in \cite{GPP03, GPP04}.
For $\varepsilon >0$ let $\ell^* :(-\varepsilon, \varepsilon) \to
\Lagr(\R^{2n},\omega)$ be a $\mathscr C^1$-path such that
$\ell^*(0)=L$. Let $L_1$ be a fixed Lagrangian complement of
$L$ and, for $v \in L$ and for sufficiently small $t$ we define $w(t) \in L_1$
such that
$v+w(t) \in \ell^*(t)$. Then the  form
\begin{equation}\label{eq:laq}
 Q[v]=\dfrac{d}{dt}\Big|_{t=0} \omega\big(v, w(t)\big)
\end{equation}
is independent of the choice of $L_1$. A {\em crossing instant\/} $t_0$ for the
continuous curve $\ell:[a,b] \to \Lagr(\R^{2n},\omega)$ is an instant such that
$\ell(t_0)\in \Sigma(L_0;\R^{2n})$. If the
curve is $\mathscr C^1$, at each
crossing, we define the crossing form as  the quadratic form on  $\ell(t_0)\cap
L_0$ given by
\[
\Gamma(\ell, L_0, t_0)= Q\big(\ell(t_0), \dot \ell(t_0)\big)\Big\vert_{\ell(t_0)\cap L_0}
\]
where $Q$ was defined in Formula \eqref{eq:laq}. A crossing $t_0$ is called
regular if the crossing
form is non-degenerate; moreover if the curve $\ell$ has only
regular crossings we shall refer as a regular path.  (Heuristically, $\ell$ has
only regular
crossings if and only if it is transverse to $\Sigma(L_0)$).  Following authors
in \cite{LZ00b}, if
$\ell: [a,b] \to \Lagr(\R^{2n},\omega)$ is a regular $\mathscr C^1$-path,
then the crossing instants are in a finite number and the Maslov index is given
by:
\[
 \iCLM(L_0,\ell)=\coindex\left[{\Gamma(\ell(a), L_0, a)}\right]+
 \sum_{\substack{t \in(a,b)}}\ssgn{\Gamma(\ell(t), L_0, t)} -
 \iMor{\left[\Gamma(\ell, L_0, b)\right]},
\]
where $\coindex, \iMor $ denotes respectively the number of positive (coindex),
negative eigenvalues (
index) in the  Sylvester's Inertia Theorem and where $\sgn\=\coindex-\iMor$
denotes the (signature). We
observe that any $\mathscr C^1$-path is homotopic through a fixed endpoints
homotopy to a path
having  only regular  crossings.


\subsection{On the spectral flow}\label{subsec:spectral-flow}

The aim of this subsection is to briefly recall the Definition and the main
properties of the spectral
flow for a continuous path of closed selfadjoint Fredholm operator.
Our basic reference is \cite{Wat15}  and references therein.

Let $\mathcal H$ be a separable complex Hilbert space and let
$A: \mathcal D(A) \subset \mathcal H \to \mathcal H$ be  a  selfadjoint
Fredholm
operator. By the Spectral decomposition Theorem (cf., for instance,
\cite[Chapter III,
Theorem 6.17]{Kat80}), there is an orthogonal decomposition $
 \mathcal H= E_-(A)\oplus E_0(A) \oplus E_+(A),$
that reduces the operator $A$
and has the property that
\[
 \sigma(A) \cap (-\infty,0)=\sigma\big(A_{E_-(A)}\big), \quad
 \sigma(A) \cap \{0\}=\sigma\big(A_{E_0(A)}\big),\quad
 \sigma(A) \cap (0,+\infty)=\sigma\big(A_{E_+(A)}\big).
\]
\begin{defn}\label{def:essential}
Let $A \in \mathcal{CF}^{sa}(\mathcal H)$. We  term $A$ {\em essentially
positive\/}
if $\sigma_{ess}(A)\subset (0,+\infty)$, {\em essentially negative\/} if
$\sigma_{ess}(A)\subset (-\infty,0)$ and finally
{\em strongly indefinite\/} respectively if $\sigma_{ess}(A) \cap (-\infty,
0)\not=
\emptyset$ and $\sigma_{ess}(A) \cap ( 0,+\infty)\not=\emptyset$.
\end{defn}
\noindent
If $\dim E_-(A)<\infty$,
we define its {\em Morse index\/}
as the integer denoted by $\iindex{A}$ and defined as $
 \iindex{A} \= \dim E_-(A).$
Given $A \in\cfsa(\mathcal H)$, for  $a,b \notin
\sigma(A)$ we set
\[
\mathcal P_{[a,b]}(A)\=\Real\left(\dfrac{1}{2\pi\, i}\int_\gamma
(\lambda-A)^{-1} d\, \lambda\right)
\]
where $\gamma$ is the circle of radius $\frac{b-a}{2}$ around the point
$\frac{a+b}{2}$. We recall that if
$[a,b]\cap \sigma(A)$ consists of  isolated eigenvalues of finite type then
$
 \im \mathcal P_{[a,b]}(A)= E_{[a,b]}(A)\= \bigoplus_{\lambda \in (a,b)}\ker
(\lambda -A);
$
(cf. \cite[Section XV.2]{GGK90}, for instance) and $0$ either belongs in the
resolvent set of $A$ or it is an isolated eigenvalue of finite multiplicity.
Let us now consider the {\em graph distance topology\/} which is the topology
induced by the {\em gap
metric\/} $d_G(A_1, A_2)\=\norm{P_1-P_2}$
where $P_i$ is the projection onto the graph of $A_i$ in the product space
$\mathcal H
\times \mathcal H$. The next result allow us to  define the spectral flow for
gap
continuous paths in  $\cfsa(\mathcal H)$.
\begin{prop}\label{thm:cor2.3}
 Let $A_0 \in \cfsa(\mathcal H)$ be fixed.
 \begin{enumerate}
  \item[(i)] There exists a positive real number $a \notin \sigma(A_0)$ and an
open neighborhood $\mathscr N \subset  \cfsa(\mathcal H)$ of $A_0$ in the gap
topology such that $\pm a \notin
\sigma(A)$ for all $A \in  \mathscr N$ and the map
 \[
  \mathscr N \ni A \longmapsto \mathcal P_{[-a,a]}(A) \in \Lin(\mathcal H)
 \]
is continuous and the projection $\mathcal P_{[-a,a]}(A)$ has constant finite
rank for all $A \in \mathscr N$.
 \item[(ii)] If $\mathscr N$ is a neighborhood as in (i) and $-a \leq c \leq d
\leq a$ are such that $c,d \notin
 \sigma(A)$ for all $A \in \mathscr N$, then $A \mapsto \mathcal P_{[c,d]}(A)$
is
continuous on $\mathscr N$.
 Moreover the rank of $\mathcal P_{[c,d]}(A) \in \mathscr N$ is finite and
constant.
 \end{enumerate}
\end{prop}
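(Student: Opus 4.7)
The plan is to combine the structural information about the spectrum of a selfadjoint Fredholm operator near $0$ with the gap-continuity of the resolvent on compact subsets of the resolvent set, and then extract the projection via the Riesz integral formula. Since $A_0 \in \cfsa(\HH)$, the essential spectrum $\sigmaess(A_0)$ is a closed subset of $\R$ with $\dist(0,\sigmaess(A_0))>0$, and on any bounded interval disjoint from $\sigmaess(A_0)$ the spectrum of $A_0$ consists of finitely many eigenvalues of finite multiplicity. I would therefore pick $a>0$ small enough that $[-2a,2a]\cap\sigmaess(A_0)=\emptyset$ and $\pm a\notin\sigma(A_0)$; the latter is possible because the eigenvalues of $A_0$ inside $(-2a,2a)$ are isolated.

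Next I would invoke the classical fact from Kato's perturbation theory that, on the gap-metric space of closed operators, the resolvent map is jointly continuous on the open set $\{(\lambda,A) : \lambda\in\rho(A)\}$, with continuity uniform in $\lambda$ on compact subsets of $\rho(A_0)$. Applied to the compact circle $\gamma$ of radius $a$ centered at $0$, which by the choice of $a$ is contained in $\rho(A_0)$, this produces a gap-neighborhood $\mathscr N$ of $A_0$ such that $\gamma\subset\rho(A)$ for every $A\in\mathscr N$ (so in particular $\pm a\notin\sigma(A)$) and the map
\[
\mathscr N\ni A\longmapsto \mathcal P_{[-a,a]}(A) = \Real\Bigl(\tfrac{1}{2\pi i}\int_\gamma(\lambda-A)^{-1}\,d\lambda\Bigr)\in\Lin(\HH)
\]
is norm continuous. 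For the rank statement, shrink $\mathscr N$ further so that $\|\mathcal P_{[-a,a]}(A)-\mathcal P_{[-a,a]}(A_0)\|<1$, and apply the standard principle that two projections at norm distance less than $1$ are conjugate via an invertible operator and hence have the same rank; this rank equals $\dim E_{[-a,a]}(A_0)$, which is finite because $[-a,a]\cap\sigma(A_0)$ consists of finitely many eigenvalues of finite multiplicity.

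Part (ii) is proved identically after replacing $\gamma$ by the circle of radius $\tfrac{d-c}{2}$ centered at $\tfrac{c+d}{2}$: the hypothesis that $c,d\notin\sigma(A)$ for every $A\in\mathscr N$ ensures that this contour lies in $\rho(A)$ (after possibly further shrinking $\mathscr N$), and the remainder of the argument is verbatim. The main technical obstacle is the uniform gap-continuity of the resolvent along the compact contour, which rests on a quantitative perturbation estimate controlling $\|(\lambda-A)^{-1}-(\lambda-A_0)^{-1}\|$ by $d_G(A,A_0)$ uniformly in $\lambda\in\gamma$. Once this estimate is established, the Riesz projection formula packages it into continuity of $A\mapsto\mathcal P_{[-a,a]}(A)$, and the rank-constancy follows automatically from the elementary projection principle above.
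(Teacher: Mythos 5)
The paper itself offers no proof of this proposition; it simply refers the reader to \cite[Proposition 2.10]{BLP05}, so there is no in-text argument to compare against. Judged on its own, your proof follows the standard route — Riesz projections over compact contours plus gap-continuity of the resolvent — and it is essentially correct. The choice of $a$ using the Fredholm property ($0\notin\sigmaess(A_0)$, hence only finitely many isolated eigenvalues of finite multiplicity in a small interval), the Kato-type estimate giving uniform resolvent convergence on the compact contour, the norm-continuity of $A\mapsto\mathcal P_{[-a,a]}(A)$, and rank constancy via the principle that orthogonal projections at distance $<1$ have equal rank are all the right ingredients, and part (i) is sound as written.

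Two small points in part (ii) deserve sharpening. First, the hedge ``after possibly further shrinking $\mathscr N$'' is not consistent with the statement: the proposition fixes $\mathscr N$ from part (i) and asserts the conclusion on that same $\mathscr N$. In fact no shrinking is needed to get the contour into $\rho(A)$: the circle through $c$ and $d$ meets $\R$ only at $c,d$, which lie in $\rho(A)$ by hypothesis for every $A\in\mathscr N$, and the rest of the circle lies in $\rho(A)$ automatically since $A$ is selfadjoint. Second, the rank-constancy argument you used in (i) — shrinking $\mathscr N$ so all projections are within distance $1$ of $\mathcal P_{[-a,a]}(A_0)$ — cannot be repeated verbatim for (ii), since $\mathscr N$ is already fixed and the estimate you arranged was specific to the interval $[-a,a]$, not to arbitrary $[c,d]\subset[-a,a]$. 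What you do get for free from continuity of $A\mapsto\mathcal P_{[c,d]}(A)$ is \emph{local} constancy of the rank; to conclude global constancy on $\mathscr N$ one should either observe that $\mathscr N$ can be taken connected in the construction of part (i), or supply a short extra argument. With that adjustment the proof is complete and matches the intent of \cite{BLP05}.
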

\begin{proof}
For the proof of this result we refer the interested reader to
\cite[Proposition 2.10]{BLP05}.
\end{proof}
Let $\mathcal A:[c,d] \to \cfsa(\mathcal H)$ be a gap continuous path.  As
consequence
of Proposition \ref{thm:cor2.3}, for every $t \in [c,d]$ there exists $a>0$ and
an open
connected neighborhood $\mathscr N_{t,a} \subset \cfsa(\mathcal H)$ of
$\mathcal
A(t)$
such that $\pm a \notin \sigma(A)$ for all $A\in \mathscr N_{t,a}$ and the map
$\mathscr N_{t,a} \in A \longmapsto \mathcal P_{[-a,a]}(A) \in \mathcal
B$
is continuous and hence $ \rk\left(\mathcal P_{[-a,a]}(A)\right)$ does not
depends on $A \in \mathscr N_{t,a}$. Let us consider the open covering
of the interval $[c,d]$ given by the
pre-images of the neighborhoods $\mathcal
N_{t,a}$ through $\mathcal A$ and, by choosing a sufficiently fine partition of
the interval $[a,b]$ having diameter less than the Lebesgue number
of the covering, we can find  $c=:t_0 < t_1 < \dots < t_n:=d$,
operators $T_i \in \cfsa(\mathcal H)$ and
positive real numbers $a_i $, $i=1, \dots , n$ in such a way the restriction of
the path $\mathcal A$ on the
interval $[t_{i-1}, t_i]$ lies in the neighborhood $\mathscr N_{t_i, a_i}$ and
hence the
$\dim E_{[-a_i, a_i]}(\mathcal A_t)$ is constant for $t \in [t_{i-1},t_i]$,
$i=1, \dots,n$.
\begin{defn}\label{def:spectral-flow-unb}
The \emph{spectral flow of $\mathcal A$} (on the interval $[c,d]$) is defined by
\[
\spfl(\mathcal A, [c,d])\=\sum_{i=1}^n \dim\,E_{[0,a_i]}(\mathcal A_{t_i})-
 \dim\,E_{[0,a_i]}(\mathcal A_{t_{i-1}}) \in \Z.
\]
\end{defn}
(In shorthand Notation we  denote  $\spfl(\mathcal A, [a,b])$ simply  by
$\spfl(\mathcal A)$ if no confusion  is possible).
The spectral flow as given in Definition \ref{def:spectral-flow-unb} is
well-defined
(in the sense that it is independent either on the partition or on the $a_i$)
and only depends on
the continuous path $\mathcal A$. (Cfr. \cite[Proposition 2.13]{BLP05} and
references therein).
Here we list one of the useful properties of the spectral flow and we refer to \cite{BLP05}
for further details.
\begin{itemize}
 \item[]  {\bf (Path Additivity)\/} If $\mathcal A_1,\mathcal
A_2: [a,b] \to
 \cfsa(\mathcal H)$ are two continuous path such that
$\mathcal A_1(b)=\mathcal A_2(a)$, then
 $
  \spfl(\mathcal A_1 *\mathcal A_2) = \spfl(\mathcal A_1)+\spfl(\mathcal A_2).
 $
\end{itemize}
As already observed, the spectral flow, in general,  depends on the whole path
and not
just on the ends. However, if the path has a special form, it actually depends
on the
end-points. More precisely, let  $\mathcal A ,\mathcal B\in \cfsa(\mathcal H)$
and let $\widetilde{\mathcal A}:[a,b] \to \cfsa(\mathcal H)$ be the path
pointwise defined by $\widetilde{\mathcal A}(t)\=\mathcal A+ \widetilde{\mathcal
B}(t)$  where $
\widetilde{\mathcal B}$ is any continuous curve of $\mathcal A$-compact
operators parametrised on $[a,b]$
such that  $\widetilde{\mathcal B}(a)\=0$ and  $ \widetilde{\mathcal B}(b)\=
\mathcal B$. In this case,
the spectral flow depends of the
path $\widetilde A$, only on the endpoints (cfr. \cite{LZ00a} and reference
therein).
\begin{rem}
 It is worth noticing that, since every operator $\widetilde{\mathcal A}(t)$ is
a compact perturbation of a
 a fixed one, the path $\widetilde{\mathcal A}$ is actually a continuous path
into $\Lin(\mathcal W; \mathcal H)$,
 where $\mathcal W\=\mathcal D(\mathcal A)$.
\end{rem}
\begin{defn}\label{def:rel-morse-index}(\cite[Definition 2.8]{LZ00a}).
 Let $\mathcal A ,\mathcal B\in \cfsa(\mathcal H)$ and we assume that $\mathcal
B$ is
 $\mathcal A$-compact (in the sense specified above). Then the
{\em  relative Morse index of the pair $\mathcal A$, $\mathcal A+\mathcal B$\/}
is
defined by $
  \irel(\mathcal A, \mathcal A+\mathcal B)=-\spfl(\widetilde{\mathcal A};[a,b])$
where $\widetilde{\mathcal A}\=\mathcal A+ \widetilde{\mathcal B}(t)$ and where
$
\widetilde{\mathcal B}$ is any continuous curve parametrised on $[a,b]$
of $\mathcal A$-compact operators such that
$\widetilde{\mathcal B}(a)\=0$ and
$ \widetilde{\mathcal B}(b)\= \mathcal B$.
\end{defn}
\noindent
In the special case in which the Morse index of both operators $\mathcal A$ and
$\mathcal A+\mathcal B$ are
finite, then
\begin{equation}\label{eq:miserve}
\irel(\mathcal A, \mathcal A+\mathcal B)=\iindex{\mathcal A +\mathcal
B}-\iindex{\mathcal A}.
\end{equation}

Let  $\mathcal W, \mathcal H$ be separable Hilbert spaces with a dense and
continuous
inclusion $\mathcal W \hookrightarrow \mathcal H$ and let
$\mathcal A:[a,b] \to \cfsa(\mathcal H)$  having fixed domain $\mathcal W$. We
assume that $\mathcal A$ is
a continuously differentiable path  $\mathcal A: [a,b] \to \cfsa(\mathcal H)$
and
we denote by $\dot{\mathcal A}_{\lambda_0}$ the derivative of
$\mathcal A_\lambda$ with respect to the parameter $\lambda \in [a,b]$ at
$\lambda_0$.
\begin{defn}\label{def:crossing-point}
 An instant $\lambda_0 \in [a,b]$ is called a {\em crossing instant\/} if
$\ker\, \mathcal A_{\lambda_0} \neq 0$. The
 crossing form at $\lambda_0$ is the quadratic form defined by
\begin{equation}
 \Gamma(\mathcal A, \lambda_0): \ker \mathcal A_{\lambda_0} \to \R, \quad
\Gamma(\mathcal A, \lambda_0)[u] =
\langle \dot{\mathcal A}_{\lambda_0}\, u, u\rangle_\mathcal H.
\end{equation}
Moreover a  crossing $\lambda_0$ is called {\em regular\/}, if $\Gamma(\mathcal
A, \lambda_0)$ is non-degenerate.
\end{defn}
We recall that there exists $\varepsilon >0$ such that   $\mathcal A +\delta \,
\Id_\mathcal H$ has only regular crossings
  for almost every $\delta \in (-\varepsilon, \varepsilon)$. (Cfr., for instance
\cite[Theorem 2.6]{Wat15}
  and references therein).
In the special case in which all crossings are regular, then the spectral flow
can be easily computed through  the
crossing forms. More precisely the following result  holds.
\begin{prop}\label{thm:spectral-flow-formula}
 If $\mathcal A:[a,b] \to \cfsa(\mathcal W, \mathcal H)$ has only regular
crossings then they are in a finite
 number and
 \[
  \spfl(\mathcal A, [a,b]) = -\iMor{\left[\Gamma(\mathcal A,a)\right]}+
\sum_{t_0 \in (a,b)}
  \sgn\left[\Gamma(\mathcal A, t_0)\right]+
  \coiindex{\Gamma(\mathcal A,b)}
 \]
where the sum runs over all the crossing instants.
\end{prop}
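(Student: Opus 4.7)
}

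The plan is to show first that regular crossings are isolated, then to localize the spectral flow around each crossing via the path additivity property, and finally to reduce the computation to a finite-dimensional eigenvalue perturbation problem where the sign information is read off from $\Gamma(\mathcal A, t_0)$. Fix a regular crossing $t_0 \in [a,b]$. Since $\ker \mathcal A_{t_0}$ is finite dimensional (as $\mathcal A_{t_0}$ is Fredholm), by Proposition \ref{thm:cor2.3} there exists a small $\alpha > 0$ with $\pm \alpha \notin \sigma(\mathcal A_t)$ and $E_{[-\alpha,\alpha]}(\mathcal A_t)$ of constant finite rank $k = \dim \ker \mathcal A_{t_0}$ for $t$ in some open neighbourhood $U_{t_0}$ of $t_0$. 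Via the spectral projector $P(t) \= \mathcal P_{[-\alpha,\alpha]}(\mathcal A_t)$ I would pass to the reduced $k$-dimensional selfadjoint operator $\widehat{\mathcal A}(t) \= P(t)\mathcal A_t P(t)$ acting on the trivialised bundle $\im P(t)$. The spectral flow contribution of the restriction $\mathcal A\vert_{U_{t_0}}$ then equals the spectral flow of $\widehat{\mathcal A}$, which by Definition \ref{def:spectral-flow-unb} is purely an eigenvalue-counting problem in a finite-dimensional space.

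Now I would apply the Kato perturbation theory in the trivialised bundle: after identifying $\im P(t)$ with $\ker \mathcal A_{t_0}$ through the parallel transport induced by $P(\cdot)$, the derivative of $\widehat{\mathcal A}$ at $t_0$, restricted to $\ker \mathcal A_{t_0}$, coincides with the crossing form $\Gamma(\mathcal A, t_0)$ given in Definition \ref{def:crossing-point}. Since $\Gamma(\mathcal A, t_0)$ is non-degenerate by the regularity assumption, the eigenvalues of $\widehat{\mathcal A}(t)$ that vanish at $t_0$ depart from $0$ with strictly nonzero speed whose sign equals the sign of the corresponding eigenvalue of $\Gamma(\mathcal A, t_0)$. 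In particular $\widehat{\mathcal A}(t)$ is invertible for $0<|t-t_0|<\eps$ for some $\eps>0$, so $t_0$ is isolated as a crossing instant. Compactness of $[a,b]$ then yields finiteness of the crossing set.

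With this in hand I would use path additivity to choose $a=t_0 < s_1 < t_1 < s_2 < \dots < s_N = b$ such that each crossing is a unique $s_j$ and no crossings occur in the open intervals $(t_{j-1}, s_j)\cup (s_j, t_j)$. On each closed interval $[t_{j-1}, t_j]$ containing an interior regular crossing $s_j$, the rank constancy of $P(t)$ makes the computation finite-dimensional: exactly $\coindex[\Gamma(\mathcal A, s_j)]$ eigenvalues cross $0$ from below to above (contributing $+1$ each to the spectral flow) and $\iMor[\Gamma(\mathcal A, s_j)]$ cross from above to below (contributing $-1$), yielding a local contribution $\sgn[\Gamma(\mathcal A, s_j)]$. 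Summing these over interior crossings gives the middle term of the formula.

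The main subtlety, and the step that deserves most care, is the treatment of the endpoints $a$ and $b$, because the definition of spectral flow is inherently asymmetric at the boundary points of the parameter interval. Directly from Definition \ref{def:spectral-flow-unb} applied on a small interval $[a, t_1]$ to the right of $a$, the quantity $\dim E_{[0, \alpha]}(\mathcal A_{t_1}) - \dim E_{[0, \alpha]}(\mathcal A_a)$ counts how many eigenvalues initially at $0$ remain in $[0,\alpha]$ at time $t_1$ (those are $\coindex[\Gamma(\mathcal A,a)]$ in number) minus those counted in $\dim E_{[0,\alpha]}(\mathcal A_a) = \dim\ker \mathcal A_a = \coindex[\Gamma(\mathcal A,a)] + \iMor[\Gamma(\mathcal A,a)]$; this yields precisely $-\iMor[\Gamma(\mathcal A,a)]$. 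A symmetric computation on $[t_{N-1}, b]$ yields $+\coindex[\Gamma(\mathcal A,b)]$, and combining the three pieces gives the stated formula. The hardest part is really only this careful bookkeeping at the endpoints; the interior analysis is a standard consequence of analytic perturbation theory applied to the finite-rank reduction $\widehat{\mathcal A}(t)$.
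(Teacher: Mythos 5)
Your argument is correct and reconstructs exactly the approach of Robbin--Salamon \cite{RS95}, to which the paper delegates the proof without giving any details of its own: you reduce to a finite-rank spectral projection $P(t)=\mathcal P_{[-\alpha,\alpha]}(\mathcal A_t)$ via Proposition \ref{thm:cor2.3}, identify the derivative of the compressed family on $\ker\mathcal A_{t_0}$ with the crossing form of Definition \ref{def:crossing-point}, deduce isolation (hence finiteness) of regular crossings, and obtain the local contributions by eigenvalue counting. Your careful treatment of the asymmetry at the endpoints, namely that $\dim E_{[0,\alpha]}(\mathcal A_a)=\dim\ker\mathcal A_a=\iMor[\Gamma(\mathcal A,a)]+\coindex[\Gamma(\mathcal A,a)]$ while just after $a$ only $\coindex[\Gamma(\mathcal A,a)]$ eigenvalues remain nonnegative, is precisely what produces the $-\iMor$ and $+\coindex$ boundary terms consistently with Definition \ref{def:spectral-flow-unb}, and matches the convention used in the paper.
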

\begin{proof}
 The proof of this result follows by arguing as in \cite{RS95}. 
\end{proof}

\subsection{Index Theorem for Hamiltonian Systems}\label{subsec:Index-Theorem}

Let  $H \in \mathscr C^2\Big([0,T] \times \R^{2n},\R\Big)$ be a  time-dependent
Hamiltonian function
and let $L$ be a Lagrangian subspace of the symplectic space $(\R^{2n}\oplus
\R^{2n}, -\omega \oplus \omega)$.
We define the closed (in $L^2$) subspace
$\mathcal D(T, L)\= \Set{z\in W^{1,2}([0,T], \R^{2n})| \big(z(0), z(T)\big) \in
L}$ and
denoting by $\bar{\mathcal D(T,L)}$ the closure in the $W^{1/2,2}$-norm
topology of
$\mathcal D(T,L)$, let us consider the {\em symplectic action functional\/}
$\mathscr A_H: \bar{\mathcal D(T,L)}\to \R$ defined by
\begin{equation}\label{eq:symplectic-action}
 \mathbb A_H(z)\=\int_0^T \left[\Big\langle -J\, \dfrac{dz(t)}{dt},
z(t)\Big\rangle- H\big(t, z(t)\big)\right]\, dt.
\end{equation}
By standard computation it follows that a critical point of $\mathbb A_H$ is a
weak (in the Sobolev
sense)-solution of the  following boundary value problem
\begin{equation}\label{eq:ham-sys-1}
\begin{cases}
 \dot z(t)= \nabla H\big(t, z(t) \big),  \qquad t \in [0,T]\\
 \big(z(0), z(T)\big) \in L.
\end{cases}
\end{equation}
\begin{rem}
We observe that the periodic solutions can be obtained by setting $L=\Delta$
where $\Delta$ denotes the diagonal subspace
in the product space $\R^{2n} \oplus \R^{2n}$.
\end{rem}
Let $z$ be a solution of the Hamiltonian System given in Equation
\eqref{eq:ham-sys-1} and let us denote by $\gamma$ the fundamental solution of
its linearisation along $z$; namely
$\gamma: [0,T] \to \Sp(2n)$ is the solution of the following Cauchy problem
\begin{equation}\label{eq:ham-sys-1-lin}
\begin{cases}
 \dot\gamma(t)= D^2H\big(t, z(t)\big)\, \gamma(t),  \qquad t \in [0,T]\\
 \gamma(0)=\Id_{2n}
\end{cases}.
\end{equation}
We set  $B(t)\=D^2H\big(t, z(t)\big)$ and we set $A_1\=-
J\dfrac{d}{dt} - B(t)$ and
$ A_0\=-J\dfrac{d}{dt}$ be the
closed selfadjoint Fredholm operators in $L^2$ with domain
\[
\mathcal D(T, L)\= \Set{z\in W^{1,2}([0,T], \R^{2m})| \big(z(0), z(T)\big) \in
L}.
\]
We define the {\em relative Morse
index\/} of $z$ as follows
\begin{equation}\label{eq:relative-Morse-index}
 \iRel(z)\= -\spfl\left(
A;[0,T]\right)
\end{equation}
where $ A:[0,1] \to \cfsa(\mathcal H)$ is a continuous path of closed
selfadjoint Fredholm operators defined
by  $ A(s)\= A_0 +  B(s)$ where the continuous path
$s\mapsto B(s)$ is such that $B(0)= 0$ and $
B(1)\= B$
on the $s$-independent domain $\mathcal D(T,L)$. We define the {\em Maslov
index\/} of the solution $z$ as follows
\begin{equation}\label{eq:Maslov-z}
 \iMas(z)\= \iCLM\big(L, \Graph(\gamma);[0,T]\big).
\end{equation}
We observe that $z_s \in \ker \left(\mathcal A(s)|_{\mathcal D(T, L)}\right)$ if
and only if $z_s$ is  a solution of the
linear Hamiltonian boundary value problem
\begin{equation}\label{eq:ham-sys-family}
\begin{cases}
 \dot z_s= J\, B_s(t)\, z_s(t),  \qquad t \in [0,T]\\
 \big(z_s(0), z_s(T)\big) \in L \cap \Graph\big(\gamma_s(T)\big)
\end{cases}
\end{equation}
where $\gamma_s$ is the fundamental solution of the Equation in
\eqref{eq:ham-sys-family}.
\begin{prop}({\bf A Morse-type Index Theorem\/}) Under the
above Notation we have
\[
 \iMas(z)= \iRel(z).
\]
\end{prop}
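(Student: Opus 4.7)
The strategy is to compare the spectral flow defining $\iRel(z)$ with the Maslov index of a companion Lagrangian path through their common crossing-form expressions, and then to identify that auxiliary Maslov index with $\iMas(z)$ via a two-parameter homotopy on a square. Let $s\mapsto B_s$ be the continuous path of symmetric multiplication operators with $B_0=0$ and $B_1=B$ entering the definition of $\iRel(z)$, set $\mathcal{A}(s)\=-J\tfrac{d}{dt}-B_s$ on $\mathcal{D}(T,L)$, and let $\gamma_s:[0,T]\to\Sp(2n,\R)$ denote the fundamental solution of $\dot\gamma_s=JB_s\gamma_s$, $\gamma_s(0)=\Id$, so that $\gamma_0\equiv\Id$ and $\gamma_1=\gamma$. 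A direct check identifies
\[
\ker\mathcal{A}(s)\;\cong\;L\cap\Graph\bigl(\gamma_s(T)\bigr),\qquad u\longmapsto\bigl(u(0),u(T)\bigr),
\]
so the crossings of the operator path coincide with those of the Lagrangian path $s\mapsto\Graph(\gamma_s(T))$ relative to $L$. By the perturbation principle recalled in Appendix \ref{subsec:spectral-flow}, one may replace $\mathcal{A}(s)$ by $\mathcal{A}(s)+\delta\,\Id$ for generic small $\delta$ to force only regular crossings, leaving $\spfl(\mathcal{A};[0,1])$ and, by fixed-endpoint homotopy invariance of the CLM index, the Maslov index of the companion path unchanged.

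The main step is a crossing-form matching at each regular crossing $s_*\in(0,1)$. On the operator side, Definition \ref{def:crossing-point} gives
\[
\Gamma(\mathcal{A},s_*)[u]\;=\;-\int_0^T\langle\dot{B}_{s_*}(t)\,u(t),u(t)\rangle\,dt,\qquad u\in\ker\mathcal{A}(s_*).
\]
On the Lagrangian side, differentiating the variational equation yields $\partial_s\gamma_s(T)\big|_{s=s_*}=\gamma_{s_*}(T)\int_0^T\gamma_{s_*}^{-1}(\tau)\,J\dot{B}_{s_*}(\tau)\,\gamma_{s_*}(\tau)\,d\tau$; plugging this into the quadratic form \eqref{eq:laq} at the vector $(u(0),\gamma_{s_*}(T)u(0))\in L\cap\Graph(\gamma_{s_*}(T))$, and using the symplectic identity $\trasp{\gamma_{s_*}}J\gamma_{s_*}=J$, reduces the Lagrangian crossing form to the same integral, up to a sign fixed by the CLM conventions of Definition \ref{def:Maslov-index}. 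This crossing-form identification is the delicate point of the argument: it is local but requires careful book-keeping of Lagrangian frames and of the sign conventions attached to the product symplectic structure on $\R^{2n}\oplus\R^{2n}$.

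Once the crossing forms are matched, summing signatures and comparing Proposition \ref{thm:spectral-flow-formula} with the $\iCLM$ crossing-form formula of Appendix \ref{subsec:Maslov} yields
\[
\spfl(\mathcal{A};[0,1])\;=\;-\iCLM\bigl(L,\Graph(\gamma_s(T));\,s\in[0,1]\bigr).
\]
To complete the argument I would invoke the homotopy invariance (Property II) of $\iCLM$ on the continuous two-parameter family $(s,t)\mapsto\Graph(\gamma_s(t))$ defined on the square $[0,1]_s\times[0,T]_t$: the two boundary paths from $(0,0)$ to $(1,T)$, one via $(1,0)$ (producing $\iMas(z)$ after a constant segment) and the other via $(0,T)$ (producing $\iCLM(L,\Graph(\gamma_s(T));s\in[0,1])$ after a constant segment), are homotopic rel endpoints inside the square, while the edges $\{t=0\}$ and $\{s=0\}$ are constant at $\Graph(\Id)$ because $\gamma_s(0)=\Id$ for all $s$ and $\gamma_0\equiv\Id$. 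Path additivity then gives
\[
\iCLM\bigl(L,\Graph(\gamma(t));\,t\in[0,T]\bigr)\;=\;\iCLM\bigl(L,\Graph(\gamma_s(T));\,s\in[0,1]\bigr),
\]
that is $\iMas(z)=\iRel(z)$.
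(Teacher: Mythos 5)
The paper does not prove this proposition itself; it simply refers to \cite[Theorem 2.5]{HS09} for the proof. Your proposal is therefore a genuine attempt at supplying an argument that the paper elides, and it does follow the standard route that such an identity is usually established by (and, judging from how the paper uses \cite[Lemma 2.3, Lemma 2.4]{HS09} in the proof of Proposition \ref{thm:lemma4}, essentially the route taken in the cited source): identify the kernel of the operator path with the intersection of $L$ with the moving graph, match crossing forms, and then use homotopy invariance on the square $[0,1]_s\times[0,T]_t$ to convert the $s$-parametrised graph path into the $t$-parametrised Maslov path.

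The structure is sound, and the piece you flagged as ``the delicate point'' does in fact close up. Concretely: the operator crossing form at $s_*$ is $-\int_0^T\langle\dot B_{s_*}u,u\rangle\,dt$, while working through the Lagrangian crossing form with the vertical Lagrangian $\{0\}\oplus\R^{2n}$ as complement, and using $\dot\gamma_{s_*}(T)=\gamma_{s_*}(T)\int_0^T\gamma_{s_*}^{-1}J\dot B_{s_*}\gamma_{s_*}\,d\tau$ together with $\trasp{\gamma}J\gamma=J$ and $\gamma^{-1}J=J\trasp{\gamma}$, yields $+\int_0^T\langle\dot B_{s_*}u,u\rangle\,dt$. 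So the two crossing forms have \emph{opposite} signs. This, combined with the asymmetric endpoint conventions ($-\iMor$ at the left and $+\coindex$ at the right for $\spfl$ in Proposition \ref{thm:spectral-flow-formula}; $+\coindex$ at the left and $-\iMor$ at the right for $\iCLM$ in the Appendix), is precisely what produces $\spfl(\mathcal A;[0,1])=-\iCLM(L,\Graph(\gamma_s(T));s\in[0,1])$, as you assert. The kernel identification $u\mapsto(u(0),u(T))$ and the square-boundary argument, which needs only that the $\{t=0\}$ and $\{s=0\}$ edges are constant at $\Graph(\Id)$ (and so contribute $\iCLM=0$), are both correct. The only gaps are in exposition rather than substance: you should actually carry out the frame computation that pins the sign rather than leaving it as ``up to a sign'', and you should note explicitly that after the generic $\delta$-perturbation both endpoints are non-crossings, so the boundary terms in the two crossing-form formulas are vacuous and only the interior signatures matter. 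With those two additions the argument is complete.
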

\begin{proof}
For the proof of this result we refer the interested reader to \cite[Theorem
2.5]{HS09}.
\end{proof}
\begin{rem}
 It is worth noticing that if $L=L_1 \oplus L_2 \in \Lagr(\R^{2m}\oplus \R^{2m},
-\omega \oplus \omega)$,
 where $L_i \in \Lagr(\R^{2m},\omega)$, for $i=1,2$, then we  have
 $\iCLM\big(L_1 \oplus L_2, \Graph(\gamma); [0,T]\big)= \iCLM\big(L_2,
\ell_1;[0,T]\big)$ where $\ell_1(\cdot)\=
 \gamma(\cdot)\, L_1$.
\end{rem}

Bott-type iteration formula for the Maslov-type index is a very powerful tool to study the stability problem, such as \cite{Bot56}, \cite{BTZ82} and
\cite{Lon99}. In \cite{HS09}, the authors generalized this iteration formula to the case with group action on the orbit. Here we just give the cyclic symmetry
case.

Let $Q$ be a fixed symplectic orthogonal matrix, $E$ be the function space
\begin{equation}
E=\{z\in W^{1,2}(\R/T\Z,\R^{2n})\mid z(t)=Qz(t+T)\}
 \end{equation}
 and $g$ be the
generator of $Z_m$, then the $Z_m$-group action is defined by $gz(t)=Sz(t+\frac{T}{m})$ for any $z\in E$, where $S$ is an orthogonal symplectic matrix
such that $JS=SJ$ and $S^m=Q$, we have
\begin{prop}\cite[Thm 1.1]{HS09}\label{Thm:I-F}
Let $z$ be a solution of the system \eqref{eq:ham-sys-1} and $\gamma(t)$ be the fundamental solution, then for the cyclic symmetry, we have
\begin{equation}\label{eq:I-F-C}
\iCLM(\Gr(\trasp{Q}),\Gr(\gamma(t)),t\in[0,T])=\sum_{i=1}^m\iCLM\big(\Gr(\exp(\dfrac{i}{m}2\pi\sqrt{-1})\trasp{S}),\Gr(\gamma(t)),t\in[0,T/m]\big).
\end{equation}
\end{prop}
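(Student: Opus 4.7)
The plan is to translate both sides of \eqref{eq:I-F-C} into spectral flows, decompose the spectral flow on the left using the $Z_m$-action afforded by the cyclic symmetry, and then convert each piece back into a CLM index over the shorter interval.

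First, arguing as in Proposition \ref{thm:lemma4} (which itself rests on \cite[Lemmas 2.3--2.4]{HS09}), I would write
\begin{equation}
\iCLM\big(\Gr(\trasp Q),\Gr(\gamma(t));t\in[0,T]\big)=-\spfl(\mathcal A_s;s\in[0,1])+c_Q,
\end{equation}
where $\mathcal A_s\=-J\tfrac{d}{dt}-sB(t)$, with $B(t)=D^2H(t,z(t))$, acts on the Sobolev space
\begin{equation}
E(T,Q)\=\Set{z\in W^{1,2}([0,T],\C^{2n})|z(0)=Qz(T)},
\end{equation}
and $c_Q$ is the CLM contribution of a fixed reference symplectic path joining $\Id$ to $\trasp Q$. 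The analogous identity holds for each summand on the right, on the twisted space $E(T/m,\omega_i\trasp S)$ with reference contribution $c_{\omega_i,S}$.

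Next I would exploit the cyclic symmetry. The hypotheses $S^m=Q$, $JS=SJ$ and the $Z_m$-equivariance of $H$ that produces the orbit $z$ force $B$ to intertwine the time-shift by $T/m$ with conjugation by $S$, so the unitary operator $(Uz)(t)\=\trasp S\,z(t+T/m)$, extended past $T$ via $z(0)=Qz(T)$, satisfies $U^m=\Id$ and commutes with every $\mathcal A_s$. Hence $E(T,Q)$ decomposes orthogonally as $\bigoplus_{i=1}^m E_i$ with $E_i=\{z\mid Uz=\omega_i z\}$ and $\omega_i=\exp(2\pi\sqrt{-1}\,i/m)$, and by additivity
\begin{equation}
\spfl(\mathcal A_s;s\in[0,1])=\sum_{i=1}^m\spfl(\mathcal A_s|_{E_i};s\in[0,1]).
\end{equation}
A direct verification shows that the restriction map $z\mapsto z|_{[0,T/m]}$ identifies $E_i$ isometrically with $E(T/m,\omega_i\trasp S)$ and intertwines the restricted operator with the analogous first-order operator on the shorter interval. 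Converting each spectral flow back to a CLM index via the dictionary of the first step yields
\begin{equation}
\spfl(\mathcal A_s|_{E_i};s\in[0,1])=-\iCLM\big(\Gr(\omega_i\trasp S),\Gr(\gamma(t));t\in[0,T/m]\big)+c_{\omega_i,S},
\end{equation}
and summing over $i$ gives \eqref{eq:I-F-C}, provided the reference terms satisfy $c_Q=\sum_i c_{\omega_i,S}$; this last equality one obtains by choosing the reference path to itself respect the $Z_m$-symmetry (for instance, the constant path), reducing the matching to an elementary algebraic identity.

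The main obstacle is the bookkeeping in the cyclic-group decomposition: one has to verify that the $\omega_i$-isotypical component of $E(T,Q)$ is faithfully represented, under restriction to $[0,T/m]$, by the twisted boundary condition $\zeta(0)=\omega_i\trasp S\,\zeta(T/m)$ with precisely that character (and not by an off-by-one or conjugate variant), and that the homotopy $sB(t)$ can be replaced, without changing the spectral flow, by one that is $Z_m$-equivariant throughout. Once these two compatibilities are settled, additivity of the spectral flow under invariant orthogonal splittings does all the remaining work.
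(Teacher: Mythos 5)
The paper does not give a proof of this proposition; it is quoted verbatim from \cite[Theorem 1.1]{HS09} and offered as a known result, so there is no in-paper argument to compare against. Judged on its own, your plan is the standard and, as far as one can tell from the structure of \cite{HS09}, the intended one: translate the Cappell--Lee--Miller index into a spectral flow, observe that the hypotheses $S^m=Q$, $JS=SJ$ and the $Z_m$-symmetry of the orbit (which forces $B(t+T/m)=SB(t)\trasp{S}$) make the unitary $(Uz)(t)=\trasp{S}z(t+T/m)$ commute with the whole family $\mathcal A_s$, decompose $E(T,Q)$ into the $m$ isotypical summands of $U$, use additivity of the spectral flow over invariant orthogonal splittings, identify each summand under restriction to $[0,T/m]$ with the $\omega_i$-twisted space, and translate back. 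You have correctly isolated the two genuine verifications: the equivariance of the coefficients, and the character bookkeeping in the identification of isotypical components with twisted boundary conditions.

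Two small corrections of emphasis. First, the appropriate dictionary between $\iCLM$ and $\spfl$ here is not Proposition \ref{thm:lemma4} (which relates a spectral flow \emph{in the parameter $s$} to the Maslov index of the path $s\mapsto\omega A_d\Psi_{1,s}(T)$), but the Morse-type index theorem $\iMas(z)=\iRel(z)$ quoted just above in the appendix (\cite[Theorem 2.5]{HS09}): this equates $\iCLM(L,\Graph(\gamma);t\in[0,T])$ directly with $-\spfl(\mathcal A_s;s\in[0,1])$, with no additive constant. Consequently your $c_Q$ and $c_{\omega_i,S}$ are all zero and the compatibility $c_Q=\sum_i c_{\omega_i,S}$ needs no separate argument. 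Second, since the sum in \eqref{eq:I-F-C} ranges over all $m$-th roots of unity, the potential off-by-a-conjugate ambiguity you flag in matching $E_i$ with the $\omega_i\trasp{S}$-twisted (versus $\bar\omega_i\trasp{S}$-twisted) space does not affect the final identity, though it does need to be resolved to make each individual summand correct. With these adjustments the proposal is a sound sketch of the expected argument, left incomplete only in the routine verifications you yourself enumerate.
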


First order Hamiltonian Systems encountered in the Applications come in general
from second order
Lagrangian Systems. It is well-known that in this case there is a direct
relation between the Maslov index and
the (classical) Morse index. For, let $L \in \mathscr C^2\big([0,T]\times
\R^{2n}, \R\big)$ be a {\em Lagrangian
function\/} and let $\mathbb S_L: W^{1,2}([0,T]; \R^n)\to \R$ be the {\em
Lagrangian action functional\/} defined as
\begin{equation}\label{eq:lagr-action}
 \mathbb S_L(x)\=\int_0^T L\big(t, x(t), \dot x(t)\big)\, dt.
\end{equation}
We assume that the function $L$ satisfying the Legendre convexity
condition:
\begin{equation}
 \Big \langle D_{vv}^2\,L(t,q,v)\, w, w \Big \rangle>0  \textrm{ for } t \in
[0,T], \
 w \in \R^n , \ (q,v) \in \R^n \times \R^n.
\end{equation}
Let $S$ be an orthogonal matrix, a solution of the
Euler-Lagrange Equation with the
boundary condition $\big(x(0), x(T)\big) \in \Gr(\trasp{S})$ is  a critical point
of $\mathbb S_L$ in the
space
\begin{equation}
 E_S\=\Set{x\in W^{1,2}([0,T], \R^n)| \big(x(0), x(T)\big) \in
\Gr(\trasp{S})}.
\end{equation}
For a critical point $x$ of $\mathbb S_L$, its Morse index is denoted by $m^-(x)$.

By using the Legendre transformation $p= D_v L(t, q,v)$ and setting $H(t,p,q)=
\langle p, v\rangle -
L(t,q, v)$ the Euler-Lagrange Equation can be converted into
the following Hamiltonian System
\begin{equation}\label{eq:ham-indotto}
 \dot z(t)= J\, \nabla H\big(t, z(t)\big)
\end{equation}
with the following Lagrangian  boundary condition
\[
 \big(z(0),z(T)\big)\in \Gr(\trasp{S}_d),
\]
where $S_d=\begin{bmatrix}S&0\\0&S\end{bmatrix}\in\Sp(2n)$.

Then we have the following useful {\em Morse-type Index
Theorem.\/}
\begin{prop}\cite[Thm 1.2]{HS09} \label{Thm:R-O-M-M}
Let $x$ be a critical point of
$\mathbb S_L$ and we assume that the Legendre convexity condition holds. Then
the
Morse index of $x$ is finite and the following holds:
\[
m^-(x) + \nu(S)= \iCLM\big(\Gr(\trasp{S}_d),\Gr(\Phi(t))\big)
\]\label{eq:M-M}
where $\nu(S)=\dim\ker(S-I_n)$ and $\Phi(t)$ is the fundamental solution of the corresponding Hamiltonian system.
\end{prop}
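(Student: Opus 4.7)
The strategy is to combine the Legendre transformation with the Morse-type Index Theorem for Hamiltonian Systems stated immediately above (which gives $\iMas(z) = -\spfl(A)$), and then to match the resulting Hamiltonian spectral flow to the Lagrangian Morse index via a parallel, essentially positive Lagrangian homotopy.

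First, I would establish that $m^-(x)$ is finite. The second variation of $\mathbb S_L$ at $x$ is a bounded symmetric bilinear form on $E_S$ whose principal part is $\int_0^T \langle P(t)\dot y,\dot y\rangle\, dt$ with $P(t) = D_{vv}^2 L(t,x,\dot x)$ positive-definite by Legendre convexity. Hence the representative selfadjoint operator $\mathcal L$ on $L^2([0,T],\R^n)$ is Fredholm with essentially positive spectrum, and $\iindex{\mathcal L} = m^-(x) < \infty$. Next, the Legendre transformation $p(t) = D_v L(t,x(t),\dot x(t))$ lifts $x$ to a solution $z = (x,p)$ of the Hamiltonian system \eqref{eq:ham-indotto} subject to $(z(0),z(T)) \in \Gr(\trasp{S}_d)$, and its fundamental solution $\Phi$ is intertwined with the Jacobi equation of $x$ by the fiberwise symplectic isomorphism induced by $P(t)$. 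Applying the Hamiltonian Morse-type Index Theorem to $z$ with Lagrangian subspace $L = \Gr(\trasp{S}_d)$ yields
\begin{equation*}
\iCLM\bigl(\Gr(\trasp{S}_d),\Gr(\Phi(t));[0,T]\bigr) = -\spfl(A;[0,1]),
\end{equation*}
where $A(s) = -J\tfrac{d}{dt} - s B(t)$ on the fixed domain $\mathcal D(T,\Gr(\trasp{S}_d))$ with $B(t) = D^2 H(t,z(t))$.

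The remaining task is to prove that $-\spfl(A;[0,1]) = m^-(x) + \nu(S)$. Since $A(0)$ and $A(1)$ are both strongly indefinite, formula \eqref{eq:miserve} does not apply directly; instead, I would parallel the path $A$ by the second-order Lagrangian homotopy $\mathcal L_s$ on $E_S$ obtained by scaling the Legendre-transformed second variation by $s$. By Legendre convexity, $\mathcal L_s$ is essentially positive for every $s$, with trivial Morse index at $s = 0$ and Morse index $m^-(x)$ at $s = 1$, so $\spfl(\mathcal L_s;[0,1]) = -m^-(x)$ by \eqref{eq:miserve}. A crossing-form comparison through the Legendre correspondence then matches the interior contributions: at an interior crossing $s_*\in(0,1)$, $\ker A(s_*)$ and $\ker \mathcal L_{s_*}$ are canonically isomorphic, and the two crossing forms coincide under this isomorphism (by Proposition \ref{thm:spectral-flow-formula} applied to both sides).

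The hard part is pinning down the endpoint contribution at $s = 0$. On $\mathcal D(T,\Gr(\trasp{S}_d))$ the kernel of $A(0) = -J\tfrac{d}{dt}$ consists of constant loops $z_0 \in \ker(\trasp{S}_d - \Id)$, a $2\nu(S)$-dimensional space, whereas $\ker\mathcal L_0$ has dimension $\nu(S)$; the discrepancy is due to the momentum degrees of freedom that are integrated out by the Legendre transform. Using Proposition \ref{thm:spectral-flow-formula}, the endpoint correction is governed by the crossing form $\Gamma(A,0)[z_0] = -\int_0^T \langle B(t) z_0, z_0\rangle\, dt$ on this constant kernel. I would evaluate this signature by further deforming $B$ (using homotopy invariance of $\spfl$) to a block-diagonal reference Hamiltonian with the same Legendre-convexity profile, on which an explicit computation shows that the difference of endpoint contributions between $A$ and $\mathcal L$ equals exactly $-\nu(S)$. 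Substituting back gives $\spfl(A;[0,1]) = \spfl(\mathcal L_s;[0,1]) - \nu(S) = -m^-(x) - \nu(S)$, which completes the proof. I expect this last crossing-form computation on the extra $\nu(S)$-dimensional momentum component to be the single delicate point of the argument.
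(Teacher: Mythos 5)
The paper does not actually prove Proposition \ref{Thm:R-O-M-M}: the proof block is a pointer to \cite[Theorem 3.4]{HS09}, so there is no in-paper argument to compare against. Evaluating your plan on its own merits: you correctly identify the two ingredients (the Legendre transformation and the Hamiltonian Morse-type index theorem above), and the finiteness of $m^-(x)$ and the intended endpoint bookkeeping at $s=0$ are described correctly. However, the central step --- matching the interior crossing forms of $A(s)=-J\tfrac{d}{dt}-sB(t)$ with those of a second-order path $\mathcal L_s$ on $E_S$ --- does not hold as stated. For a quadratic Lagrangian $\tfrac12\langle P\dot y,\dot y\rangle+\langle Q\dot y,y\rangle+\tfrac12\langle Ry,y\rangle$ with $P=D_{vv}^2L$, the Legendre-transformed Hessian is
\begin{equation}
B=\begin{bmatrix}P^{-1}&-P^{-1}\trasp{Q}\\-QP^{-1}&QP^{-1}\trasp{Q}-R\end{bmatrix},
\end{equation}
and this dependence is nonlinear: scaling $(Q,R)\mapsto(sQ,sR)$ while keeping $P$ fixed produces a Hamiltonian path with an $s^2$ block, not $sB$; conversely, the Legendre dual of $sB$ has kinetic part $s^{-1}P$, which degenerates as $s\to 0^+$. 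Thus $A(s)$ and $\mathcal L_s$ are not Legendre-dual at intermediate $s$, so there is no canonical isomorphism $\ker A(s_*)\cong\ker\mathcal L_{s_*}$ with matching crossing forms, and your claim that "the two crossing forms coincide" is unjustified.

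The argument can be repaired: define $\mathcal L_s$ on the Lagrangian side (fixing $P$, scaling the rest), let $\tilde A(s)$ denote its genuine Legendre dual (a nonlinear-in-$s$ path from $A_0=-J\tfrac{d}{dt}$ to $A_0-B$), note that $\tilde A$ and $A$ are both fixed-domain compact-perturbation paths with the same endpoints and hence the same spectral flow, and then compare $\tilde A(s)$ with $\mathcal L_s$ via crossing forms. Even after this correction, the endpoint analysis at $s=0$ remains to be carried out in detail --- $\dim\ker A(0)=2\nu(S)$ versus $\dim\ker\mathcal L_0=\nu(S)$, with the momentum degrees of freedom contributing the $\nu(S)$ term through Proposition \ref{thm:spectral-flow-formula} --- and you explicitly defer this computation. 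As submitted, the proof has a genuine gap at the crossing-form identification and an acknowledged gap at the endpoint, and so it does not establish the stated formula.
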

\begin{proof}
 For the proof of this result we refer the interested reader to \cite[Theorem
3.4]{HS09}. This concludes the proof.
\end{proof}
\begin{rem}\label{Rem:C-C}
The equation in Proposition \ref{Thm:R-O-M-M} also holds in the complex case if we assume $S$ is unitary on $\C^n$. For further details we refer the interested reader
to \cite[Remark 3.6]{HS09}.
\end{rem}

We conclude this Section with the following proposition from \cite[Cor.2.1]{LZ00b}.
\begin{prop}\label{prop:L-C}
For any symplectic path $\Phi$ starting from $I_{2n}$, we have
\begin{equation}
\iota_1(\Phi)+n=\iCLM\big(\Delta,\Gr(\Phi(t))\big),
\end{equation}
and
\begin{equation}
\iota_\omega(\Phi)=\iCLM\big(\Gr(\omega),\Gr(\Phi(t))\big),\qquad \forall\,  \omega\in\U\setminus\{1\},
\end{equation}
where $\Delta=\Gr(I_{2n})$ and $\Gr(\omega)=\Gr(\omega I_{2n})$.
\end{prop}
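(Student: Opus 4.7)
The plan is to compare both integer-valued invariants $\iota_\omega(\Phi)$ and $\iCLM\big(\Gr(\omega\Id),\Gr(\Phi(\cdot))\big)$ by recognizing them as intersection numbers with a common codimension-one cycle and then to compute the normalization constant on a reference path. First I would exploit the smooth embedding $\Sp(2n,\R)\hookrightarrow\Lagr$ given by $M\mapsto\Gr(M)$, where $\Lagr$ denotes the Lagrangian Grassmannian of $(\R^{2n}\oplus\R^{2n},-\omega\oplus\omega)$. Under this map, Long's singular variety $\Sp(2n,\R)_\omega^0=\{M:\det(M-\omega\Id)=0\}$ corresponds exactly to the locus of graphs $\Gr(M)$ failing transversality with $\Gr(\omega\Id)$, i.e., the top stratum of the Maslov cycle $\Sigma(\Gr(\omega\Id))$. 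A short verification shows that the transverse co-orientation in both Definition \ref{def:Maslov-index-ok} and Definition \ref{def:Maslov-index} is determined by the same velocity $\tfrac{d}{dt}\big|_{t=0}Me^{tJ}$ on the symplectic side, which under the graph embedding corresponds to the direction $\tfrac{d}{dt}\big|_{t=0}\exp(tJ)\Gr(M)$ used to co-orient the Maslov cycle.

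Next I would reduce to paths with only regular crossings, which is possible by a generic perturbation and the homotopy invariance of both indices with fixed endpoints. At a regular crossing $t_0$, both invariants contribute the same signed integer, namely the signature of a quadratic (or Hermitian) crossing form on $\ker(\Phi(t_0)-\omega\Id)$. On the CLM side this form, computed in local coordinates around $\Gr(\Phi(t_0))$, is essentially $v\mapsto \omega(v,\dot\Phi(t_0)\Phi(t_0)^{-1}v)$ restricted to the intersection; on the Long side the same signature appears as the local intersection multiplicity with $\Sp(2n,\R)_\omega^0$ after the $e^{-\varepsilon J}$-perturbation. Thus the two invariants differ at most by a constant depending only on the initial/terminal configuration of $\Phi$, not on the interior crossings.

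Finally I would pin down this constant. When $\omega\neq 1$, the point $\Phi(0)=\Id$ satisfies $\det(\Id-\omega\Id)\neq 0$, so $\Gr(\Id)$ is transverse to $\Gr(\omega\Id)$ and neither invariant picks up any boundary contribution; evaluating both sides on the constant path $\Phi\equiv\Id$ gives $0=0$, so the constant vanishes and the second identity follows. When $\omega=1$, however, $\Gr(\Phi(0))=\Delta$ already lies in the deepest stratum of $\Sigma(\Delta)$, so the $\exp(-\varepsilon J)$-perturbation used in the CLM definition produces an initial boundary contribution which Long's definition absorbs by concatenation with the auxiliary path $\eta_n$ connecting $M_n^+$ to $\Id_{2n}$. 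A direct computation of $\iCLM\big(\Delta,\Gr(\eta_n(\cdot))\big)$, using the explicit diagonal expression of $\eta_n$ given before Definition \ref{def:Maslov-index-ok}, shows that each of the $n$ $2\times 2$ blocks contributes $-1$, yielding a total of $-n$ and hence the $+n$ normalization in the first identity.

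The main obstacle will be the careful bookkeeping of signs and orientations, especially for complex $\omega\in\U\setminus\{\pm 1\}$, where the crossing forms must be interpreted as real parts of Hermitian forms on the complexification and the identification between Long's singular cycle (defined through the real function $D_\omega(M)=(-1)^{n-1}\bar\omega^n\det(M-\omega\Id)$) and the Maslov cycle on the Lagrangian Grassmannian requires reconciling the factor $(-1)^{n-1}\bar\omega^n$; fortunately this factor is a positive smooth function transverse to the singular set and so does not affect the signed intersection count.
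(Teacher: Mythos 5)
The paper itself gives no proof of Proposition~\ref{prop:L-C}: it is quoted from Long--Zhu \cite[Cor.~2.1]{LZ00b}, and the text preceding it says so explicitly. So there is no internal argument to compare against; what you have produced is a reconstruction of the Long--Zhu argument. In outline it is sound, and the key computation — that $\iCLM\big(\Delta,\Gr(\eta_n(\cdot))\big)=-n$, coming from a single terminal crossing of each $2\times 2$ block whose crossing form is equivalent to $u_1u_2$, contributing $-\iMor=-1$ — is correct and is precisely where the shift $+n$ at $\omega=1$ comes from.

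Two points, however, are stated too loosely. First, the perturbation directions are not literally the same on the two sides, and you cannot simply assert that the co-orientations coincide. Long translates the whole symplectic path by $e^{-\varepsilon J}\Phi$, whereas the CLM perturbation on $\bigl(\R^{2n}\oplus\R^{2n},-\omega\oplus\omega\bigr)$ is $\exp(-\varepsilon\widetilde J)\,\Gr(\Phi)$ with $\widetilde J=(-J)\oplus J$, and a short computation shows $\exp(-\varepsilon\widetilde J)\,\Gr(\Phi)=\Gr\bigl(e^{-\varepsilon J}\Phi e^{-\varepsilon J}\bigr)$. The latter is conjugate to $e^{-2\varepsilon J}\Phi$, so the two perturbations move the path off the singular cycle in the same direction but with different rates; you need to say this, since it is the content of the co-orientation match and it is not a tautology. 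Second, and more importantly, the reduction to ``the two integers differ by a constant, evaluated on $\Phi\equiv\Id$'' silently assumes that the \emph{terminal} crossing contributions match as well as the interior ones. CLM puts the terminal contribution in an explicit $-\iMor[\Gamma(\cdot,\cdot,b)]$ boundary term, while Long encodes it through the $e^{-\varepsilon J}$-shift applied to the whole concatenated path; reconciling these is exactly the subtle part, and without it the ``constant'' could a priori depend on $\Phi(\tau)$. Finally, the last sentence about the factor $(-1)^{n-1}\bar\omega^{n}$ being ``a positive smooth function'' is wrong as stated: it is a \emph{constant}, it is not positive in general (take $n$ odd and $\omega=-1$), and the reason it does not affect the signed count is different — the co-orientation of $\Sp(2n,\R)^0_\omega$ is defined intrinsically via the direction $\frac{d}{dt}\big|_{t=0}Me^{tJ}$, independently of the overall sign normalization put into $D_\omega$.
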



\vspace{1cm}
	\noindent
	\textsc{Prof. Xijun Hu}\\
	Department of Mathematics\\
	Shandong University\\
	Jinan, Shandong, 250100 \\
	The People's Republic of China \\
	China\\
	E-mail: \email{xjhu@sdu.edu.cn}

\vspace{1cm}
\noindent
\textsc{Prof. Alessandro Portaluri}\\
DISAFA\\
Università degli Studi di Torino\\
Largo Paolo Braccini 2 \\
10095, Grugliasco, Torino\\
Italy\\
Website: \url{aportaluri.wordpress.com}\\
E-mail: \email{alessandro.portaluri@unito.it}

\vspace{1cm}
\noindent
\textsc{Dr. Ran Yang}\\
School of Science\\
East China  University of Technology\\
Nanchang, Jiangxi, 330013\\
The People's Republic of China \\
China\\
E-mail: \email{yangran201311260@mail.sdu.edu.cn}

\vspace{1cm}
\noindent
COMPAT-ERC Website: \url{https://compaterc.wordpress.com/}\\
COMPAT-ERC Webmaster \& Webdesigner: Arch.  Annalisa Piccolo

\end{document}